\newcommand{\cz}{Calder\'{o}n--Zygmund\ }
\newcommand{\shto}{\raisebox{.3ex}{$\scriptscriptstyle\rightarrow$}\!}
\newcommand{\R}{{\mathbb  R}}
\newcommand{\D}{{\mathbb  D}}
\newcommand{\T}{\mathbb{T}}
\newcommand{\Z}{{\mathbb  Z}}
\newcommand{\N}{{\mathbb  N}}
\newcommand{\C}{{\mathbb  C}}
\newcommand{\fD}{{\mathfrak  D}}
\newcommand{\OZ}{{\mathbf{O}}}
\newcommand{\dd}{{d}}
\newcommand{\ID}{{\mathbf{1}}}
\newcommand{\OID}{{\mathbf{I}}}
\newcommand{\fdot}{\,\cdot\,}
\newcommand{\wt}{\widetilde}
\newcommand{\bI}{\mathbf{I}}
\newcommand{\1}{\mathbf{1}}
\newcommand{\La}{\langle}
\newcommand{\Ra}{\rangle}
\newcommand{\cH}{\mathcal{H}}
\newcommand{\cK}{\mathcal{K}}
\newcommand{\cD}{\mathcal{D}}
\newcommand{\cM}{\mathcal{M}}
\newcommand{\cV}{\mathcal{V}}
\newcommand{\K}{\mathcal{K}}
\newcommand{\I}{\mathbf{I}}
\newcommand{\cU}{\mathcal{U}}
\newcommand{\te}{\theta}
\newcommand{\f}{\varphi}
\newcommand{\e}{\varepsilon}
\DeclareMathOperator{\clos}{clos}
\DeclareMathOperator{\Ker}{Ker}
\DeclareMathOperator{\Ran}{Ran}
\DeclareMathOperator{\spa}{span}
\DeclareMathOperator{\im}{Im}
\DeclareMathOperator{\supp}{supp}
\DeclareMathOperator{\var}{var}
\newcommand{\la}{\lambda}
\newcommand{\ci}[1]{_{ {}_{\scriptstyle #1}}}
\newcommand{\ti}[1]{_{\scriptstyle \text{\rm #1}}}
\newcommand{\ut}[1]{^{\scriptstyle \text{\rm #1}}}
\chardef\mathlig@atcode\count255
\def\actively#1#2{\begingroup\uccode`\~=`#2\relax\uppercase{\endgroup#1~}}
\def\mathlig@gobble{\afterassignment\mathlig@next@cmd\let\mathlig@next= }
\def\mathlig@delim{\mathlig@delim}
\def\mathlig@defcs#1{\expandafter\def\csname#1\endcsname}
\def\mathlig@let@cs#1#2{\expandafter\let\expandafter#1\csname#2\endcsname}
\def\mathlig@appendcs#1#2{\expandafter\edef\csname#1\endcsname{\csname#1\endcsname#2}}
\def\mathlig#1#2{\mathlig@checklig#1\mathlig@end\mathlig@defcs{mathlig@back@#1}{#2}\ignorespaces}
\def\mathlig@checklig#1#2\mathlig@end{%
 \expandafter\ifx\csname mathlig@forw@#1\endcsname\relax
 \expandafter\mathchardef\csname mathlig@back@#1\endcsname=\mathcode`#1%
 \mathcode`#1"8000\actively\def#1{\csname mathlig@look@#1\endcsname}%
 \mathlig@dolig#1\mathlig@delim
\fi
\mathlig@checksuffix#1#2\mathlig@end
}
\def\mathlig@checksuffix#1#2\mathlig@end{%
\ifx\mathlig@delim#2\mathlig@delim\relax\else\mathlig@checksuffix@{#1}#2\mathlig@end\fi
}
\def\mathlig@checksuffix@#1#2#3\mathlig@end{%
\expandafter\ifx\csname mathlig@forw@#1#2\endcsname\relax\mathlig@dosuffix{#1}{#2}\fi
\mathlig@checksuffix{#1#2}#3\mathlig@end
}
\def\mathlig@dosuffix#1#2{%
\mathlig@appendcs{mathlig@toks@#1}{#2}%
\mathlig@dolig{#1}{#2}\mathlig@delim
}
\def\mathlig@dolig#1#2\mathlig@delim{%
 \mathlig@defcs{mathlig@look@#1#2}{%
 \mathlig@let@cs\mathlig@next{mathlig@forw@#1#2}\futurelet\mathlig@next@tok\mathlig@next}%
 \mathlig@defcs{mathlig@forw@#1#2}{%
  \mathlig@let@cs\mathlig@next{mathlig@back@#1#2}%
  \mathlig@let@cs\checker{mathlig@chck@#1#2}%
  \mathlig@let@cs\mathligtoks{mathlig@toks@#1#2}%
  \expandafter\ifx\expandafter\mathlig@delim\mathligtoks\mathlig@delim\relax\else
  \expandafter\checker\mathligtoks\mathlig@delim\fi
  \mathlig@next
 }%
 \mathlig@defcs{mathlig@toks@#1#2}{}%
 \mathlig@defcs{mathlig@chck@#1#2}##1##2\mathlig@delim{%
  \ifx\mathlig@next@tok##1%
   \mathlig@let@cs\mathlig@next@cmd{mathlig@look@#1#2##1}\let\mathlig@next\mathlig@gobble
  \fi
  \ifx\mathlig@delim##2\mathlig@delim\relax\else
   \csname mathlig@chck@#1#2\endcsname##2\mathlig@delim
  \fi
 }%
%
 \ifx\mathlig@delim#2\mathlig@delim\else
  \mathlig@defcs{mathlig@back@#1#2}{\csname mathlig@back@#1\endcsname #2}%
 \fi
}%
\mathchardef\ordinarycolon\mathcode`\:
\def\vcentcolon{\mathrel{\mathop\ordinarycolon}}
\numberwithin{equation}{section}
\theoremstyle{plain}
\newtheorem{theo}{Theorem}[section]
\newtheorem{lem}[theo]{Lemma}
\theoremstyle{definition}
\newtheorem{defn}[theo]{Definition}
\theoremstyle{remark}
\newtheorem*{ex*}{Example}
\theoremstyle{remark}
\newtheorem*{exs*}{Examples}
\theoremstyle{remark}
\newtheorem*{rem*}{Remark}
\newtheorem{rem}[theo]{Remark}
\newtheorem*{rems*}{Remarks}
\title[Singular integrals, rank one perturbations and Clark model]{Singular integrals, rank one perturbations and Clark model in general situation}
\author{Constanze~Liaw}
\address{CASPER and Department of Mathematics, Baylor University, One Bear Place \#97328,      
 Waco, TX  76798, USA}
\email{Constanze$\underline{\,\,\,}$Liaw@baylor.edu}
\urladdr{http://sites.baylor.edu/constanze$\underline{\,\,\,}$liaw/}
\author{Sergei~Treil}
 \thanks{Work of S.~Treil is supported by the National Science Foundation under the grant DMS-1301579.}
\address{Department of Mathematics, Brown University, 151 Thayer
Str./Box 1917,      
Providence, RI  02912, USA }
\email{treil@math.brown.edu}
\urladdr{http://www.math.brown.edu/\~{}treil}
\keywords{Rank one perturbations, Clark theory, singular integral operators, restricted boundedness, regularizations, normalized Cauchy transform}
\subjclass[2010]{42B20, 44A15, 47A10, 47A20, 47A55}
\begin{document}

\begin{abstract}
We start with considering rank one self-adjoint perturbations $A_\alpha = A+\alpha(\fdot,\varphi)\varphi$ with cyclic vector $\varphi\in \mathcal{H}$ on a separable Hilbert space $\mathcal H$.
The spectral representation of the perturbed operator $A_\alpha$ is realized by a (unitary) operator of a special type: the Hilbert transform in the two-weight setting, the weights being spectral measures of the operators $A$ and $A_\alpha$. 

Similar results will be presented for unitary rank one perturbations of unitary operators, leading to singular integral operators on the circle.  

This  motivates the  study of abstract singular integral operators, in particular the regularization of such operator in very general settings.  


Further, starting with contractive rank one perturbations we present the Clark theory for arbitrary spectral measures (i.e.~for arbitrary, possibly not inner characteristic functions). We present a description of the Clark operator and its adjoint in the general settings.  Singular integral operators, in particular the so-called normalized Cauchy transform again plays a prominent role.


Finally, we present a possible way to construct the Clark theory for dissipative rank one perturbations of self-adjoint operators. 

These lecture notes give an account of the mini-course delivered by the authors, which was centered around \cite{LiawTreil2009, Regularizations2013, LT}. Unpublished results are restricted to the last part of this manuscript.
\end{abstract}

\dedicatory{To the memory of Cora Sadosky}

\maketitle

\setcounter{tocdepth}{1}
\tableofcontents

\setcounter{tocdepth}{2}

\section{Introduction}

Rank one perturbations play an important role in operator theory and mathematical physics. One of the principal attractions of rank one perturbations is that for such operators almost everything can be explicitly computed, and then the advanced technique of Harmonic Analysis, like the study of fine properties of Cauchy type integrals, or advanced theory of singular integral operators can be applied.  

\subsection{Main players}
\subsubsection{Rank one perturbations}
Self-adjoint rank one perturbations occurred naturally in mathematical physics \cite{weyl}. For example, a change in the boundary condition of a limit-point half-line Schr\"odinger operator from Dirichlet to Neumann, or to mixed conditions, can be reformulated as adding a rank one perturbation (see for example \cite{SIMREV}).


The technique of  rank one perturbations was used in some results on orthogonal polynomials and Jacobi matrices, and there are some interesting applications to free probability  (see e.g.~\cite{AnshelevichAAM, AnshelevichJFA}).
 They also turned out to be useful in the investigation of certain random Hamiltonian systems called Anderson models and the longstanding Anderson localization conjecture \cite{And1958}. Many specializations of this conjecture were studied in literature and the field is still very active (see e.g.~\cite{AizMol1993, denseGdelta, FrSp, Germ, Klo2}, also see \cite{Kir95, Ban09} for a recent account of parts of the field). Rank one perturbations play a role in \cite{AbaLiawPolt, 3D, 2D, SIMWOL}. Recent studies of closely related unitary Anderson models as well as accessible explanations of the physical relevance of these models can be found, e.g.~in \cite{HamzaJoyeStolz2006, HamzaJoyeStolz2009, Joye2012, Stoiciu2007}. The additive perturbation is replaced by a multiplicative one and the dynamical localization behavior is known to be quite similar to its self-adjoint analogue.


\subsubsection{Singular integral operators}
Singular integrals is a classical and actively studied field in Harmonic Analysis, and rank one perturbations serves as a source of very interesting problems. Many results for rank one perturbations are obtained by investigating fine properties of singular integrals. 
For example,  investigation of the boundary behavior of the Cauchy transform of measures lead via the so-called Aronszajn--Krein formula, see \eqref{Ar-Kr}, to  the famous Aronszajn--Donoghue theorem  stating that singular parts of the spectral measures of the family of rank one perturbation by a cyclic vector are mutually singular.  

As for a different example, basic facts about Cauchy transform of a measure coupled with the Aronszajn--Krein formula  \eqref{Ar-Kr} give a proof of the famous Kato--Rosenblum theorem about preservation of the absolutely continuous spectrum for rank one (and automatically for finite rank) perturbations. While the proof for the trace class perturbations using the technique of wave operator is probably more elegant, the approach of singular integrals gives some helpful insights. 

A deep relation between singular integral operators and rank one perturbations is based on the fact that  a unitary  operator realizing the spectral representation of a rank one perturbation is given by a singular integral operator acting $L^2(\mu)\to L^2(\mu_\alpha)$, where $\mu_\alpha$ is the spectral measure of the perturbed operator, see \cite{LiawTreil2009}; we explain this connection in the present notes. We should mention here that the spectral measures $\mu$ and $\mu_\alpha$ can be extremely bad, without any reasonable smoothness, so the above operator gives a natural example of a two weight estimates for Cauchy type operators with extremely ``pathological'' measures.

\subsubsection{Clark measures and Clark model}
In the paper \cite{Clark} that started what is now called the ``Clark theory'' D.~Clark considered all unitary rank one perturbations of  a special contractive operator (the so-called \emph{model operator} with scalar inner characteristic function). He also described the spectral measures and the spectral representations of the perturbed unitary operators.

The spectral measures of these unitary rank one perturbations were later  called the \emph{Clark measures}. Note, that if we fix one such rank one unitary perturbation, then the other unitary rank perturbations are the rank one perturbations of the fixed one. In the original paper \cite{Clark} all the spectral measures were purely singular, but very often the term \emph{Clark measures} (or Clark family of measures) was used for spectral measures of  unitary rank one perturbations of a unitary operator, or for the spectral measures of self-adjoint rank one perturbations of a self-adjoint operator.  

Later many deep function-theoretic results about Clark measures  were proved by A.~Aleksandrov, (see \cite{Aleksandrov1} through \cite{Aleksandrov5}, or see \cite{poltsara2006} for a survey), so sometimes people refer to \emph{Aleksandrov--Clark theory}, or \emph{Aleksandrov--Clark measures}. Extremely significant contributions to the theory were made then by A.~Poltoratskii, who, in particular, proved the a.e.~existence  with respect to the singular part of the measure of the non-tangential boundary values of the so-called \emph{normalized Cauchy transform}, see \cite{NONTAN}. 

We also mention an important book \cite{SAR} by D.~Sarason where many aspects of Clark theory were treated from the point of view of function space theory. In particular, a description of the Clark operator was obtained in the case when the characteristic function $\theta$ is an extreme point of the unit ball in the Hardy space $H^\infty$. 

Within classical analysis many fruitful connections of Clark measures with holomorphic composition operators, rigid functions and the Nehari interpolation problem have been discovered and studied, see for example, \cite{poltsara2006}. Some problems in the theory of Hardy spaces, and more generally of other spaces of analytic functions are closely related to Clark theory. Thus, 
recently M.~Jury \cite{Jury} computed the asymptotic symbols of a certain class of weakly asymptotic Toeplitz operators in terms of the Aleksandrov--Clark measures which occur in the context of rank one perturbations.

\subsection{Plan of the notes} These lecture notes give an account of the mini-course delivered by the authors, which was centered around \cite{LiawTreil2009, Regularizations2013, LT}. Unpublished results are restricted to the last part of this manuscript.

\subsubsection{Self-adjoint and unitary rank one perturbations}
We begin Section \ref{s-SA} with an introduction of self-adjoint rank one perturbations. We then find a unitary operator $V_\alpha$ giving the spectral representation of the perturbed operator, see Theorem \ref{t-repr-V} below;  this operator looks like  a singular integral operator with Cauchy type kernel  $(s-t)^{-1}$, although the formula of the operator looks quite different from the classical singular integral operators of Cauchy type. 

In particular,  the so-called \emph{Rigidity Theorem}, see Theorem \ref{rigTM} below,  holds for such operators: it essentially says that if the formula from Theorem \ref{t-repr-V} gives a bounded operator with trivial kernel, then, after probably a renormalization (multiplication by a non-vanishing weight) of the measure in the target space, we get exactly the unitary operator from the perturbation theory, given by Theorem \ref{t-repr-V}. 

We then give a different representation of the operator $V_\alpha$ that looks more in line with the traditional formulas for singular integral operators. Regularizations of singular kernels, treated later in Section \ref{s-SIO}, play an important role in getting this alternative representation. 

We then present similar results for the unitary rank one perturbations of unitary operators. Everything works out similarly to the self-adjoint case; some formulas for the unitary case might not look as transparent as the ones in the self-adjoint case, but in the unitary case we avoid technical difficulties related to dealing with unbounded operators. 

\subsubsection{Regularizations of singular integral operators}
Section \ref{s-SIO}  is devoted to the theory of regularization of singular kernels, which we believe have applications far beyond the perturbation theory. We show that under  very general assumptions about a singular kernel, its so-called \emph{restricted boundedness} implies the uniform boundedness of all ``reasonable'' regularizations of the corresponding formal singular integral operator. 

The restricted boundedness of the kernel is the weakest boundedness property of the corresponding singular integral operator.  
Usually, it is assumed in the theory of singular integral operators that a singular kernel $K$ blows up on the diagonal $x=y$, so the formal integral representation $Tf(x) = \int K(x,y) f(y) d\mu(y)$ is not well defined. 

However, even if we only start out with a kernel $K$ (without assuming the we are given an operator) for bounded functions $f$ and $g$ with separated compact supports the expression 
\[
\La Tf, g\Ra = \int K(x,y) f(y) g(x) d\mu(y) d\nu(x)
\]
is well defined, and if the ``correct'' estimate $| \La Tf, g\Ra|\le C\|f\|\ci{L^p(\mu)} \|g\|\ci{L^{p'}(\nu)}$, $1/p+1/p'=1$ holds for all such pairs, we say that $K$ is $L^p(\mu)\to L^p(\nu)$ restrictedly bounded. And we show in Section \ref{s-SIO} that if the measures $\mu$ and $\nu$ do not have common atoms and the kernel $K$ is restrictedly bounded, then for any ``reasonable'' regularization $K_\e$ of the kernel the corresponding regularized operators $T_\e$ are uniformly (in $\e$) bounded. This result gives us a way to define for each restrictedly bounded kernel a corresponding singular integral operator.

%

\subsubsection{Clark model for  contractive perturbations of unitary operators}
Section \ref{s-UNITARY} is devoted to the Clark theory in full generality. We start with
unitary rank one perturbations of a unitary operator $U$ by a $*$-cyclic vector. All such perturbations can be parametrized by a scalar parameter $\gamma\in\T$; if one takes $\gamma\in\D$ the resulting operator will be a completely non-unitary (c.n.u.) contraction with defect indices $1$-$1$. For such a contraction a so-called \emph{functional model}, cf.~\cite{SzNF2010} can be constructed; in fact  functional models are the canonical way of investigating non-normal contractions. 

Thus, the perturbed operator $U_\gamma$, $\gamma\in\D$ has two unitarily equivalent representations: one in the spectral representation of $U$ and the other one in the model space for the functional model. The Clark operator  is a unitary operator intertwining these representations. In Clark's original paper \cite{Clark} this operator was constructed for the case of the operator $U$ having purely singular spectrum. In \cite{Clark} the starting point was a c.n.u. contraction with inner characteristic function, which --- after translation to our language --- means that the unitary operator $U$ (and thus all its rank one unitary perturbations $U_\gamma$) has a purely singular spectral measure. 

In the general case (general spectral measure, or equivalently, a general scalar characteristic function) our approach of starting with perturbations of unitary operators looks more natural; in particular, it allowed us to describe the Clark operator. Of course, now when we know all the formulas, it is possible to go in the opposite direction and start with a c.n.u. contraction; but using this approach without knowing the formulas in advance we would have a hard time getting the results. It could well be just our personal preference, but deducing the formulas in our setup starting from a unitary operator was a natural and a straightforward process.

The main problem with the general case of Clark theory is that for general scalar characteristic function the model is vector-valued, i.e.~the model space consists of  vector-valued functions (with values in $\C^2$). Earlier approaches based on function spaces theory, see for example \cite{SAR}, dealt with spaces of scalar-valued functions. Some parts of the Clark operator were obtained using such methods, but for the full operator one had to honestly write down a complete model space and do all the computations.

The adjoint of the Clark operator is described using singular integral operators of Cauchy type. The so-called \emph{normalized Cauchy transform} investigated by A.~Poltor\-atskii, see \cite{NONTAN}, plays a prominent role there.  The Clark operator itself then can be represented via boundary values of the analytic functions. 

In the model theory we adapt the point of view of \emph{coordinate-free model} by N.~Nikolski and V.~Vasyunin, cf.~\cite{Nik-Vas_model_MSRI_1998, Nik-Vas_FunctModels_1989}, where by picking different spectral representations of the minimal unitary dilation one gets different \emph{transcriptions} of the model. We present a ``universal'' representation formula, valid in any transcription, as well as formulas adapted to two popular transcriptions, the Sz.-Nagy--Foia\c{s} transcription and the de Branges--Rovnyak one. 

\subsubsection{Clark model for dissipative perturbations of self-adjoint operators}
The last part, Section \ref{s-SAmodel} is devoted to the Clark model for the dissipative perturbations of a self-adjoint operator. We adapt a common approach that the model space for a dissipative operator is the model space of its Cayley transform (which is a contraction), with one detail: since the original operator lives in $L^2(\R, \mu)$, we, using the standard conformal map between the upper half-plane $\C_+$ and the unit disc $\D$, move the model space to the real line (half-plane). The results in this section were not presented before. 

Note, that the formulas in this section do not look as elegant as in the case of perturbations of unitary operators. Probably, a different approach to the model of dissipative operators would be more appropriate, but we do not know a serious contender yet. As a pure speculation, the de Branges spaces $\mathfrak L(\f)$ could serve as appropriate model spaces for the dissipative perturbations. These spaces were introduced in the first chapter of \cite{dBr-HSAF_1968}, but were not much investigated, unlike the spaces $\mathcal H(E)$ which were investigated in details in \cite{dBr-HSAF_1968} and were subject of extensive research by many authors.

\section{Self-adjoint and unitary rank one perturbations}\label{s-SA}

\subsection{Self-adjoint rank one perturbations}
\label{s:SA-RK1}
For a self-adjoint (possibly unbounded)  operator  $A$   on a separable Hilbert space $\mathcal H$ let us  consider the family of rank-one perturbations $A_\alpha$, $\alpha\in \R$, 
given by
\begin{align}\label{d-SA}
A_\alpha:= A+\alpha(\,\cdot\,,\f)\ci{\cH}\f\qquad\text{on }\cH.
\end{align}
Here, if the operator $A$ is bounded, then $\f$ is a vector in $\mathcal H$. For unbounded $A$, we can consider the wider class of ``singular form-bounded" perturbations where we assume $\varphi\in\mathcal H_{-1}(A)\supset \mathcal H$, where $\mathcal H_{-r}(A)$, $r\in \N$, is the completion of ${\cH}$ with respect to the norm $\| \cdot\|\ci{\cH_{-r}(A)}$,  $\| f\|\ci{\cH_{-r}(A)} = \| (I +|A|)^{-r/2}f\|\ci{\cH}$. In particular, the perturbation $\alpha (\fdot, \f)\f$ can be unbounded (see \cite{KL, LiawTreil2009} and the references within for further details). If $\cH =L^2(\R,\mu)$ and $A=M_t$ is the multiplication by the independent variable, 
\[
M_t f(t) =tf(t), \qquad \forall t\in\R, 
\]
then $\cH_{-1}(A)$ is exactly the collection of measurable functions such that 
\[
\int_\R \frac{|f(t)|^2}{1+|t|} d\mu(t) <\infty. 
\]

For $r\ge2$ the formal expression \eqref{d-SA} does not uniquely determine a self-adjoint operator: For fixed $\alpha$ there is a family of self-adjoint operators corresponding to \eqref{d-SA}. For this reason we do not consider this case, but rather assume that $r<2$.

Without loss of generality we can assume that $\f$ is cyclic for $A$, that is,  $$\mathcal H = \clos\spa\{(A-\lambda \OID)^{-1}\f: \lambda \in \mathbb{C}\setminus \mathbb{R}\}.$$ Otherwise, i.e.~if $\widetilde{\mathcal H} = \clos\spa\{(A-\lambda \OID)^{-1}\f: \lambda \in \mathbb{C}\setminus \mathbb{R}\}\subsetneq \cH$, then we restrict our attention to the action on $\widetilde{\mathcal H}$ as the perturbation is trivial (does nothing) on $\cH\ominus \widetilde{\mathcal H} $. 

Then according to the Spectral Theorem the operator $A$ is unitarily equivalent to the multiplication $M_t$ by the independent variable in a space $L^2(\mu)$ where $\mu$ is a spectral measure of the operator $A$. Spectral measure is of course  not unique, multiplying a spectral measure by a non-vanishing weight (i.e.~by a function $w\in L^1\ti{loc}(\mu)$, $w>0$ $\mu$-a.e.) we get a different spectral measure. 

It is customary in the operator theory and mathematical physics to consider the \emph{canonical} spectral measure to be the spectral measure associated with the ``vector'' $\f$, i.e.~the unique measure $\mu$ such that 
\[
F(\la) := ((A-\la \bI)^{-1} \f, \f) = \int_\R \frac{d\mu(x)}{x-\la}. 
\]
In this case $\f$ is represented by the function $\1$, and the assumption that $\f\in \cH_{-1}(A)$ means simply that $\int_\R (1+|x|)^{-1} d\mu(x) <1$. 

Knowing function $F$ one can say a lot about the spectral measure $\mu$: since the imaginary part of $F$ is (up to the factor $\pi$) the Poisson integral of $\mu$ we can immediately conclude that the density of the absolutely continuous part of $\mu$ is given by the non-tangential boundary values of $\pi^{-1}\im F(z)$ (such values exist a.e.~by classical results). It is also not hard to show that the singular part of $\mu$ is supported on a set where the (non-tangential) boundary values of $\im F$ are infinite. 

In the heart of the theory of rank one perturbations lies the simple fact that there is a simple relation between the function $F$ and the corresponding functions $F_\alpha$ for the perturbed operators. 

Namely,  the following simple formula for the inverse of the rank one perturbation of the identity is well known
\begin{align}
\label{pert_I}
\Bigl(\bI - (\fdot, b) a \Bigr)^{-1} = \bI + \frac{1}{d}\, (\fdot, b) a;  
\end{align}
here $d=1-(b,a)$ is the so-called perturbation determinant, and the operator is invertible if and only if $d\ne0$. The proof of this formula is an easy exercise, we leave it to the reader. 

Using the above formula \eqref{pert_I} one can easily compute the resolvent of the perturbed operator $A_\alpha$, 
\begin{align}\label{singres}
(A_\alpha-\lambda\OID)^{-1}f
&=(A-\lambda\OID)^{-1}f-\frac{\alpha\left((A-\lambda\OID)^{-1}f,\f\right)}{1+\alpha\left((A-\lambda\OID)^{-1}\f,\f\right)}(A-\lambda\OID)^{-1}\f
\end{align}
which immediately implies the relation between the function $F$ and the corresponding functions $F_\alpha$, $F_\alpha(\la):= ( (A_\alpha -\la\bI)^{-1}\f,\f)$, commonly known as the Aronszajn--Krein formula:
\begin{align}
\label{Ar-Kr}
F_\alpha = \frac{F}{1+\alpha F}\,.
\end{align}
If $\mu_\alpha$ denotes the spectral measure of the perturbed operator $A_\alpha$ associated with $\f$, then 
\[
F_\alpha(\la) = \int_\R\frac{d\mu_\alpha(x)}{x-\la}  .
\]
(Note that it is not hard to show that if $\f$ is cyclic for $A$, then $\f$ is also cyclic for $A_\alpha$ and therefore $A_\alpha$ is unitarily equivalent to the multiplication operator $M_t$ in $L^2(\mu_\alpha)$).

Many classical results in perturbation theory can be obtained from the Aronszajn--Krein formula \eqref{Ar-Kr} and classical results about boundary values of the Cauchy transform. 

For example, it is not hard to show that all the absolutely continuous parts of the measures $\mu_\alpha$ are equivalent (i.e.~mutually absolutely continuous), which is just the Kato--Rosenblum theorem for rank one perturbations. Also, the analysis of the singular parts of the measures $\mu_\alpha$ yields the famous Aronszajn--Donoghue theorem, stating that the singular parts of $\mu_\alpha$ are mutually singular.

\subsection{Rank one perturbations and singular integral operators}\label{ss-RSIO}
We find a sufficient condition on the absence of singular spectrum by studying the spectral representation, which comes in the form of a two weight Hilbert transform. Part of this material can be understood as a first example for Section \ref{s-SIO}.

Consider a family of rank one perturbations  given by $A_\alpha := A+\alpha(\cdot,\f)\f$, see \eqref{d-SA}, where $\f\in \cH_{-1}(A)$ is cyclic for $A$.  Let $\mu$ denote the spectral measure of operator $A$ with respect to $\f$, so $A$ is unitarily equivalent to the multiplication operator $M_t$ in $L^2(\mu)$. Let us consider the operator $A$ in its spectral representation, i.e.~let us assume that $A$ \emph{is} the multiplication operator $M_t$ in $L^2(\mu)$. As we discussed before, the assumption that $\f\in\cH_{-1}(A)$ means simply that $\int_\R(1+|x|)^{-1} d\mu(x)<\infty$,

Then the operator $A_\alpha = A+\alpha(\,\cdot\,,\f)\f$ is defined by
\[
A_\alpha = A+\alpha(\cdot,\f)\f= M_t+\alpha(\fdot, \ID)\ci{L^2(\mu)}\ID
\]
on $L^2(\mu)$. 
On the other hand, the operator $A_\alpha$ is unitarily equivalent to the multiplication $M_s$ by the independent variable $s$ in $L^2(\mu_\alpha)$ (we use a different letter for the independent variable here to distinguish between the multiplication operators in $L^2(\mu)$ and $L^2(\mu_\alpha)$). 

We want to find  a unitary operator giving the spectral representation of the operator $A_\alpha$, i.e.~a unitary operator 
\[
V_\alpha:L^2(\mu)\to L^2(\mu_\alpha)
\]
such that 
\[
V_\alpha A_\alpha = M_s V_\alpha. 
\]
We also want $\f$ to be represented by $\1$ in both representations, which translates to additional condition $V_\alpha\f=\1$. 


Theorem 2.1 of \cite{LiawTreil2009} gives the representation of $V_\alpha$ as the Hilbert transform type singular integral. 
\begin{theo}[Representation Theorem]\label{t-repr-V}
Under the above assumptions the spectral representation $V_\alpha: L^2(\mu)\to L^2(\mu_\alpha)$ of $A_\alpha$ is given by
\begin{equation}
\label{repr-V}
V_\alpha f(s) = f(s) -\alpha \int\frac{f(s)-f(t)}{s-t}\,d\mu(t)
\end{equation}
for all compactly supported $C^1$ functions $f$.
\end{theo}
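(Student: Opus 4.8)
The plan is to verify the intertwining relation $V_\alpha A_\alpha = M_s V_\alpha$ together with the normalizations $V_\alpha\1 = \1$ and the unitarity of $V_\alpha$, and then invoke uniqueness of the spectral representation. First I would check that the right-hand side of \eqref{repr-V} makes sense: for a compactly supported $C^1$ function $f$ the difference quotient $\bigl(f(s)-f(t)\bigr)/(s-t)$ is bounded and continuous, and decays like $|t|^{-1}$ as $t\to\infty$ (since $f$ has compact support), so the integral against $d\mu$ converges because $\int_\R(1+|x|)^{-1}d\mu(x)<\infty$; thus $V_\alpha f$ is a well-defined bounded function, and one checks it lies in $L^2(\mu_\alpha)$ using the Aronszajn--Krein relation between $\mu$ and $\mu_\alpha$.

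The computational heart is the intertwining identity. I would compute $V_\alpha(A_\alpha f)$ directly. Writing $A_\alpha f = M_t f + \alpha\langle f,\1\rangle\ci{L^2(\mu)}\1$, one expands
\[
V_\alpha(A_\alpha f)(s) = (A_\alpha f)(s) - \alpha\int\frac{(A_\alpha f)(s)-(A_\alpha f)(t)}{s-t}\,d\mu(t),
\]
and separately one computes $s\cdot V_\alpha f(s)$. The key algebraic manipulation is the telescoping identity
\[
\frac{sf(s)-tf(t)}{s-t} = f(t) + s\cdot\frac{f(s)-f(t)}{s-t},
\]
which converts the ``$M_t f$'' part of $A_\alpha f$ into $f$ integrated against $d\mu$ plus $s$ times the singular integral of $f$; the term $\int f(t)\,d\mu(t) = \langle f,\1\rangle\ci{L^2(\mu)}$ then combines with the rank one term. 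Collecting everything, and using that $\int \frac{1}{s-t}d\mu(t)$ relates to the Cauchy transform $F$ of $\mu$ while $\mu_\alpha$ satisfies the Aronszajn--Krein formula \eqref{Ar-Kr}, the two expressions should match after the rank one contributions cancel against the boundary-value identities tying $F$, $F_\alpha$, $\mu$, and $\mu_\alpha$ together. One must be slightly careful that $1\in\mathrm{dom}(M_s)$ issues do not arise — but since we only test on compactly supported $C^1$ functions, all intermediate quantities are honest $L^2$ functions.

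The main obstacle will be the bookkeeping in the cancellation: one needs the precise relation between $\int(s-t)^{-1}d\mu(t)$, the analogous integral against $\mu_\alpha$, and the perturbation determinant $1+\alpha F$, and these are only valid after taking appropriate (non-tangential) boundary values, so some care is needed to justify that the formal identities hold $\mu_\alpha$-a.e.\ rather than merely off the real line. Once the intertwining and $V_\alpha\1=\1$ are established, unitarity follows because $A_\alpha$ is cyclic with cyclic vector $\f$ (represented by $\1$ on both sides), so $V_\alpha$ maps the cyclic subspace generated by $\1$ under $M_s$ onto all of $L^2(\mu_\alpha)$ isometrically; alternatively one cites the Rigidity Theorem mentioned in the introduction, which already packages the statement that a bounded injective operator of this form is, after renormalization, exactly the spectral-representation unitary. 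Finally, I would remark that the $C^1$ compactly supported functions are dense in $L^2(\mu)$, so the formula determines $V_\alpha$ uniquely.
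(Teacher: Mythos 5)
Your overall plan --- verify the intertwining relation for the operator defined by \eqref{repr-V} and then identify it with the spectral representation --- is viable and is essentially the paper's computation run backwards: the paper starts from the abstractly existing unitary $V_\alpha$ (spectral theorem plus cyclicity of $\f$ for $A_\alpha$), derives $V_\alpha M_t^n$ by induction from the commutation relation, applies the result to $\1$ and sums the geometric series $\sum_{k=0}^{n-1}t^k s^{n-1-k}=(s^n-t^n)/(s-t)$ to read off \eqref{repr-V} on monomials, and then extends by linearity and approximation. Your telescoping identity is exactly the $n=1$ step of that induction, and your verification of $W A_\alpha=M_s W$ (writing $W$ for the operator defined by the right-hand side of \eqref{repr-V}) is correct and clean.

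However, you have misdiagnosed where the work lies. The intertwining cancellation is purely algebraic and pointwise in $s$: the rank one term contributes $\alpha\langle f,\1\rangle$, the telescoped integral produces exactly $\alpha\int f\,d\mu=\alpha\langle f,\1\rangle$ plus $s$ times the singular integral, and these cancel identically. No Aronszajn--Krein formula, no perturbation determinant $1+\alpha F$, no non-tangential boundary values, and no relation between $\int(s-t)^{-1}d\mu(t)$ and the analogous integral against $\mu_\alpha$ enter at this stage --- the bare Cauchy kernel never appears, only difference quotients. If the proof genuinely needed those boundary-value identities here it would be in trouble, since they hold only $\mu_\alpha$-a.e.\ after the delicate analysis carried out later in the paper. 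The genuine gap is at the other end: unitarity of $W$ does not follow from the intertwining relation plus $W\1=\1$, and the Rigidity Theorem cannot be invoked to supply it (that theorem presupposes boundedness and trivial kernel, and yields unitarity only after renormalization by a weight $h$, so you would still have to show $h\equiv1$). The correct identification is: the unitary $V_\alpha$ with $V_\alpha\1=\1$ exists abstractly; both $V_\alpha$ and $W$ intertwine and fix $\1$, hence agree on $\spa\{A_\alpha^n\1\}$, which is dense by cyclicity of $\f$; the isometry is thus inherited from $V_\alpha$ rather than proved for $W$. This forces you back to polynomials and a density/approximation argument to reach compactly supported $C^1$ functions --- i.e., back to the paper's proof --- and, as the paper notes, the whole monomial scheme works only for bounded $A$; the unbounded case requires the resolvent identity \eqref{singres} instead of the commutation relation.
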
 

Without going into the details of the proof, we indicate the proof strategy for bounded operators $A$. The intertwining condition 
\begin{align}
\label{comm-rel}
M_s V_\alpha = V_\alpha A_\alpha = V_\alpha (M_t+\alpha(\fdot,\f)\f)
\end{align}
can be rewritten as 
\[
V_\alpha M_t = M_s V_\alpha - \alpha (\fdot, \f)V_\alpha\f. 
\]

Using induction we get 
\[
V_\alpha M_t^n = M_s^nV_\alpha -\alpha \sum_{k=0}^{n-1} (\fdot, M_t^k \f) M_s^{n-k-1} V_\alpha \f. 
\]
Recalling that $\f\equiv \1$, $V_\alpha \f\equiv \1$, we get by applying the above identity to $\f$ and denoting $f_n(t):= t^n$ we get 
\[
V_\alpha f_n (s)  = s^n -\alpha  \int\ci\R \left(  \sum_{k=0}^{n-1} t^k s^{n-k-1} \right) \,d\mu(t) .
\]
Summing the geometric progression under the integral we get the representation formula \eqref{repr-V} for $f=f_n$, $f_n(t)= t^n$. Linearity of \eqref{repr-V} implies that it holds for all polynomials, and rather standard approximation reasoning allows to extend this formula to the case of compactly supported $C^1$ functions.

This reasoning, of course, works only for bounded operators $A$ (i.e.~when the measure $\mu$ is compactly supported). In the case of unbounded operators the resolvent identity \eqref{singres} is used instead of \eqref{comm-rel}, see \cite{LiawTreil2009} for the details. 


Aside we mention that integral operators represented by formula \eqref{repr-V} are very interesting objects, probably deserving more careful investigation. Without proof we mention one property (see Theorem 2.2 of \cite{LiawTreil2009}), which can be understood as a converse to the latter Representation Theorem.

\begin{theo}[Rigidity Theorem]
\label{rigTM}
Let measure $\mu$ on $\R$ be supported on at least two distinct points and satisfy $\int(1+|t|)^{-1}\,d\mu(t) <\infty$. 
Let $V$ be defined on compactly supported $C^1$ functions $f$ by formula \eqref{repr-V}.

Assume $V$ extends to a bounded operator from $L^2(\mu)$ to $L^2(\nu)$. Assume $\Ker V=\{0\}$.

Then there exists a function $h$ such that $1/h\in L^\infty(\nu)$, and $M_h V$ is a unitary operator from $L^2(\mu)\to L^2(\nu)$ (equivalently, that $V:L^2(d\mu)\to L^2( |h|^{2}\,d\nu)$ is unitary). 
 
Moreover, the unitary operator $U:=M_hV$ gives the spectral representation of the operator $A_\alpha := M_t + \alpha (\fdot, \f)\f$, $\f\equiv \ID$, in $L^2(\mu)$, namely $UA_\alpha = M_s U$, where $M_s$ is the multiplication by the independent variable $s$ in $L^2(\nu)$. 
\end{theo}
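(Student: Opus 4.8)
\emph{Plan.} The guiding idea is that formula~\eqref{repr-V} is rigid enough to force $V$ to coincide --- up to a reinterpretation of the target measure --- with the operator $V_\alpha$ of Theorem~\ref{t-repr-V}. Let $\mu_\alpha$ be the spectral measure of $A_\alpha$ associated with $\1$, and let $V_\alpha\colon L^2(\mu)\to L^2(\mu_\alpha)$ be the unitary of Theorem~\ref{t-repr-V}, so $V_\alpha A_\alpha=M_sV_\alpha$. By Aronszajn--Krein~\eqref{Ar-Kr}, $\im F_\alpha(i)=\int(1+x^2)^{-1}d\mu_\alpha(x)<\infty$, so $\mu_\alpha$ is locally finite and $e_\lambda:=(x-\lambda)^{-1}\in L^2(\mu_\alpha)$ for $\lambda\in\C\setminus\R$. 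The two basic identities I would first establish by a direct computation from~\eqref{repr-V} (using the resolvent identity~\eqref{singres} to cover unbounded $A$) are
\begin{equation*}
V(A_\alpha-\lambda)^{-1}=(M_s-\lambda)^{-1}V \qquad\text{and}\qquad Ve_\lambda=\bigl(1+\alpha F(\lambda)\bigr)\,(s-\lambda)^{-1},\qquad\lambda\in\C\setminus\R.
\end{equation*}
The first gives $VA_\alpha=M_sV$ on $\mathrm{Dom}(A_\alpha)$; the second shows $(s-\lambda)^{-1}\in L^2(\nu)$ (since $Ve_\lambda\in L^2(\nu)$ and $1+\alpha F(\lambda)\neq0$ off $\R$ when $\alpha\neq0$), hence $\int(1+s^2)^{-1}d\nu<\infty$ and $\nu$ is locally finite. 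Because $V_\alpha$ is given by the \emph{same} formula~\eqref{repr-V}, the analogous identities hold with $(V,\nu)$ replaced by $(V_\alpha,\mu_\alpha)$.

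Put $T:=VV_\alpha^{-1}\colon L^2(\mu_\alpha)\to L^2(\nu)$, bounded with $\|T\|\le\|V\|$. Since $Ve_\lambda$ and $V_\alpha e_\lambda$ are the same function $(1+\alpha F(\lambda))(s-\lambda)^{-1}$, we get $Te_\lambda=e_\lambda$ for every $\lambda\in\C\setminus\R$; that is, on the subspace $\mathcal E:=\spa\{e_\lambda:\lambda\in\C\setminus\R\}$ the operator $T$ is the tautological map ``regard a function as an element of $L^2(\nu)$''. Now $\mathcal E$ is dense in $L^2(\sigma)$ for \emph{any} positive measure $\sigma$ on $\R$ with $\int(1+s^2)^{-1}d\sigma<\infty$ (if $u\in L^2(\sigma)$ is orthogonal to $\mathcal E$, the Cauchy transform of $u\,d\sigma$ vanishes identically off $\R$, so $u\,d\sigma=0$); in particular $\mathcal E$ is dense in $L^2(\mu_\alpha+\nu)$. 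For a bounded Borel set $E$, choose $g_n\in\mathcal E$ with $g_n\to\1_E$ in $L^2(\mu_\alpha+\nu)$; then $g_n\to\1_E$ in both $L^2(\mu_\alpha)$ and $L^2(\nu)$, while $Tg_n=g_n\to T\1_E$ in $L^2(\nu)$, so $T\1_E=\1_E$ and $\nu(E)=\|T\1_E\|_{L^2(\nu)}^2\le\|T\|^2\mu_\alpha(E)$. Hence $\nu\le\|V\|^2\mu_\alpha$; write $d\nu=w\,d\mu_\alpha$ with $0\le w\le\|V\|^2$ $\mu_\alpha$-a.e. Since $\nu\ll\mu_\alpha$, the reinterpretation map $L^2(\mu_\alpha)\to L^2(\nu)$ is a bounded operator that agrees with $T$ on the dense set $\mathcal E$, so $Tg=g$ for \emph{every} $g\in L^2(\mu_\alpha)$.

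Next I would use $\Ker V=\{0\}$. As $V_\alpha$ is unitary, $\Ker T=\{0\}$; but $Tg=0$ in $L^2(\nu)$ means $\int|g|^2w\,d\mu_\alpha=0$, i.e.\ $g=0$ $\mu_\alpha$-a.e.\ on $\{w>0\}$, so triviality of $\Ker T$ forces $w>0$ $\mu_\alpha$-a.e. Therefore $\mu_\alpha$ and $\nu$ are mutually absolutely continuous, $d\mu_\alpha=w^{-1}d\nu$. Let $h$ be any $\nu$-measurable function with $|h|^2=w^{-1}$ (finite and nonzero $\nu$-a.e.; and $|1/h|^2=w\le\|V\|^2$, so $1/h\in L^\infty(\nu)$ with $\|1/h\|_\infty\le\|V\|$). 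Then $M_hV=(M_hT)V_\alpha$, where $M_hT\colon L^2(\mu_\alpha)\to L^2(\nu)$, $g\mapsto hg$, is an isometry ($\int|hg|^2\,d\nu=\int w^{-1}|g|^2w\,d\mu_\alpha=\|g\|_{L^2(\mu_\alpha)}^2$) which is onto, with inverse $u\mapsto u/h$, hence unitary. So $U:=M_hV=(M_hT)V_\alpha$ is unitary from $L^2(\mu)$ onto $L^2(\nu)$; equivalently $V\colon L^2(d\mu)\to L^2(|h|^2d\nu)$ is unitary. Finally $UA_\alpha=(M_hT)V_\alpha A_\alpha=(M_hT)M_sV_\alpha=M_s(M_hT)V_\alpha=M_sU$, since $M_hT$ commutes with multiplication by $s$; this is the asserted spectral representation.

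The main obstacle is the very first step: \eqref{repr-V} is only stated for compactly supported $C^1$ functions, whereas neither $e_\lambda$ nor $(A_\alpha-\lambda)^{-1}f$ has compact support. One circumvents this by approximating, e.g., $e_\lambda$ by $e_\lambda\chi_n$ with smooth cutoffs $\chi_n$, using boundedness of $V$ on $L^2(\mu)$ and dominated convergence for the difference quotient $\frac{e_\lambda(s)-e_\lambda(t)}{s-t}=\frac{-1}{(s-\lambda)(t-\lambda)}$, which is $d\mu(t)$-integrable because $\int(1+|t|)^{-1}d\mu(t)<\infty$ --- exactly the bookkeeping already carried out for Theorem~\ref{t-repr-V} in \cite{LiawTreil2009}, with~\eqref{singres} making the argument survive for unbounded $A$. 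The hypothesis that $\mu$ is supported on at least two points is a mild non-degeneracy condition: it guarantees $\mu\neq0$, so $F$ is a non-constant Herglotz function and (for $\alpha\neq0$) $1+\alpha F$ has no zeros off $\R$. The remaining case $\alpha=0$ is trivial, as then $V$ is the identity map and $h$ is the square root of the relevant Radon--Nikodym derivative.
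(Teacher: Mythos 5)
Your argument is correct, but be aware that these notes do not actually contain a proof of the Rigidity Theorem: it is stated explicitly ``without proof'' and attributed to Theorem~2.2 of \cite{LiawTreil2009}, so the only comparison available is with that reference, whose argument runs through the intertwining relation and commutant/cyclicity considerations for $V^*V$ and $VV^*$. Your route is organized quite differently and, in my view, more economically: you exploit the already-known unitary $V_\alpha$ of Theorem~\ref{t-repr-V}, compute $Ve_\lambda=(1+\alpha F(\lambda))e_\lambda$ for $e_\lambda(t)=(t-\lambda)^{-1}$ (clean because the difference quotient of $e_\lambda$ factors as $-(s-\lambda)^{-1}(t-\lambda)^{-1}$, and $1+\alpha F(\lambda)\neq 0$ off $\R$ since $F$ is a nonconstant Herglotz function and $\alpha$ is real), and then use density of $\spa\{e_\lambda:\lambda\in\C\setminus\R\}$ in $L^2(\mu_\alpha+\nu)$ to identify $T=VV_\alpha^{-1}$ with the tautological embedding $L^2(\mu_\alpha)\hookrightarrow L^2(\nu)$. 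From there $\nu\le\|V\|^2\mu_\alpha$, the injectivity hypothesis forces the density $w$ to be positive $\mu_\alpha$-a.e., and both the unitarity of $M_hV$ and the intertwining $UA_\alpha=M_sU$ are inherited from $V_\alpha$ rather than re-derived. What this buys is that all the measure-theoretic content is extracted by one soft density argument; the price is the extension of formula~\eqref{repr-V} to the non-compactly-supported functions $e_\lambda$, which you correctly identify as the one delicate step and which the cutoff/dominated-convergence argument you sketch does handle (a subsequence of $V(e_\lambda\chi_n)$ converges $\nu$-a.e., matching the pointwise limit). Two minor remarks: the displayed identity $V(A_\alpha-\lambda)^{-1}=(M_s-\lambda)^{-1}V$ is never used downstream and can be dropped, which spares you from extending \eqref{repr-V} to resolvents of general $L^2(\mu)$ functions; and your proof uses the two-point hypothesis only through $\mu\neq 0$, which is legitimate but worth stating, since the hypothesis as written is stronger than what your argument consumes.
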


The integral in the representation formula looks like a singular integral operator, but not exactly in the traditional sense. The attempt to understand the precise connection with the theory of classical singular integral operators lead us to the theory of regularizations. We describe these results in more detail in Section \ref{s-SIO} on general abstract singular integral operators.

But now, let us first notice that 
\begin{equation*}
(V_\alpha f,g)\ci{L^2(\mu_\alpha )}=-\alpha\iint \frac{f(t)\overline{g(s)}}{s-t}\,\dd\mu(t)\,\dd\mu_\alpha (s)
\end{equation*}
for all $f\in L^2(\mu)$ and $g\in L^2(\mu_\alpha )$ with separated compact supports. This equality is trivial for compactly supported $C^1$ function $f$  and $g$ (with separated compact supports) and can be extended to the general case by a standard approximation argument.

Therefore, the kernel $K(s, t) = 1/(s-t)$ is what we call \emph{restrictedly bounded} kernel, see Definition \ref{d:RestrBd} below. 
%


An application of Theorem \ref{t:reg-bd-01} and Remark \ref{remark} shows the following result.

\begin{theo}\label{t-UNIF} For the measures $\mu$, $\mu_\alpha$ as above, the operators $T_\e: L^2 (\mu)\to L^2(\mu_\alpha)$, 
$$
T_\e f (s): = \int_\R \frac{f(t)}{s-t + i\e} d\mu(t),
$$
are uniformly (in $\e$) bounded.
\end{theo}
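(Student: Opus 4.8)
The plan is to deduce Theorem~\ref{t-UNIF} directly from the general regularization machinery of Section~\ref{s-SIO}, applied to the concrete kernel $K(s,t)=1/(s-t)$. First I would observe that the two measures $\mu$ and $\mu_\alpha$ satisfy the hypotheses needed there: both satisfy $\int_\R(1+|t|)^{-1}\,d\mu(t)<\infty$ (and the same for $\mu_\alpha$, which follows since $\f\equiv\1\in\cH_{-1}(A_\alpha)$ as well), and crucially, by the Aronszajn--Donoghue theorem the singular parts of $\mu$ and $\mu_\alpha$ are mutually singular, so $\mu$ and $\mu_\alpha$ have no common atoms --- this is exactly the ``no common atoms'' hypothesis required for the uniform boundedness of regularizations. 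Second, I would invoke the identity established just above the statement, namely
\[
(V_\alpha f,g)\ci{L^2(\mu_\alpha)}=-\alpha\iint\frac{f(t)\overline{g(s)}}{s-t}\,d\mu(t)\,d\mu_\alpha(s)
\]
for $f,g$ with separated compact supports, together with the fact that $V_\alpha$ is unitary (Theorem~\ref{t-repr-V}), hence bounded $L^2(\mu)\to L^2(\mu_\alpha)$ with norm $1$. This says precisely that the pairing $|\La Tf,g\Ra|\le|\alpha|\,\|f\|\ci{L^2(\mu)}\|g\|\ci{L^2(\mu_\alpha)}$ holds for all such test pairs, i.e.~the kernel $K(s,t)=1/(s-t)$ is $L^2(\mu)\to L^2(\mu_\alpha)$ \emph{restrictedly bounded} in the sense of Definition~\ref{d:RestrBd}, with restricted-boundedness constant at most $|\alpha|$.

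Third, I would check that the family $K_\e(s,t):=1/(s-t+i\e)$ is an admissible regularization of $K$ in the sense demanded by Theorem~\ref{t:reg-bd-01}; this is the routine verification that $K_\e\to K$ pointwise off the diagonal, that $|K_\e(s,t)|\lesssim |s-t|^{-1}$ uniformly in $\e$ away from the diagonal, that $\sup_{s}|K_\e(s,t)|$ (and the symmetric quantity) is controlled, and that $K_\e$ has the required smoothness/decay so as to fall under the ``reasonable regularization'' umbrella --- in particular $|K_\e(s,t)|\le 1/\e$ everywhere, so each $T_\e$ is an honest bounded operator to begin with and the only issue is \emph{uniformity} in $\e$. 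Then Theorem~\ref{t:reg-bd-01} (as noted, via Remark~\ref{remark} to cover this particular shape of regularization, which is a shift in the imaginary direction rather than a truncation) applies and yields $\sup_\e\|T_\e\|_{L^2(\mu)\to L^2(\mu_\alpha)}<\infty$, which is the assertion.

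The main obstacle --- really the only place where something must be said rather than quoted --- is making sure the Poisson-type regularization $K_\e(s,t)=(s-t+i\e)^{-1}$ genuinely fits the axioms under which Section~\ref{s-SIO} proves uniform boundedness; the cleanest known regularizations there are the truncations $K_\e=K\cdot\1_{|s-t|>\e}$, and one has to confirm (this is the content of Remark~\ref{remark}) that a smooth ``complex-shift'' regularization differs from a truncation by operators that are uniformly bounded for elementary reasons. Concretely one writes $(s-t+i\e)^{-1}=\1_{|s-t|>\e}(s-t)^{-1}+\big[(s-t+i\e)^{-1}-\1_{|s-t|>\e}(s-t)^{-1}\big]$ and checks that the bracketed error kernel is dominated by $C\e/((s-t)^2+\e^2)$, i.e.~by a Poisson-type kernel, whose associated operator is bounded $L^2(\mu)\to L^2(\mu_\alpha)$ uniformly in $\e$ by a Schur test using the finiteness of $\int(1+|t|)^{-1}d\mu$ and the absence of common atoms (the atomic part is where one has to be slightly careful, since a common atom would make the Poisson kernel blow up — but that case is excluded by Aronszajn--Donoghue). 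Once that error term is disposed of, the uniform bound for the truncations from Theorem~\ref{t:reg-bd-01} transfers to $T_\e$, completing the argument.
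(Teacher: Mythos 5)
Your first two steps are exactly the paper's: the bilinear form identity plus unitarity of $V_\alpha$ gives restricted boundedness of the kernel $1/(s-t)$ (though note the constant comes out as $1/|\alpha|$, not $|\alpha|$, since $\La T f,g\Ra = -\alpha^{-1}(V_\alpha f,g)$), and Aronszajn--Donoghue supplies the ``no common atoms'' hypothesis. The gap is in your third step, the one place where, as you say, something must be said rather than quoted. You propose to compare $K_\e(s,t)=(s-t+i\e)^{-1}$ with the classical truncation $\1_{|s-t|>\e}(s-t)^{-1}$ and to dispose of the error kernel, dominated by $C\e/((s-t)^2+\e^2)$, ``by a Schur test.'' This does not work. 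The Schur test for that Poisson-type kernel requires uniform (in $\e$) control of $\sup_s\int \e((s-t)^2+\e^2)^{-1}\,d\mu(t)$ and of the symmetric quantity for $\mu_\alpha$; these are the Poisson extensions of $\mu$ and $\mu_\alpha$ at height $\e$, and they blow up as $\e\to0$ at every point carrying singular mass (and even on the absolutely continuous part if the density is unbounded). Neither $\int(1+|t|)^{-1}d\mu<\infty$ nor the absence of common atoms rescues this: those hypotheses control infinity and exact coincidence of point masses, not the local concentration of $\mu$ near $\supp\mu_\alpha$. In fact the Poisson kernel is, up to a constant, $K_{+\e}-K_{-\e}$, so its uniform boundedness between these two spaces is essentially equivalent to the statement you are trying to prove; it is an output of the theory, not an elementary input. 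Moreover, your reduction presupposes the uniform boundedness of the classical truncations, which Theorem~\ref{t:reg-bd-01} does \emph{not} give ($m=\1_{(1,\infty)}(|x|)$ is not a $C^\infty$ regularizer); the paper explicitly states that result for ``interesting'' kernels without proof, so you would be resting on the hardest unproved piece of the section.

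The route the paper (Remark~\ref{remark}) actually takes avoids the truncation entirely: one treats $K_\e$ directly as a Schur-multiplier perturbation of the restrictedly bounded kernel. Writing $m_\e(s)=s/(s-i\e)=1-\widehat{\rho_\e}(s)$ with $\rho(x)=\1\ci{[0,\infty)}e^{-x}$, the function $M_\e(s,t)=m_\e(s-t)$ is a Schur multiplier with norm at most $1+\|\rho\|_{L^1}=2$, uniformly in $\e$, because it is the Fourier transform of a measure of total variation at most $2$. Hence $K\cdot M_\e = K_\e$ is restrictedly bounded with restricted norm at most $2/|\alpha|$, uniformly in $\e$. Theorem~\ref{t:restr-bd1} (the ``well mixed sets'' argument, which is where the no-common-atoms hypothesis enters) then converts uniform restricted boundedness into uniform operator boundedness, at the cost of another factor of $2$. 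If you replace your truncation-comparison step with this Schur-multiplier computation, the proof closes.
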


Uniform boundedness of the operators $T_\e$ implies that there exists a w.o.t.~limit point of $T_\e$, as $\e\to 0$. In fact, it can be shown that this limit point is unique if $\e\to0^+$ or $\e\to0^-$, so we can say that there exist a w.o.t.-limits $T_{\pm}=\text{w.o.t.-}\lim_{\e\to0^{\pm}} T_\e$. 

The existence of w.o.t.~limits follows, for example, from the lemma below
the fact that for $\im z>0$ and for $\im z<0$ the non-tangential boundary values of $R f\mu (z):= \int_\R \frac{fd\mu(t)}{t-z}$ exist $\mu_\alpha$-a.e. 
\begin{lem}
\label{l:nontan_f.mu}
For any $f\in L^2(\mu)$ the non-tangential boundary values of $Rf\mu (z)= \int_\R \frac{fd\mu(t)}{t-z}$,   $z\in\C_+$ or $z\in\C_-$ exist $\mu_\alpha$-a.e.
\end{lem}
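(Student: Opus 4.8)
The plan is to reduce the claim for an arbitrary $f\in L^2(\mu)$ to the already-understood case $f\equiv\1$, for which $Rf\mu(z) = R\1\mu(z) = F(z)$ has non-tangential boundary values a.e. with respect to \emph{every} measure (by the classical theory of the Cauchy/Borel transform, $\im F$ being the Poisson integral of $\mu$ up to $\pi$). More precisely, the Aronszajn--Donoghue circle of ideas tells us that $\mu$ and $\mu_\alpha$ are mutually singular on their singular parts, while on the absolutely continuous part the boundary values of $R(\fdot\,\mu)$ exist a.e. with respect to Lebesgue measure for any $L^2(\mu)$-density because $R\colon L^2(\mu)\to L^p$ of the line has the right mapping properties (Smirnov/M.~Riesz type bounds). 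So the only delicate piece is the behavior of $Rf\mu$ on the part of $\mu_\alpha$ that is singular with respect to Lebesgue measure, and there the key is Poltoratskii's theorem.

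The main step I would carry out: split $\mu_\alpha = \mu_\alpha^{\mathrm{ac}} + \mu_\alpha^{\mathrm{s}}$. On $\mu_\alpha^{\mathrm{ac}}$, which is absolutely continuous with respect to Lebesgue measure, it suffices to know that $Rf\mu$ has non-tangential boundary values Lebesgue-a.e.; this is standard since $f\,d\mu$ is a finite complex measure (after truncating to make $f\in L^1(\mu)$ — one handles general $f\in L^2(\mu)$ by a further splitting $f = f\1_{|t|\le N} + f\1_{|t|>N}$ and using $\int(1+|t|)^{-1}d\mu<\infty$ to control the tail, which decays to a nice analytic function near the support of interest). On $\mu_\alpha^{\mathrm{s}}$, I would invoke Poltoratskii's theorem in the following form: if $\nu$ is a finite measure and $g\in L^1(\nu)$, then the ratio $R(g\nu)(z)/R\nu(z)$ has non-tangential boundary values equal to $g$ a.e. with respect to $\nu^{\mathrm{s}}$ (the part of $\nu$ singular w.r.t. Lebesgue measure). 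Apply this with $\nu = \mu$ and $g = f$: then $R(f\mu)(z) = \bigl(R(f\mu)(z)/R\mu(z)\bigr)\cdot R\mu(z)$, and \emph{both} factors have non-tangential boundary values $\mu^{\mathrm{s}}$-a.e. Since the singular part of $\mu_\alpha$ is supported on a set where $\im F = \im R\mu$ blows up (as recalled in the text), while $\mu^{\mathrm{s}}$ and $\mu_\alpha^{\mathrm{s}}$ are mutually singular by Aronszajn--Donoghue, one checks that on $\mathrm{supp}(\mu_\alpha^{\mathrm{s}})$ the function $Rf\mu$ still has non-tangential limits: indeed on $\mu_\alpha^{\mathrm{s}}$ the density $f$ relative to $\mu$ is essentially irrelevant because $\mu_\alpha^{\mathrm{s}}\perp\mu$, and one can approximate $f$ in $L^2(\mu)$ by functions that are locally constant, for which the boundary values exist by the $f\equiv\1$ case together with continuity of $R$ on compacts away from $\mathrm{supp}\,\mu$.

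The hard part will be organizing the bookkeeping on the singular part $\mu_\alpha^{\mathrm{s}}$: one must be careful that "non-tangential boundary values exist $\mu_\alpha$-a.e." is a statement about a measure ($\mu_\alpha$) that is in general mutually singular with the measure ($\mu$) defining the transform, so naive density/approximation arguments in $L^2(\mu)$ do not directly transfer. The clean route is Poltoratskii's theorem, which is exactly designed to handle $R(f\mu)$ against the singular part of $\mu$ itself, combined with the observation — via Aronszajn--Krein \eqref{Ar-Kr} and Aronszajn--Donoghue — that $\mu_\alpha^{\mathrm{s}}$ lives on the set where $F$ blows up, i.e. precisely where $R\mu$ has infinite (hence, in the appropriate sense, existing) non-tangential boundary values, so that the quotient $R(f\mu)/R\mu$ from Poltoratskii's theorem can be unwound there. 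I would isolate this as the crux, state Poltoratskii's theorem explicitly as the tool, and let the ac-part argument be the routine remainder.
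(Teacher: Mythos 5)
Your treatment of the absolutely continuous part is fine and matches the paper's (classical Hardy-space boundary values, after reducing to nonnegative $f$ or finite complex measures $f\,d\mu$). The gap is in the singular part, and it is exactly the difficulty you flag but do not resolve. Poltoratskii's theorem applied with $\nu=\mu$, $g=f$ gives existence of the non-tangential limits of $R(f\mu)/R\mu$ only $\mu\ti{s}$-a.e.; since by Aronszajn--Donoghue $\mu\ti{s}\perp(\mu_\alpha)\ti{s}$ (and $(\mu_\alpha)\ti{s}$ is also singular to Lebesgue measure, hence to all of $\mu$), this statement carries no information on the set carrying $(\mu_\alpha)\ti{s}$, which is precisely where the lemma must be proved. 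The proposed patch --- that ``$f$ is essentially irrelevant'' on $\supp(\mu_\alpha)\ti{s}$ and one can approximate by locally constant functions using continuity of $R$ away from $\supp\mu$ --- does not work: points carrying $(\mu_\alpha)\ti{s}$ are $\mu$-null but in general lie in the closed support of $\mu$ (indeed $\im F=+\infty$ there), so there is no neighborhood disjoint from $\supp\mu$, and $L^2(\mu)$-approximation gives no pointwise control of the Cauchy transform on a $\mu$-null set.

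The paper's proof closes this gap by applying Poltoratskii's theorem to the \emph{perturbed} measure rather than to $\mu$. From the resolvent identity \eqref{singres} one gets $Rf_\alpha\mu_\alpha=Rf\mu/(1+\alpha R\mu)$ with $f_\alpha=V_\alpha f\in L^2(\mu_\alpha)$, and combining with the Aronszajn--Krein formula \eqref{Ar-Kr} yields
\[
\frac{Rf_\alpha\mu_\alpha}{R\mu_\alpha}=\frac{Rf\mu}{R\mu}\,.
\]
Poltoratskii's theorem applied to the pair $(f_\alpha,\mu_\alpha)$ gives existence of the non-tangential limits of the left-hand side $(\mu_\alpha)\ti{s}$-a.e. --- now with respect to the correct measure --- and the observation that $\im F_\alpha=\im F/|1+\alpha F|^2$ forces $F=R\mu$ to have boundary value $-1/\alpha$ $(\mu_\alpha)\ti{s}$-a.e.; multiplying the two limits recovers $Rf\mu$. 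The missing idea in your write-up is therefore the use of the intertwining operator $V_\alpha$ (equivalently, the resolvent identity) to transport $f$ into $L^2(\mu_\alpha)$ before invoking Poltoratskii. Without that step the argument does not go through.
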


\begin{proof}
The a.e.~convergence with respect to Lebesgue measure (and so with respect to the absolutely continuous part of $\mu_\alpha$) follows from classical facts about boundary values of functions from Hardy spaces: for $f\ge 0$ the function $R f\mu(z)$ has non-positive imaginary part, so composing it with a conformal mapping from the lower  half-plane $\C_-$ we get a bounded analytic function, which has non-tangential limits on $\R$ a.e.~with respect to Lebesgue measure. Representing arbitrary complex-valued function as linear combination of 4 non-negative ones we get the a.e.~existence (with respect to Lebesgue measure) in general case.

To prove the convergence with respect to the singular part $(\mu_\alpha)\ti{s}$ of $\mu_\alpha$ we get by applying functional $\f$ to the  resolvent formula \eqref{singres}
and denoting $f_\alpha =V_\alpha f$ that 
\[
Rf_\alpha\mu_\alpha = \frac{Rf\mu}{1+\alpha R\mu}\,.
\]
By Polotratkii's theorem, see \cite[Theorem 2.7]{NONTAN}, the non-tangential boundary values of $Rf_\alpha\mu_\alpha/R\mu_\alpha$ exist (and coincide with $f_\alpha$) $(\mu_\alpha)\ti s$-a.e. Combining the above identity with the Aronszajn--Krein formula \eqref{Ar-Kr} we get 
\begin{align}
\label{Rf_alpha-Rf}
\frac{Rf_\alpha\mu_\alpha }{R\mu_\alpha } = \frac{Rf\mu}{R\mu} . 
\end{align}
But it follows from the Aronszajn--Krein formula \eqref{Ar-Kr} that the non-tangential boundary values of $F=R\mu$ exist (and equal to $-1/\alpha$). $(\mu_\alpha)\ti s$-a.e. Indeed
\[
\im F_\alpha = \frac{\im F}{|1+\alpha F|^2}
\]
and $\im F_\alpha$ function is the Poisson extension (up to the factor $\pi$) of the measure $\mu_\alpha$. 
Therefore, since the singular part of the measure $\mu_\alpha$ is supported on the subset of $\R$ where non-tangential boundary values of $\im F_\alpha$ equal $+\infty$, we can conclude that the non tangential boundary values of $F$ equal $-1/\alpha$ $(\mu_\alpha)\ti s$-a.e. 

Since the non-tangential boundary values in \eqref{Rf_alpha-Rf} exist $(\mu_\alpha)\ti s$-a.e., we conclude that same for $Rf\mu$.
\end{proof}

%
%
%
%
%
%
%

The above Lemma \ref{l:nontan_f.mu} implies the w.o.t.~convergence of $T_\e$ as $\e\to 0^+$ or $\e\to0^-$. 
Indeed, Lemma \ref{l:nontan_f.mu} implies  the $\mu_\alpha$-a.e. convergence, which, in turn implies that any weakly convergent subsequence of $T_\e f$ converges to the same function (the a.e.~limit). And this, as one can easily see, means that $T_\e f$ has a weak limit as $\e\to 0^+$ or $\e\to0^-$. 

So, we can define the operators $T_\pm$ either as w.o.t.~limits of $T_\e$ as $\e\to 0^\pm$ or define $T_\pm f$ as the non-tangential boundary values of $R f\mu(z)$, $z\in \C_\pm$. 


%

Using the operators $T_\pm$ we obtain an alternative representation formula, 
see Theorem 3.2 of \cite{LiawTreil2009}:
\begin{theo}\label{reg-2}
Let $\mu$ and $\mu_\alpha$ be the spectral measures of $A$ and $A_\alpha$, and let $T_{\pm}$ be as defined above. 

Then $V_\alpha$ can be written as 
\begin{align}
\label{Tweak}
V_\alpha f(s)=f(s)(\ID-\alpha \,T_\pm\ID)+\alpha \,T_\pm f, \qquad \forall f\in L^2(\mu). 
\end{align}
%
\end{theo}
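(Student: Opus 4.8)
The plan is to establish \eqref{Tweak} first for $f\in C^1$ with compact support, where formula \eqref{repr-V} is available, and then to pass to general $f\in L^2(\mu)$ by density. For the first step, fix such an $f$, say with $\supp f\subset[-R,R]$. The point is that for a $C^1$ function the difference quotient $(f(s)-f(t))/(s-t)$ is bounded by $\|f'\|\ci\infty$, and it is $O(1/|t|)$ as $|t|\to\infty$ uniformly in $s$ (since $f(t)=0$ for $|t|>R$); as $|s-t+i\e|\ge|s-t|$, the regularized integrands $(f(s)-f(t))/(s-t+i\e)$ are therefore dominated, uniformly in $\e$, by a fixed $L^1(\mu)$ function — here one uses that $\mu$ is finite on compact sets and $\int(1+|t|)^{-1}\,d\mu(t)<\infty$. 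Splitting
\[
\int\frac{f(s)-f(t)}{s-t+i\e}\,d\mu(t)=f(s)\,T_\e\ID(s)-T_\e f(s),
\]
where both integrals on the right converge absolutely, and letting $\e\to0^{\pm}$, dominated convergence on the left and the definition of $T_\pm$ on the right give $\int\frac{f(s)-f(t)}{s-t}\,d\mu(t)=f(s)\,T_\pm\ID(s)-T_\pm f(s)$ $\mu_\alpha$-a.e.; inserting this into \eqref{repr-V} is exactly \eqref{Tweak} for $f\in C^1$ with compact support.

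For the second step I would use that $V_\alpha$ is unitary and that $T_\pm$ is bounded $L^2(\mu)\to L^2(\mu_\alpha)$, being a w.o.t.\ limit of the uniformly bounded operators $T_\e$ (Theorem \ref{t-UNIF}). Given $f\in L^2(\mu)$, pick $C^1$ functions $f_n$ with compact support such that $f_n\to f$ in $L^2(\mu)$ and, along a subsequence, $\mu$-a.e. By the first step $f_n\cdot(\ID-\alpha T_\pm\ID)=V_\alpha f_n-\alpha T_\pm f_n$, and the right-hand side converges in $L^2(\mu_\alpha)$ to $V_\alpha f-\alpha T_\pm f$; refining the subsequence, the convergence holds $\mu_\alpha$-a.e. as well. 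It then remains to recognize the pointwise limit of $f_n\cdot(\ID-\alpha T_\pm\ID)$ as $f\cdot(\ID-\alpha T_\pm\ID)$. On the absolutely continuous part of $\mu_\alpha$ this is immediate once one knows $f_n\to f$ $(\mu_\alpha)\ti{ac}$-a.e., which follows from the mutual absolute continuity of $\mu\ti{ac}$ and $(\mu_\alpha)\ti{ac}$ (Kato--Rosenblum), the factor $T_\pm\ID$ being the (finite a.e.\ Lebesgue) non-tangential boundary value of $-R\mu$. On the singular part $(\mu_\alpha)\ti s$ the computation already carried out in the proof of Lemma \ref{l:nontan_f.mu} shows that the boundary values of $F=R\mu$ equal $-1/\alpha$, so $\ID-\alpha T_\pm\ID\equiv0$ there and both $f_n\cdot(\ID-\alpha T_\pm\ID)$ and $f\cdot(\ID-\alpha T_\pm\ID)$ vanish $(\mu_\alpha)\ti s$-a.e. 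Patching the two parts yields \eqref{Tweak}.

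The main obstacle is the multiplicative term $f\cdot(\ID-\alpha T_\pm\ID)$: the weight $\ID-\alpha T_\pm\ID$ need not be bounded, and it genuinely vanishes on the singular part of $\mu_\alpha$, so one cannot simply treat $M\ci{\ID-\alpha T_\pm\ID}$ as an a priori bounded multiplication operator and invoke a density/continuity argument in the usual way; it becomes bounded only a posteriori, as $V_\alpha-\alpha T_\pm$. This is what forces the pointwise-a.e.\ argument above, with the split of $\mu_\alpha$ into its absolutely continuous and singular parts and the separate inputs from Kato--Rosenblum and from Lemma \ref{l:nontan_f.mu}. A related minor point, worth a sentence in the write-up, is that $T_\pm\ID$ must be interpreted pointwise, as a non-tangential boundary value of $-R\mu$ (which exists $\mu_\alpha$-a.e.), rather than as $T_\pm$ applied to an $L^2(\mu)$ vector, since $\ID$ need not belong to $L^2(\mu)$ when $\mu$ is infinite.
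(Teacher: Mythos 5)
Your proposal is correct and follows essentially the same route as the paper: regularize the kernel with $i\e$, split the integral as $f\cdot T_\e\ID - T_\e f$, and pass to the limit using the uniform bounds of Theorem \ref{t-UNIF} together with the $\mu_\alpha$-a.e.\ boundary-value facts from the proof of Lemma \ref{l:nontan_f.mu}. Your explicit treatment of the extension to general $f\in L^2(\mu)$ --- splitting $\mu_\alpha$ into absolutely continuous and singular parts, invoking Kato--Rosenblum on the former and the vanishing of $\ID-\alpha T_\pm\ID$ on the latter --- is precisely the detail that the paper's terse ``taking w.o.t.\ limits'' leaves implicit, and it is the right way to justify that step.
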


\begin{proof}
Consider operators $V_\alpha^\e:L^2(\mu)\to L^2(\mu_\alpha)$, 
\[
V_\alpha^\e f(s) =  f(s) -\alpha\int\frac{f(s)-f(t)}{s-t+i\e}\,d\mu(t) = f(s)(1- \alpha T_\e \1(s)) + \alpha T_\e f(s)
\]
and notice that for compactly supported $C^1$ function $f$ the functions $V_\alpha^\e f(s)$ converge uniformly and in $L^2(\mu_\alpha)$ to $V_f$ as $\e\to0$. Together with uniform bounds on $T_\e$ this immediately implies that $V_\alpha^\e$ converges in the strong operator topology to $V_\alpha$. 

Taking w.o.t.~limits we arrive  to the representations \eqref{Tweak}. 
\end{proof}

\begin{rem*}
Note that for the existence of the w.o.t.~limits of $T_\e$ it is sufficient to have $\mu_\alpha$-a.e.~convergence on a dense set. As we just discussed  above, for compactly supported $f\in C^1$ the functions $V_\alpha^\e f$ converge uniformly to $V_\alpha f$. It was also shown in the proof of Lemma 
\ref{l:nontan_f.mu} that $T_\e \1(s) = - F(s+i\e)$ converges $\mu_\alpha$-a.e., which immediately implies $\mu_\alpha$-a.e.~convergence of $T_\e f$ for compactly supported $C^1$ functions. 

This approach was used in \cite{LiawTreil2009}. 
\end{rem*}

\subsection{Unitary 
rank one  perturbations}
\label{s:UnitRk1}
In this section we present the analogues of the Representation Theorem \ref{t-repr-V} and the Rigidity Theorem \ref{rigTM} for the case of unitary rank one  perturbations of unitary operators, that were proved in \cite[Section 8]{LT}.

We should mention, that these results cannot be obtained just by taking the Cayley transform of the self-adjoint case, we will explain this in Section \ref{s-SAmodel}. 

In the contrast with the self-adjoint case the description of all unitary rank one perturbations of a unitary operator is not immediately self-evident, but with a little effort one could see that all unitary rank one perturbations of a unitary operator $U$ can be parametrized as 
\begin{align}
\label{U_gamma-SA}
U_{b,\alpha} = U + (\alpha-1) (\fdot, U^*b)b\qquad b\in\cH,\ \|b\|=1, \quad \alpha\in \T. 
\end{align}
The fact that this formula indeed gives us the parametrization of the unitary rank one perturbations can be easily seen in the case $U=\bI$; the general case then is obtained by right multiplying the formula for the perturbation of $\bI$ by $U$. 

In what follows we assume that the vector $b$ is fixed and use the notation $U_\alpha$ for $U_{b,\alpha}$, so our perturbations will be parametrized	 by the scalar parameter $\alpha\in \T:=\{z\in\C:|z|=1\}$.

Since the action of perturbation $(\fdot, U^*b)b$ is trivial (zero) on $(\spa\{U^n b:n\in\Z\})^\perp$, we can ignore what is going on there and assume without loss of generality that $b$ is $*$-cyclic vector for $U$, meaning that $\spa\{U^n b:n\in\Z\}=\cH$. 

Then by the Spectral Theorem $U$ is unitarily equivalent to the multiplication by the independent variable
$\xi$ in $L^2(\mu)= L^2(\T,\mu)$, where $\mu$ is a spectral measure of $U$.  As in the self-adjoint case we fix a spectral measure $\mu$ to be the spectral measure corresponding to the vector $b$, so $\mu$ is a probability measure and the vector $b$ in the spectral representation is given by the function $\1$. 

So, as before let us assume that $U$ is not just unitarily equivalent, but \emph{is} a multiplication operator $M_\xi$ by the independent variable $\xi$ in $L^2(\mu)=L^2(\T, \mu)$, $\mu(\T)=1$ and the rank unitary perturbations $U_\alpha$ are given by \eqref{U_gamma-SA} with $b= \1$. 

It is not hard to show that if $b$ is $*$-cyclic for $U$ then it is also $*$-cyclic for $U_\alpha=U_{b,\alpha}$, so $U_\alpha$ is unitarily equivalent to the multiplication $M_z$ by the independent variable $z$ in $L^2(\mu_\alpha)$. We take for $\mu_\alpha$ the spectral measure corresponding to the vector $b$, so $b=\1$ in the spectral representation of $U_\alpha$ in $L^2(\mu_\alpha)$.

%


Under these assumptions we want to describe the unitary operator giving the unitary equivalence between $U_\alpha$ and its spectral representation of, i.e.~the unitary operator $\cV_\alpha: L^2(\mu)\to L^2(\mu_\alpha)$ such that $\cV_\alpha \1= \1$ and 
%
\begin{align}
\label{ComRel-02}
\cV_\alpha U_\alpha   = M_z \cV_\alpha .
\end{align}

In Theorem 8.1 of \cite{LT} we proved:

\begin{theo}[Representation Theorem]\label{t-repr-V-unitary}
Let $\cV_\alpha: L^2(\mu)\to L^2(\mu_\alpha)$ be a unitary operator satisfying \eqref{ComRel-02} and such that $\cV_\alpha \ID =\ID$ (which means that $\mu_\alpha$ is the spectral measure of $U_\alpha$ corresponding to the cyclic vector $b$, $b(\xi)\equiv\ID$). Then 
\begin{equation}
\label{repr-V-unitary}
\cV_\alpha f (z)= f(z) +(1-\alpha)\int_\T \frac{f(\xi)-f(z)}{1-\bar\xi z}\,d\mu(\xi)\qquad
\text{for all }f\in C^1(\T).
\end{equation}
\end{theo}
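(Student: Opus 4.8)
The plan is to follow the elementary argument sketched after the self-adjoint Representation Theorem~\ref{t-repr-V}. The point is that both $\mu$ and $\mu_\alpha$ now live on the compact set $\T$, so trigonometric polynomials play the role that polynomials played in the proof of \eqref{repr-V}, and there are no unbounded operators to worry about; the whole computation is routine once the right recursion is written down. The steps, in order, are: (i) rewrite the perturbation and the intertwining relation in the spectral representation to obtain a single recursion; (ii) unfold that recursion on Laurent monomials $\xi^n$, $n\in\Z$, and recognize the Cauchy kernel by summing geometric series; (iii) extend from trigonometric polynomials to $C^1(\T)$ by continuity.

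First I would place $U$ in its spectral representation, so that $U=M_\xi$ on $L^2(\mu)$, the $*$-cyclic vector $b$ is the function $\1$, and $U^*\1$ is the function $\xi\mapsto\bar\xi$. Then \eqref{U_gamma-SA} becomes $U_\alpha f=\xi f+(\alpha-1)\bigl(\int_\T\xi f\,d\mu\bigr)\1$. Substituting this into the intertwining relation \eqref{ComRel-02} and using $\cV_\alpha\1=\1$ gives, for every $f\in L^2(\mu)$, the single basic identity $\cV_\alpha(\xi f)=M_z\cV_\alpha f+(1-\alpha)\bigl(\int_\T\xi f\,d\mu\bigr)\1$. This one relation pins down $\cV_\alpha$ on all trigonometric polynomials.

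Next I would evaluate $\cV_\alpha$ on Laurent monomials. Taking $f=\xi^{n-1}$ in the basic identity and iterating down to the base case $\cV_\alpha\1=\1$ yields, by induction, $\cV_\alpha(\xi^{n})(z)=z^{n}+(1-\alpha)\sum_{k=1}^{n}z^{\,n-k}\int_\T\xi^{k}\,d\mu$ for $n\ge 0$; taking $f=\xi^{-m}$, solving for $\cV_\alpha(\xi^{-m})$ by applying $M_{\bar z}$ (legitimate since $|z|=1$ $\mu_\alpha$-a.e.), and iterating up from $\cV_\alpha\1=\1$ yields the corresponding formula for $n<0$. On $\T$ one has $1-\bar\xi z=\bar\xi(\xi-z)$, so for $f=\xi^{n}$, $n\in\Z$, the quotient $(f(\xi)-f(z))/(1-\bar\xi z)$ is a finite Laurent polynomial in $\xi$; summing the relevant geometric series identifies its $\mu$-integral with exactly the correction term just obtained. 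Hence \eqref{repr-V-unitary} holds for $f=\xi^{n}$, and by linearity for every trigonometric polynomial.

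Finally I would pass to $C^1(\T)$ by density. The left-hand side of \eqref{repr-V-unitary} depends continuously on $f$ in the $C^1(\T)$-norm because $\cV_\alpha$ is bounded and $\|f\|\ci{L^2(\mu)}\le\|f\|\ci{C(\T)}$ ($\mu$ being a probability measure); the right-hand side does because on $\T$ we have $|1-\bar\xi z|=|\xi-z|$, while $|f(\xi)-f(z)|$ is dominated by a constant multiple of $|\xi-z|$ controlled by the $C^1$-norm of $f$, so the integrand is uniformly bounded and the Cauchy-type integral converges for every $f\in C^1(\T)$. Since trigonometric polynomials are dense in $C^1(\T)$ — e.g.\ the Fej\'er means of $f$ converge to $f$ together with their derivatives — the formula extends from trigonometric polynomials to all of $C^1(\T)$. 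The only steps needing genuine care are the bookkeeping of the geometric sums (matching $(\xi^{n}-z^{n})/(1-\bar\xi z)$ with the right Laurent polynomial in both the positive- and negative-frequency regimes, the latter being slightly less symmetric because it forces one to invert the $M_z$ step) and the check that $1/(1-\bar\xi z)$ integrates against $C^1$ data, which is precisely where the identity $|1-\bar\xi z|=|\xi-z|$ on $\T$ is used; I do not expect a real obstacle.
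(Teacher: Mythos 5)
Your proof is correct and follows essentially the same route as the paper's: derive the one-step recursion from the intertwining relation \eqref{ComRel-02}, unfold it on Laurent monomials, sum the geometric series (using $\bar\xi(\xi-z)=1-\bar\xi z$ on $\T$) to recognize the kernel, and pass to $C^1(\T)$ by a density argument. The only, harmless, deviation is in the negative-frequency step, where you solve the recursion backwards by applying the unitary $M_{\bar z}$, whereas the paper takes adjoints of the intertwining relation to get the analogous forward recursion for $U^*$; both yield \eqref{repr-form-powers-01} for $n\le 0$.
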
 
\begin{proof}
The proof goes similarly to the proof of the self-adjoint case (Theorem \ref{t-repr-V} above) for the bounded perturbations sketched above. Namely, using ``linear algebra'' notation, i.e.~identifying $b\in\cH$ with the operator $b:\C\to\cH$, $b(\alpha)= \alpha b$ and denoting by $b^*$  its  adjoint $b^*\cH\to\C$, $b^*:(x) = (x, b)\ci{\cH}$ we can write 
\[
U_\alpha =U + (\alpha-1) b  b_1^\ast= M_\xi + (\alpha-1) b b_1^\ast,  
\]
where $b_1:=U^*b$. 
Then 
the intertwining relationship \eqref{ComRel-02} gives us
\begin{align}
\label{com-rel-03}
\cV_\alpha U = M_z \cV_\alpha +(1-\alpha) (\cV_\alpha b) b_1^\ast.
\end{align}
Inductively one can show that for $n\ge 0$
\[
\cV_\alpha U^n = M_z^n \cV_\alpha +(1-\alpha) \sum_{k=1}^n M_z^{k-1} (\cV_\alpha b)  \left((U^*)^{n-k} b_1\right)^\ast.
\]
Applying this formula to the function $b\equiv \1\in L^2(\mu)$ and recalling that $(U^n b)(\xi)=\xi^n$, $\cV_\alpha b=\1$, $b_1(\xi)\equiv\xi$, $(U_1^*)^{n-k} b_1\equiv \xi^{n-k+1}$ we obtain summing the geometric series
\begin{align}
\label{repr-form-powers-01}
(\cV_\alpha \xi^n )(z)= z^n +(1-\alpha)\int_\T \frac{\xi^n-z^n}{1-\bar\xi z}\,d\mu(\xi).
\end{align}
The action of $\cV_\alpha$ on $\bar\xi^n$, $n\ge0$ is proved similarly.
Namely, taking the adjoint of the intertwining formula $\cV_\alpha U_\alpha =M_z \cV_\alpha$ and right and left multiplying by $\cV_\alpha$ we get that  $\cV_\alpha U_\alpha^\ast = M_{\bar z} \cV_\alpha$, so
\[
\cV_\alpha U^* = M_{\overline z} \cV_\alpha + (1-\overline\alpha)(\cV_\alpha b_1) b^* .
\]
But that is exactly the intertwining relationship \eqref{com-rel-03} with $U^*=U^{-1}$ instead of $U$ and $M_{\overline z} = M_{z^{-1}}$ instead of $M_z$. So applying the same reasoning as above we get that \eqref{repr-form-powers-01} holds also for $n\le 0$, and therefore for all trigonometric polynomials. 

A standard approximation argument concludes the proof. 
\end{proof}


A converse of the Representation Theorem is also true in the unitary setting. Under mild conditions bounded injective operators $\cV:L^2(\mu)\to L^2(\nu)$ that are given by \eqref{repr-V-unitary} induce a Clark family. More precisely, we quote Theorem 8.4 of \cite{LT}.

\begin{theo}[Rigidity Theorem]
Let a probability measure $\mu$ on $\T$ be supported on at least two distinct points.
Let $\alpha\in \T\setminus \{1\}$, and let $\cV f$ be defined for $C^1$ functions $f$ by the right hand side of  \eqref{repr-V}.

Assume $\cV$ extends to a bounded operator from $L^2(\mu)$ to $L^2(\nu)$ and assume $\Ker \cV=\{0\}$.

Then there exists a function $h$ such that $1/h\in L^\infty(\nu)$, and $M_h \cV$ is a unitary operator from $L^2(\mu)\to L^2(\nu)$ (equivalently, that $\cV:L^2(d\mu)\to L^2( |h|^{2}\,d\nu)$ is unitary). 

Moreover, the measure $|h|^2 \nu$ is exactly the Clark measure $\mu_\alpha$ defined as above, and $\cV$ treated as the operator $L^2(\mu)\to L^2(\mu_\alpha)$ is exactly the operator $\cV_\alpha$ from Theorem \ref{t-repr-V-unitary}. 
%
\end{theo}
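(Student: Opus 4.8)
The plan is to reduce the statement to two structural facts about the operator $\cV$ defined by the right-hand side of \eqref{repr-V-unitary}: that $\cV$ intertwines the perturbed operator $U_\alpha$ (acting in $L^2(\mu)$) with the multiplication $M_z$ on $L^2(\nu)$ and fixes the constant $\1$; and that these properties, together with the boundedness and injectivity hypotheses, force $\cV$ to be a renormalization of the spectral representation $\cV_\alpha$ from Theorem \ref{t-repr-V-unitary}. First I would verify $\cV\1=\1$, which is immediate since the integrand in \eqref{repr-V-unitary} vanishes for constant $f$. Next, for a trigonometric polynomial $f$ I would compute $\cV(\xi f)$ and compare it with $z\,\cV f$: the difference of the two integrands is $f(\xi)\,\frac{\xi-z}{1-\bar\xi z}$, and since on $\T$ one has $\bar\xi=1/\xi$, hence $\frac{\xi-z}{1-\bar\xi z}=\xi$, this collapses to the constant $(1-\alpha)\int_\T \xi f(\xi)\,d\mu(\xi)$. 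Because $U_\alpha f=\xi f+(\alpha-1)\bigl(\int_\T \xi f(\xi)\,d\mu(\xi)\bigr)\1$ and $\cV\1=\1$, the two constants cancel and $\cV(U_\alpha f)=M_z(\cV f)$ for every trigonometric polynomial; since such $f$ are dense in $L^2(\mu)$ and $\cV$, $U_\alpha$, $M_z$ are bounded, the relation $\cV U_\alpha=M_z\cV$ holds on all of $L^2(\mu)$.

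Next I would analyze $B:=\cV^*\cV$. Taking adjoints in $\cV U_\alpha=M_z\cV$ and using that $U_\alpha$ and $M_z$ are unitary gives $U_\alpha\cV^*=\cV^*M_z$, so $B$ commutes with $U_\alpha$. Since $\1$ is $*$-cyclic for $U_\alpha$ and, by the Spectral Theorem, $\cV_\alpha$ realizes $U_\alpha$ as $M_z$ on $L^2(\mu_\alpha)$ with $\cV_\alpha\1=\1$, the commutant of $U_\alpha$ is a maximal abelian algebra; hence $\cV_\alpha B\cV_\alpha^*=M_\psi$ for some nonnegative $\psi\in L^\infty(\mu_\alpha)$ with $\|\psi\|_\infty=\|\cV\|^2$. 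To identify $\psi$ I would test on $\{U_\alpha^n\1:n\in\Z\}$: using $\cV U_\alpha^n\1=M_z^n\cV\1=z^n$ and $\cV_\alpha U_\alpha^n\1=z^n$ one gets $\int_\T z^{n-m}\psi\,d\mu_\alpha=\langle BU_\alpha^n\1,U_\alpha^m\1\rangle=\langle z^n,z^m\rangle_{L^2(\nu)}=\int_\T z^{n-m}\,d\nu$ for all $n,m$, so by uniqueness of trigonometric moments $d\nu=\psi\,d\mu_\alpha$; in particular $\nu\ll\mu_\alpha$. Finally $\Ker\cV=\Ker B$, and $\Ker M_\psi=\{0\}$ exactly when $\psi>0$ $\mu_\alpha$-a.e., so the injectivity hypothesis gives $\psi>0$ $\mu_\alpha$-a.e.

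To conclude I would set $h:=\psi^{-1/2}$ on $\{\psi>0\}$ and $h:=1$ on the $\nu$-null set $\{\psi=0\}$; then $1/h=\psi^{1/2}\in L^\infty(\nu)$, and $|h|^2\nu=\psi^{-1}\psi\,\mu_\alpha=\mu_\alpha$, where the last identity uses $\mu_\alpha(\{\psi=0\})=0$ from the previous step. Hence $M_h$ is a unitary from $L^2(\mu_\alpha)$ onto $L^2(\nu)$. Moreover, formula \eqref{repr-V-unitary} is literally the same one defining $\cV_\alpha$, so $\cV$, regarded as a map into $L^2(\mu_\alpha)=L^2(|h|^2\nu)$, agrees on the dense set $C^1(\T)$ with the unitary $\cV_\alpha$ and therefore equals it; composing, $M_h\cV=M_h\cV_\alpha$ is unitary from $L^2(\mu)$ to $L^2(\nu)$, and $|h|^2\nu=\mu_\alpha$ is the Clark measure, which is the assertion. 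The equivalent formulation "$\cV:L^2(d\mu)\to L^2(|h|^2\,d\nu)$ is unitary" is then just the statement $\cV=\cV_\alpha$.

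I expect the main obstacle to be the middle step: pinning down that the commutant of $U_\alpha$ is exactly the algebra of multiplications — which rests on $\1$ being $*$-cyclic for the \emph{perturbed} operator $U_\alpha$, not merely for $U$ — and then correctly translating the two hypotheses on $\cV$ (bounded, injective) into the two properties of $\psi$ (membership in $L^\infty$, strict positivity a.e.) that are precisely what the conclusion requires. In particular, one must notice that no lower bound on $\psi$, i.e. no a priori equivalence $\nu\sim\mu_\alpha$, needs to be assumed: injectivity alone already forces $\mu_\alpha(\{\psi=0\})=0$, and that is exactly enough to make $|h|^2\nu$ equal to $\mu_\alpha$ on the nose. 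The approximation passages from $C^1(\T)$ to $L^2(\mu)$ and the bookkeeping among the three measures $\mu$, $\nu$, $\mu_\alpha$ are routine but should be carried out with care.
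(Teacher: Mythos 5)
Your proof is correct and, as far as one can compare (the paper itself only quotes this result from \cite{LT} without reproducing the proof), it follows the standard route used there and in the self-adjoint analogue of \cite{LiawTreil2009}: derive the intertwining $\cV U_\alpha = M_z\cV$ directly from the formula, note that $\cV^*\cV$ lies in the commutant of the unitary $U_\alpha$, which is maximal abelian because $\1$ is $*$-cyclic for $U_\alpha$, hence equals multiplication by a weight $\psi$ in the spectral representation, identify $d\nu=\psi\,d\mu_\alpha$ by matching Fourier coefficients, and convert injectivity into $\psi>0$ $\mu_\alpha$-a.e. The one step worth writing out in full is the final identification $M_h\cV=M_h\cV_\alpha$ on all of $L^2(\mu)$ (since $M_h$ need not be bounded on $L^2(\nu)$ one should pass to a.e.\ convergent subsequences), which you correctly flag as routine bookkeeping.
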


As in the self-adjoint setting, the Representation Theorem reminds us of singular integral operators. 
Acting as in the self-adjoint case we  show that the kernel $K(z,\xi)=1(1-\overline \xi z)$ is restrictedly bounded (see Definition \ref{d:RestrBd} below). Again, Theorem \ref{t:reg-bd-01} and Remark \ref{remark} show the uniform boundedness of the regularization of the singular integral operator.

\begin{theo}
For the Clark measures $\mu$ and $\mu_\alpha$, the operators $T_r:L^2(\mu)\to L^2(\mu_\alpha)$ given by
\[
T_r f(z) :=\int_\T \frac{f(\xi)d\mu(\xi)}{1- r\overline\xi z}
\]
are uniformly (in $r\in\R_+\setminus\{1\}$) bounded.
\end{theo}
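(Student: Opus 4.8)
The plan is to treat this as the unitary mirror of Theorem \ref{t-UNIF} and feed the relevant data into the regularization machinery of Section \ref{s-SIO}. Concretely, I would establish three facts about the Clark measures $\mu$ and $\mu_\alpha$: (i) the kernel $K(z,\xi)=(1-\overline\xi z)^{-1}$ is restrictedly bounded as a kernel $L^2(\mu)\to L^2(\mu_\alpha)$ in the sense of Definition \ref{d:RestrBd}; (ii) $\mu$ and $\mu_\alpha$ have no common atoms; and (iii) $K_r(z,\xi):=(1-r\overline\xi z)^{-1}$ is an admissible (``reasonable'') regularization of $K$. Once these are in place, Theorem \ref{t:reg-bd-01} together with Remark \ref{remark} immediately gives $\sup_{r\in\R_+\setminus\{1\}}\|T_r\|\ci{L^2(\mu)\to L^2(\mu_\alpha)}<\infty$, so no further quantitative work is needed.

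For (i) I would argue exactly as in the self-adjoint case (the displayed identity for $(V_\alpha f,g)$ just before Theorem \ref{t-UNIF}): take $f\in C^1(\T)$ and bounded $g\in L^2(\mu_\alpha)$ with disjoint supports; in the Representation Theorem \ref{t-repr-V-unitary} the ``diagonal'' contributions $f(z)\overline{g(z)}$ and $-(1-\alpha)\int f(z)(1-\overline\xi z)^{-1}\overline{g(z)}\,d\mu(\xi)$ both vanish since $f\equiv0$ on $\supp g$, which leaves
\[
(\cV_\alpha f,g)\ci{L^2(\mu_\alpha)}=(1-\alpha)\iint_{\T\times\T}\frac{f(\xi)\,\overline{g(z)}}{1-\overline\xi z}\,d\mu(\xi)\,d\mu_\alpha(z).
\]
A routine approximation extends this to all bounded $f,g$ with separated supports, and since $\cV_\alpha$ is unitary and $\alpha\ne1$ this yields the restricted bound with constant $1/|1-\alpha|$. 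For (ii) I would invoke the analogue of the Aronszajn--Donoghue theorem for unitary rank one perturbations: the singular parts of $\mu$ and $\mu_\alpha$ are mutually singular, so in particular they carry no common point masses. For (iii) I would check directly that for $r\ne1$ one has $|1-r\overline\xi z|\ge|1-r|>0$ on $\T\times\T$, so each individual $T_r$ is bounded; that $K_r\to K$ pointwise off the diagonal $\{z=\xi\}$ as $r\to1$; and that $K_r$ satisfies the size and near-diagonal regularity bounds demanded of a regularization, with the two one-sided approaches $r\to1^-$ and $r\to1^+$ playing the roles of $\e\to0^+$ and $\e\to0^-$ in the self-adjoint setting (radial boundary values of the Cauchy transform of $f\mu$ from inside, resp.\ from outside, $\D$). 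The parameter range away from $1$ is harmless: $\|T_r\|\to0$ as $r\to\infty$, and $\|T_r\|$ stays bounded near $r=0$ because $T_0f=(f,\1)\ci{L^2(\mu)}$ and $\mu_\alpha(\T)=1$.

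The step I expect to require the most care is (iii): one has to line $K_r$ up against the exact list of hypotheses in the definition of a reasonable regularization in Section \ref{s-SIO} --- in particular the uniform size estimate and the correct control of $K-K_r$ near the diagonal --- and one has to treat the two sides $r<1$ and $r>1$ symmetrically, even though they correspond to approaching $\T$ from opposite sides. The restricted boundedness in (i), by contrast, is essentially free once the Representation Theorem is available, and (ii) is standard.
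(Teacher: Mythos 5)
Your proposal follows the paper's argument essentially verbatim: restricted boundedness of $1/(1-\overline\xi z)$ with constant $1/|1-\alpha|$ from the unitarity of $\cV_\alpha$ via the bilinear form on separated supports, absence of common atoms for $\mu$ and $\mu_\alpha$, and then Theorem \ref{t:reg-bd-01} together with Remark \ref{remark}. The only caveat is that your step (iii) is already subsumed by Remark \ref{remark} and is slightly mischaracterized: the paper does not check size or near-diagonal regularity of $K_r$ (the kernels $1/(1-r\overline\xi z)$ do not fit the $C^\infty$-regularizer framework at all), but instead shows in the ``Cauchy type regularizations'' subsection that $(1-\overline\xi z)/(1-r\overline\xi z)$ is a Schur multiplier with norm at most $2$ uniformly in $r$ (its Fourier coefficients on $\Z$ have $\ell^1$-norm $1+r\le 2$ for $r<1$ and $1+r^{-1}\le 2$ for $r>1$), after which the restricted-to-actual boundedness theorem finishes the proof.
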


An analog of Lemma \ref{l:nontan_f.mu} holds for the unit circle with essentially the same proof (for a different proof, see \cite[Proposition 8.2]{LT}), so the limits $\lim_{r\to 1^{\mp}} T_r f(z)$ exist $\mu_\alpha$-a.e.~on $\T$.
So we can define operators $T_{\pm}$ as the $\mu_\alpha$-a.e. limits
\begin{align*}
T_\pm f(z) &:=\lim_{r\to 1^\mp} T_r f (z), \qquad z\in\T, 
\intertext{or, equivalently, as w.o.t.~limits}
T_\pm f &:=\text{w.o.t.-}\lim_{r\to 1^\mp} T_r f .
\end{align*}

Replacing the kernel in  \eqref{repr-V-unitary} by $1/(1-r\overline\xi z)$ and  taking the limit as $r\to 1^\mp$,  
we get  an alternative formula for  $\cV_\alpha$.

\begin{theo}
Let $\mu$ and $\mu_\alpha$ be the spectral measures of $U$ and $U_\alpha$ respectively, and let $T_\pm=\text{w.o.t.-}\lim_{r\to 1^\mp} T_r $ (the existence of the limit was just discussed). Then $\cV_\alpha$ has the alternative representation 
\begin{align*}
 \cV_\alpha f  &= [\ID - (1-\alpha)T_\pm\ID] f +(1-\alpha)T_\pm f
 \qquad\forall f\in L^2(\mu).
\end{align*}
\end{theo}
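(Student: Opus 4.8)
The plan is to follow the proof of the self-adjoint Theorem~\ref{reg-2}, with the radial regularization playing the role of the $i\varepsilon$-shift. First I would introduce, for $r\in\R_+\setminus\{1\}$, the regularized operators $\cV_\alpha^r\colon L^2(\mu)\to L^2(\mu_\alpha)$ obtained by replacing the kernel $1/(1-\bar\xi z)$ in \eqref{repr-V-unitary} by $1/(1-r\bar\xi z)$:
\[
\cV_\alpha^r f(z) = f(z) + (1-\alpha)\int_\T \frac{f(\xi)-f(z)}{1-r\bar\xi z}\,d\mu(\xi) = [\ID-(1-\alpha)T_r\ID]f(z) + (1-\alpha)T_r f(z).
\]
The first step is to check that for $f\in C^1(\T)$ one has $\cV_\alpha^r f\to\cV_\alpha f$ uniformly on $\T$, hence also in $L^2(\mu_\alpha)$, as $r\to 1^{\mp}$. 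This rests on the elementary identities $|1-\bar\xi z|=|\xi-z|$ and $|1-r\bar\xi z|^2=(1-r)^2+r|\xi-z|^2$, valid for $\xi,z\in\T$: combined with $|f(\xi)-f(z)|\le \|f'\|_\infty|\xi-z|$ they give $\big|(f(\xi)-f(z))/(1-r\bar\xi z)\big|\le \|f'\|_\infty/\sqrt r$, a bound uniform for $r$ near $1$, so dominated convergence applies ($\mu$ being a probability measure) and the limit is the integral defining $\cV_\alpha f$.

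The second ingredient is already available: the uniform boundedness of the $T_r$ (the preceding theorem) together with the circle analog of Lemma~\ref{l:nontan_f.mu} yields $T_r\to T_\pm$ in the weak operator topology (w.o.t.) as $r\to 1^{\mp}$, and moreover $T_r f\to T_\pm f$ holds $\mu_\alpha$-a.e.\ for every $f\in L^2(\mu)$; applying this to $f=\ID$ gives $T_r\ID\to T_\pm\ID$ $\mu_\alpha$-a.e. Now fix $f\in C^1(\T)$ and pass to the limit $r\to 1^{\mp}$ in
\[
\cV_\alpha^r f = g_r + (1-\alpha)T_r f, \qquad g_r := [\ID-(1-\alpha)T_r\ID]f.
\]
The left-hand side converges to $\cV_\alpha f$ in norm by the first step, and $(1-\alpha)T_r f\to (1-\alpha)T_\pm f$ weakly; hence $g_r=\cV_\alpha^r f-(1-\alpha)T_r f$ is a bounded sequence in $L^2(\mu_\alpha)$. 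Since $g_r$ also converges $\mu_\alpha$-a.e.\ to $g:=[\ID-(1-\alpha)T_\pm\ID]f$ (by the $\mu_\alpha$-a.e.\ convergence of $T_r\ID$), every weakly convergent subsequence of $g_r$ has weak limit $g$, so $g_r\to g$ weakly. Passing to the limit gives $\cV_\alpha f = [\ID-(1-\alpha)T_\pm\ID]f + (1-\alpha)T_\pm f$ for all $f\in C^1(\T)$. Finally, since $\cV_\alpha$ is unitary and $(1-\alpha)T_\pm$ is bounded (a w.o.t.\ limit of uniformly bounded operators), the difference $\cV_\alpha-(1-\alpha)T_\pm$ is a bounded operator $L^2(\mu)\to L^2(\mu_\alpha)$ which on the dense subspace $C^1(\T)$ acts as multiplication by $\ID-(1-\alpha)T_\pm\ID$; by density this identifies it with $[\ID-(1-\alpha)T_\pm\ID]$ on all of $L^2(\mu)$, which is the assertion.

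The step I expect to require the most care is the passage to the limit in the ``multiplier'' term $g_r=[\ID-(1-\alpha)T_r\ID]f$: one cannot simply invoke a uniform $L^\infty(\mu_\alpha)$ bound on $T_r\ID$, since $T_r\ID$ blows up near the singular support of $\mu_\alpha$. The resolution above avoids that bound altogether --- $g_r$ is seen to be $L^2(\mu_\alpha)$-bounded because it is a difference of two sequences bounded for independent reasons, and it converges $\mu_\alpha$-a.e., hence it converges weakly to its pointwise limit; correspondingly the limiting operator $[\ID-(1-\alpha)T_\pm\ID]$ is bounded not because its symbol is a bounded function but because the potential unboundedness is absorbed by the requirement that the argument lie in $L^2(\mu)$ rather than $L^2(\mu_\alpha)$. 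Everything else --- the dominated-convergence estimate, the density of $C^1(\T)$ in $L^2(\mu)$, and the ``bounded operators agreeing on a dense set'' argument --- is routine.
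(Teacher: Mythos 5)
Your proposal is correct and follows essentially the same route as the paper: the paper proves the self-adjoint analogue (Theorem \ref{reg-2}) by introducing the regularized operators, checking uniform and $L^2(\mu_\alpha)$ convergence on $C^1$ functions, invoking the uniform bounds on the regularized Cauchy transforms, and passing to w.o.t.\ limits, and it treats the circle case as the same argument with $1/(1-r\bar\xi z)$ in place of $1/(s-t+i\e)$. You simply supply more detail than the paper does at the two points it glosses over (the dominated-convergence bound $|1-r\bar\xi z|^2=(1-r)^2+r|\xi-z|^2$ and the weak convergence of the multiplier term via a.e.\ convergence plus $L^2$-boundedness), and both of those are handled correctly.
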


%

\subsection{How unstable can the singular spectrum become?}\label{s-EXA}
By the Kato-Rosenblum theorem we know that the absolutely continuous spectrum remains invariant  under rank one perturbations. But under a rank one perturbation by a cyclic vector, the singular perturbation can change type, as was shown by an example by Donoghue. So the question becomes: To which extent may the spectral properties of the measures $\mu_\alpha$ vary as we change $\alpha$? Much work has been done and many interesting examples were discovered, several are included in \cite{SIMREV}. 

First of all notice that in the context of rank one perturbations for pure point and the singular continuous spectrum can behave quite different. For example, it is possible for $A_\alpha$ to have purely singular continuous spectrum on the interval $[0,1]$ for all $\alpha$. But the same behavior is not possible for pure point spectrum. In fact, the perturbations $A_\alpha$ are pure point for all $\alpha$ if and only if the spectrum is countable.

Another question concerns the type of parameter sets that allow dense singular embedded (in absolutely continuous) spectrum. For several years, all examples exhibited dense singular embedded spectrum only for a Lebesgue measure zero set of parameters $\alpha$.
It came as a surprise when Del Rio, Fuentes and Poltoratskii \cite{riofuepolt2002} proved the existence of a family of rank one perturbations with dense absolutely continuous spectrum and dense singular spectrum for almost every parameter $\alpha$ in an arbitrary (previously given) set $B\subset\R$ and with purely absolutely continuous spectrum for almost every $\alpha \in\R\backslash B$. Their proof uses Clark theory. Via a complicated construction they show the existence of a characteristic function for which the corresponding family of rank one unitary perturbations has the desired properties. In fact, it is possible to produce most any type of singular spectrum in this setting, see \cite{riofuepolt2}. In the latter reference, the following open problem is formulated: Fix an interval $I\subset \R$ and subset $B\subset \R$. Can one find a family of measures $\mu_\beta$ so that $(\mu_\beta)\ti{s}(J)>0$ if and only if $\beta \in B$ and $(\mu_\beta)\ti{ac}(J)>0$ for all $\beta \in \R$ and for every subset $J\subset I$?

A class of examples is concerned with the question of how unstable the spectral type may be, if we do not have absolutely continuous part. A result of Del Rio, Makarov and Simon \cite{denseGdelta} which was independently proved by Gordon \cite{Gordon1997} states the following. Consider $I\subset\supp\mu$ closed and not a singleton. If $\mu_\alpha|_I$ is singular, then the set of $\alpha$'s for which $\mu_\alpha$ is purely singular continuous is a dense $G_\delta$ set.

A converse to this result was presented by C.~Sundberg \cite{sundberg}: For any closed subinterval $I$ which is not a singleton and any $G_\delta$ subset of $\R$, there exists a family of measures corresponding to a family of rank one perturbations such that $\supp \mu\subset I$, $\mu_\alpha$ is purely singular continuous for $\alpha \in G$ and $\mu_\alpha$ is pure point for $\alpha \in \R\backslash G$. In the proof, Sundberg applies Clark theory. He constructs the characteristic function by defining a function on a Riemann surface $\mathcal R$ over the disk $\D$, and then applies the projection from $\mathcal R$ to $\D$.

\subsection{Behavior of the singular continuous spectrum}\label{ss-SC}
To this day, a characterization of the singular continuous part of the perturbed operator's spectral measure in terms of the unperturbed operator remains an open problem.
Several sufficient conditions for the absence of singular continuous spectrum are known (see, for example, \cite{cimaross, SIMREV}). Within the realm of our methods, an application of Theorem \ref{t-UNIF} empowers us with control over singular spectrum of the perturbed operator.

\begin{lem}[Lemma 4.4 of \cite{LiawTreil2009}]
Operators $A_\alpha$, $\alpha\in \R\setminus \{0\}$, have a pure absolutely continuous spectrum on a closed interval $I$, if
\begin{align*}
\int_0^\e x^{-2} w^\ast_I dx =\infty.
\end{align*}
Here $d\mu = w dx+ d\mu\ti{s}$ ($w\in L^1(dx)$) is the Lebesgue decomposition, and $w^\ast_I$ denotes the increasing rearrangement of $w$ on $I$.
\end{lem}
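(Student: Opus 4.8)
The statement asserts that if the increasing rearrangement $w_I^\ast$ of the a.c.~density $w$ satisfies $\int_0^\e x^{-2} w_I^\ast(x)\, dx = \infty$, then each $A_\alpha$ ($\alpha\neq 0$) has purely absolutely continuous spectrum on $I$. The plan is to rule out the singular part of $\mu_\alpha$ on $I$ using the representation $V_\alpha: L^2(\mu)\to L^2(\mu_\alpha)$ from Theorem \ref{t-repr-V} together with the uniform boundedness of the regularizations $T_\e$ from Theorem \ref{t-UNIF}. The key point is that if $(\mu_\alpha)\ti s$ had positive mass on $I$, then (by the Aronszajn--Krein formula and the analysis in the proof of Lemma \ref{l:nontan_f.mu}) the non-tangential boundary values of $F = R\mu$ equal $-1/\alpha$ at $(\mu_\alpha)\ti s$-a.e.~point of $I$; in particular $F$ is \emph{finite} on a set of positive $(\mu_\alpha)\ti s$-measure. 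I would derive a contradiction by showing that the growth hypothesis on $w_I^\ast$ forces $\im F(x+i0) = +\infty$ (Lebesgue-a.e.~limit from above not relevant here — rather, $\liminf$ along the approach) precisely on the set carrying $(\mu_\alpha)\ti s$, so that set must be empty.

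First I would reduce to estimating $\im F(x+i\e) = \pi P_\e[\mu](x)$, the Poisson integral, from below. Write $d\mu = w\,dx + d\mu\ti s$; discarding the (nonnegative) singular contribution and restricting the integration to $I$, we get $\im F(x+i\e) \ge \int_I \frac{\e}{(x-t)^2+\e^2} w(t)\,dt$ for $x\in I$. The heart of the matter is the elementary but crucial rearrangement estimate: $\int_I \frac{\e}{(x-t)^2+\e^2} w(t)\,dt \gtrsim \int_0^{\ell(I)} \frac{\e}{s^2+\e^2} w_I^\ast(s)\,ds$ for a.e.~$x\in I$ (away from the endpoints), because the Poisson-type kernel $s\mapsto \e/(s^2+\e^2)$ is decreasing in $|s|$, so pairing it with the increasing rearrangement $w_I^\ast$ minimizes the integral, hence gives a lower bound valid for all $x$. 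Then I would show that the divergence $\int_0^\e x^{-2} w_I^\ast(x)\,dx = \infty$ implies $\int_0^{\ell(I)} \frac{\e}{s^2+\e^2} w_I^\ast(s)\,ds \to \infty$ as $\e\to 0^+$: indeed $\frac{\e}{s^2+\e^2} \ge \frac{1}{2}\min(1/\e, \e/s^2)$, and the tail $\int_\e^{\ell(I)} \e s^{-2} w_I^\ast(s)\,ds$ already tends to $+\infty$ by monotone convergence combined with the hypothesis (after a routine Fubini / layer-cake manipulation showing $\int_0^\e s^{-2} w_I^\ast = \infty \iff \lim_{\e\to 0}\int_\e^{\delta} s^{-2} w_I^\ast = \infty$, up to the obvious adjustments since $\int_\e^\delta \ge \e\int_\e^\delta s^{-2}/\delta$... more carefully one uses that $\int_{2\e}^{\ell(I)} \e s^{-2} w_I^\ast(s)\,ds \ge \tfrac14 \int_{2\e}^{\ell(I)} s^{-1}\cdot (s^{-1} w_I^\ast(s))\,ds$ and $s^{-1}w_I^\ast(s)$ is not integrable near $0$). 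The upshot: $\im F(x+i\e) \to +\infty$ as $\e\to 0^+$, uniformly for $x$ in a fixed subinterval $I'\Subset I$.

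Having established $\im F(x+i0^+) = +\infty$ for every $x\in I'$, I would combine this with the fact, extracted from the proof of Lemma \ref{l:nontan_f.mu}, that $(\mu_\alpha)\ti s$ restricted to $I$ is carried by the set where the non-tangential boundary value of $F$ equals the finite number $-1/\alpha$; since these two sets are disjoint (one requires $\im F \to \infty$, the other $\im F \to \im(-1/\alpha) = 0$), we conclude $(\mu_\alpha)\ti s(I') = 0$, and exhausting $I$ by such $I'$ gives $(\mu_\alpha)\ti s(I) = 0$. To handle pure point mass possibly sitting at the endpoints of $I$ and to upgrade ``no singular part'' to ``purely a.c.'', note that an atom of $\mu_\alpha$ in the interior of $I$ would be a point where $\im F_\alpha$ has a $+\infty$ non-tangential limit and hence (again via Aronszajn--Krein) a point where $1+\alpha F \to 0$, i.e.~$F \to -1/\alpha$ finite, contradicting $\im F\to+\infty$; so there are no atoms in $I'$ either, and finitely or countably many endpoint adjustments are harmless for the open interval. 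Finally, since $A_\alpha \cong M_s$ on $L^2(\mu_\alpha)$, absence of the singular part of $\mu_\alpha$ on $I$ is exactly pure a.c.~spectrum there.

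\textbf{Main obstacle.} The delicate step is the rearrangement lemma and the quantitative passage from $\int_0^\e x^{-2} w_I^\ast\,dx = \infty$ to $\im F(x+i\e)\to\infty$ uniformly on compact subintervals — one must be careful that the rearrangement bound genuinely holds \emph{pointwise for a.e.}~$x\in I'$ (not merely on average), which is where the monotonicity of the Poisson kernel in $|x-t|$ and a Hardy--Littlewood type inequality for the decreasing rearrangement of the kernel against $w$ are used; and one must verify the kernel comparison $\e/(s^2+\e^2) \asymp \min(\e^{-1}, \e s^{-2})$ is applied on the correct ranges so that the divergent tail is captured. The boundary-value dichotomy ($\im F\to\infty$ versus $F\to -1/\alpha$) is then a clean logical contradiction, and the reduction to $A_\alpha$ spectral theory is immediate, so these are not the bottleneck.
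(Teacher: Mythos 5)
There is a genuine gap, and it is fatal to the strategy rather than a repairable detail. Your plan hinges on showing that $\int_0^\e x^{-2}w^*_I\,dx=\infty$ forces $\im F(x+i\e)\to+\infty$ for $x$ in compact subintervals of $I$. This is false, and in fact impossible. First, the quantitative step fails: take $w^*_I(s)=s$ (e.g.\ $w(t)=t$ on $I=[0,1]$). Then $\int_0^\e s^{-2}w^*_I(s)\,ds=\int_0^\e s^{-1}ds=\infty$, so the hypothesis holds, yet your lower bound
\[
\int_0^{|I|}\frac{\e}{s^2+\e^2}\,w^*_I(s)\,ds=\frac{\e}{2}\log\frac{|I|^2+\e^2}{\e^2}\asymp \e\log(1/\e)\longrightarrow 0,
\]
so the claimed divergence of the tail $\e\int_\e^{|I|}s^{-2}w^*_I(s)\,ds$ does not follow (it tends to $0$ here). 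Second, the intermediate goal itself cannot hold: by Fatou's theorem the non-tangential limit of $\pi^{-1}\im F$ equals $w(x)$ at Lebesgue-a.e.\ $x$, so $\im F(x+i0^+)=+\infty$ on a subinterval would force $w=\infty$ a.e.\ there, contradicting $w\in L^1$. The hypothesis of the lemma is compatible with $w$ being finite and even vanishing at a point of $I$, so no pointwise blow-up of the Poisson integral of $\mu$ is available; the lemma is genuinely about ruling out singular mass of $\mu_\alpha$ hiding on Lebesgue-null subsets of $\{w>0\}$, which the Poisson integral of $\mu$ cannot see.

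The paper's route is different in exactly the place where yours breaks. One argues by contradiction from $(\mu_\alpha)\ti{s}|_I\ne 0$ and estimates the \emph{full Cauchy transform of the perturbed measure} $K\mu_\alpha$, not $\im F=\im R\mu$. The main ingredient is the classical weak-type lower bound: if $\tau\ti{s}|_I\ne0$ then $|\{x\in I:|K\tau(x)|>t\}|\ge C/t$ for large $t$, so the decreasing rearrangement of $K\mu_\alpha$ on $I$ is $\gtrsim C/s$. Pairing this with the increasing rearrangement of $w$ (the same Hardy--Littlewood inequality you invoke, but applied to $|K\mu_\alpha|^2$ against $w$ rather than to the Poisson kernel) gives
\[
\int_I|K\mu_\alpha|^2\,w\,dx\;\ge\;C^2\int_0 x^{-2}w^*_I(x)\,dx=\infty,
\]
which contradicts the uniform two-weight bound of Theorem \ref{t-UNIF}: the adjoints $T_\e^*:L^2(\mu_\alpha)\to L^2(\mu)$ are uniformly bounded, and applying them to $\1$ shows $K\mu_\alpha\in L^2(w\,dx)$. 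Your rearrangement pairing and your identification of the carrier of $(\mu_\alpha)\ti{s}$ via $F\to-1/\alpha$ are both in the right spirit, but the blow-up that makes the argument work lives in $|K\mu_\alpha|$ on a set of Lebesgue measure $\ge C/t$, not in $\im F$ pointwise; without the weak-type estimate for $K\mu_\alpha$ the hypothesis on $w^*_I$ cannot be brought to bear.
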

This result allows a construction of unperturbed operators $A$ with arbitrary embedded singular spectrum and for which all of the perturbed operators $A_\alpha$, $\alpha\neq 0$ have no embedded singular spectrum.

The main ingredient of the proof is a well known weak type result on the growth of operator $T$.
With
$
K \tau (s): = \int_\R \frac{ d\tau(t)}{s-t + i\e}
$
 for a Borel measure $\tau$ with $
\int_\R \frac{ |d\tau(t)|}{t^2 + 1}<\infty$:
If $I\subset \R$ is a bounded closed interval such that $\tau\ti{s}|\ci{I}\ne 0$, then there exists a $C>0$ such that $|\{|K\tau|>t\}\cap I|\ge C/t$ for large $t$ -- leads to the following sufficient condition for the absence of singular spectrum for the perturbed operators $A_\alpha$.

\section{Singular integral operators}\label{s-SIO}
\subsection{Preliminaries}
The Hilbert transform $T$
\[
Tf(x) =\int_\R\frac{f(y)dy}{x-y}
\]
is an example of what is usually called a \emph{singular integral operator}. ``Singular'' here means that the kernel $K(x,y)$ of the operator is not integrable in $y$ near the diagonal, so in the formal expression $Tf(x) = \int K(x,y)f(y) dy$ the integral is not well defined. 

In the case of Hilbert transform it is very easy to show that the integral in the sense of principal value is well defined for $C^1$ compactly supported functions, so the operator is defined on a dense set in $L^2$ (and $L^p$, $1<p<\infty$). It also can be shown that it  can be extended to a bounded operator there. 

Moreover, it can be shown that the integral in the sense of principal value exists a.e.~for all $f\in L^p$, $1<p<\infty$; the proof is not as easy as for the $C^1$ functions, and is, in fact, quite involved.  

A part of the operator $V_\alpha$ from Theorem \ref{t-repr-V} looks like the Hilbert transform, with the difference that the integration there is with respect to a general Radon measure $\mu$. And what makes things even more complicated, is that the target space is $L^2(\mu_\alpha)$ with $\mu_\alpha$ being a new measure. 

In the theory of singular integral operators, there are several ways to define such an operator rigorously. One of the accepted ways, is what one would call the \emph{axiomatic} approach.  Namely, to define a singular integer operator $T:L^p(\mu)\to L^p(\nu)$ with kernel $K$ we assume that we are given its bilinear form, defined on a dense subset of $L^p(\mu)\to L^{p'}(\nu)$, $1/p+1/p'=1$. The fact that $T$ is an integral operator with kernel $K$ means simply that 
\begin{align}
\label{ker-SIO}
\La Tf, g\Ra_\nu = \int K(x,y) f(y) g(x)d\mu(y) d\nu(x)
\end{align}
for all (say bounded) $f$ and $g$ with separated compact supports. Since the kernel $K$ blows up only on the diagonal $x=y$, the integral above is well defined. Note, that according to this definition the multiplication operator $M_\f$, $M_\f f =\f f$ is an operator with kernel $K(x,y)\equiv 0$. 

Moreover, it can be shown that any bounded singular integral operator with kernel $K\equiv 0$, where kernel is understood in the sense of \eqref{ker-SIO}, is a multiplication operator. So, according to the axiomatic approach, any two bounded singular integral operators differ by a multiplication operator. 

Another way to define the singular integral operator with kernel $K$ is to consider the truncated operators $T_\e$, 
\[
T_\e f(x) = \int_{|x-y|>\e} K(x,y) f(y)dy
\]
which under usual assumptions about kernel $K$ are well defined for bounded functions $f$ with compact support. And we say that the integral operator with kernel $K$ is bounded if all operators $T_\e$ are uniformly bounded. If the operators $T_\e$ are uniformly bounded, we can take w.o.t.~limit of $T_\e$ as $\e\to 0^+$, so in this case $K$ is indeed a kernel of a bounded singular integral operator in the sense of the axiomatic approach. 

Moreover, in all known examples if an axiomatically defined operator $T$ is uniformly bounded then the operators $T_\e$ are uniformly bounded. And as it turns out, this is not a coincidence, but a corollary of a very general fact. 

\subsection{Setup}
In this paper we assume that $\mu$ and $\nu$ are Radon measures on $\R^d$ and that $K$ belongs to $L^2\ti{loc}(\mu\times\nu)$ off the diagonal $x=y$, meaning that for any $x_0\ne y_0$ there exists a neighborhood $G$ of $(x_0, y_0)\in \R^d\times \R^d$ such that $K\1\ci{G} \in L^2(\mu\times\nu)$. Note, that these assumptions are weaker than what is usually assumed about the kernels of singular integral operators. 

The main results are also true for (at least some) locally compact abelian groups, in particular for tori $\T^d$. Also, since everything is local, the results can be modified to hold on smooth manifolds. 

\begin{defn}
\label{d:RestrBd}
Let $K\in L^2\ti{loc}(\mu\times\nu)$ off the diagonal $x=y$. We say that $K$ is $L^p(\mu)\to L^p(\nu)$ restrictedly bounded if for all $f\in L^\infty(\mu)$, $g\in L^\infty(\nu)$ with separated compact supports
\begin{align}
\label{restr-bdd}
\left| \int K(x,y) f(y) g(x) d\mu(y) d\nu(x)\right| \le C \|f\|\ci{L^p(\mu)}\|g\|\ci{L^{p'}(\nu)}. 
\end{align}
The best constant $C$ in \eqref{restr-bdd} is called the $L^p(\mu)\to L^p(\nu)$ restricted bound of $K$, and denoted by $[K]\ci{L^p(\mu)\to L^p(\nu)}\ut{r}$.  

If the exponent $p$ and the measures $\mu$, $\nu$ are fixed, we will skip $L^p(\mu)\to L^p(\nu)$ and simply say \emph{restrictedly bounded}. 
\end{defn}

Going back, we can see that the operator $V_\alpha$ from Theorem \ref{t-repr-V} is a singular integral operator (in the sense of axiomatic approach) with kernel $K(s,t)=\alpha/(s-t)$. Since $V_\alpha$ is a unitary operator $L^2(\mu)\to L^2(\mu_\alpha)$ its norm is $1$ and therefore the kernel $\alpha/(s-t)$ is restrictedly bounded with the $L^2(\mu)\to L^2(\mu_\alpha)$ restricted norm at most $1$. Equivalently, one can say that the $L^2(\mu)\to L^2(\mu_\alpha)$ restricted norm of the kernel $1/(s-t)$ is at most $1/|\alpha|$.  

Similarly, the operator $\cV_\alpha$ from Theorem \ref{t-repr-V-unitary} is a singular integral operator with kernel  $K(z, \xi)= (1-\alpha)/(1-\overline \xi z)$, $z, \xi \in\T$, and the $L^2(\mu)\to L^2(\mu_\alpha)$ restricted norm of the kernel $1/(1-\overline \xi z)$ is at most $1/|1-\alpha|$. 

\subsection{Regularizations of singular kernels} 
\label{s:reg-01}
Let $m:\R^d\to \R$ be a \emph{regularizer}, i.e.~a bounded function which is $0$ in a neighborhood of $0$ and $1$ in a neighborhood of $\infty$. Define the \emph{regularized kernel} $K_\e$ by $K_\e(x,y)=K(x,y) m((x-y)/\e)$. 
The regularized kernels $K_\e$ are in $L^2\ti{loc}(\mu\times \nu)$ so the regularized integral operators $T_\e$, 
\[
T_\e f(x) :=\int K_\e(x,y) f(y) d\mu(y)
\] 
are well defined for bounded compactly supported $f$. 
In particular, if $m(x) = \1\ci{(1, \infty)}(|x|)$ then we get the classical truncation 
\begin{align}
\label{trunc-01}
T_\e f(x) =\int_{|x-y|>\e} K(x,y) f(y) d\mu(y).
\end{align}

If for $1<p<\infty$ operators $T_\e :L^p(\mu)\to L^p(\nu)$ are uniformly bounded, then by taking w.o.t.~limit point as $\e\to 0^+$ we conclude that $K$ is a kernel of a singular integral operator  (in the sense of the axiomatic approach) with kernel $K$, acting $L^p(\mu)\to L^p(\nu)$. 

It turns out that the converse statement is true, even in a stronger sense, if we  assume that  the measures $\mu$ and $\nu$ do not have common atoms. Namely, the following theorem holds, see \cite[Proposition 2.12]{Regularizations2013}. 


\begin{theo}
\label{t:reg-bd-01}
Let a kernel $K$ be  $L^p(\mu)\to L^p(\nu)$ restrictedly bounded, and assume that $\mu$ and $\nu$ do not have common atoms. Then for any regularizer $m\in C^\infty$ the regularized operators $T_\e$ are uniformly (in $\e$) bounded, $\|T_\e\|\ci{L^p(\mu)\to L^p(\nu)} \le C(m)<\infty$. 
\end{theo}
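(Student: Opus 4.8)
The plan is to localize $T_\e$ at the scale $\e$ by means of a cube grid and to split it into a \emph{near-diagonal} part and a \emph{far} part, each of which is reduced to the restricted boundedness of $K$. Since $m\in C^\infty$ is a regularizer, fix $0<r_0<R_0<\infty$ with $m\equiv0$ on $\{|x|<r_0\}$ and $m\equiv1$ on $\{|x|>R_0\}$, and let $\mathcal D$ be the grid of half-open cubes of side $\ell=\ell(\e)$, with $\ell$ chosen so small ($\ell<r_0\e/(2\sqrt d)$ will do) that any two points lying in one cube, or in two touching cubes, are at distance $<r_0\e$. Then $K_\e\equiv0$ on $Q\times Q'$ whenever $Q,Q'\in\mathcal D$ coincide or touch, so in $\langle T_\e f,g\rangle=\sum_{Q,Q'}\langle T_\e(f\1_{Q'}),g\1_Q\rangle$ only pairs with $\dist(Q,Q')>0$ occur. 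Call such a pair \emph{near} if $\dist(Q,Q')\le 2R_0\e$ and \emph{far} otherwise; since $\ell\sim\e$, a near pair has $Q'$ within a bounded number $N=N(r_0,R_0,d)$ of lattice steps of $Q$, whereas on a far pair $|x-y|>2R_0\e$ and hence $K_\e=K$.

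For the near part one groups the near pairs by their relative lattice displacement $\tau$ (at most $N$ values), so that the near contribution to $\langle T_\e f,g\rangle$ is $\sum_\tau\sum_Q\langle T_{K_\e}(f\1_{Q+\tau}),g\1_Q\rangle$, and for each fixed $\tau$ the inner sum is a block-diagonal operator with respect to the tilings $\{Q\}$ and $\{Q+\tau\}$. On each block $Q\times(Q+\tau)$ the two cubes are separated, so the only obstruction to invoking restricted boundedness is the bounded factor $m((x-y)/\e)$ inside the integral. This factor is disposed of by passing to the variables $x/\e,y/\e$, in which the relevant region has a fixed ($\e$-independent) size, and expanding $m((x-y)/\e)$ in an absolutely convergent Fourier series $\sum_n c_n e^{2\pi i\langle n,x\rangle/(P\e)}e^{-2\pi i\langle n,y\rangle/(P\e)}$ with $\sum_n|c_n|\le C(m)$ independent of $\e$ and of the block; each mode separates the variables and is absorbed by restricted boundedness, the modulating exponentials having modulus one. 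Summing over blocks with H\"older's inequality for $\ell^p$ sums (using that $\{Q\}$ and $\{Q+\tau\}$ each tile $\R^d$) and then over the $\le N$ displacements bounds the near part by $C(m)\,[K]^{\mathrm r}\,\|f\|_{L^p(\mu)}\|g\|_{L^{p'}(\nu)}$, uniformly in $\e$. The same argument applied at scale $2^k\e$ shows that any smooth \emph{annular} operator with kernel $K(x,y)\psi((x-y)/(2^k\e))$, $\psi$ supported in a fixed annulus, is $L^p(\mu)\to L^p(\nu)$ bounded with a constant independent of $k$ and $\e$.

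It remains to bound the far part, where $K_\e=K$; replacing the sharp truncation by a smooth one, this is the operator with kernel $K(x,y)\chi((x-y)/\e)$, where $\chi\in C^\infty$ vanishes near $0$ and equals $1$ near $\infty$. A dyadic decomposition in the distance to the diagonal writes it as $\sum_{k\ge0}S_k$ with $S_k$ a smooth annular operator supported in $\{|x-y|\sim2^k\e\}$, each uniformly bounded by the previous paragraph. The difficulty — and, I expect, the main obstacle of the whole argument — is that there are infinitely many scales while restricted boundedness yields \emph{no} gain: the bound on each $S_k$ is independent of $k$, so naive summation over the scales diverges, and for general $p$ there is no Cotlar--Stein almost-orthogonality to fall back on as one would have for $p=2$. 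The far part must instead be controlled by extracting genuine cancellation between the scales: one organizes the successive grids into a filtration and estimates $\sum_k S_k$ via a stopping-time/martingale-type square-function argument, and it is exactly here that the hypothesis that $\mu$ and $\nu$ have no common atoms is used, keeping this two-measure martingale structure nondegenerate. Once this multi-scale estimate is carried through — ultimately reducing it to finitely many applications of restricted boundedness against $\ell^p$-summable data — combining it with the near part gives $\|T_\e\|_{L^p(\mu)\to L^p(\nu)}\le C(m)<\infty$ uniformly in $\e$.
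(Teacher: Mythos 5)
There is a genuine gap, and you have located it yourself: the far part is never actually estimated. After the near-diagonal part is disposed of, what remains is the bilinear form of $K$ restricted to $\{|x-y|\gtrsim\e\}$ --- i.e.\ essentially the operator $T_\e$ you started with --- and your dyadic decomposition into annuli $S_k$ produces infinitely many pieces each bounded by the \emph{same} constant, with no decay and no orthogonality to exploit (the kernel has no smoothness or size hypotheses beyond restricted boundedness, so there is nothing for a Cotlar--Stein or square-function argument to grab onto). The proposed ``stopping-time/martingale-type'' repair is a placeholder, not a proof, and your guess about where the no-common-atoms hypothesis enters is also not where it is actually needed.

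The paper's argument avoids the scale decomposition entirely and splits the problem differently. Step one: $K_\e=K\cdot M_\e$ with $M_\e(x,y)=m((x-y)/\e)$, and since $1-m$ is a compactly supported $C^\infty$ function, $m=1-\widehat\rho$ for some $\rho\in L^1$; hence $M_\e$ is a \emph{global} Schur multiplier (an average of the rank-one multipliers $e^{-ia\cdot x}e^{ia\cdot y}$) with Schur norm $\le 1+\|\rho\|_{L^1}$, which is dilation invariant and so uniform in $\e$. This shows every $K_\e$ is restrictedly bounded with a uniform constant --- your localized Fourier-series expansion on blocks is a more laborious version of this, and the near/far split is unnecessary. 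Step two, which is the substitute for your missing far-part estimate, is Theorem \ref{t:restr-bd1}: for an $L^2\ti{loc}$ kernel (which $K_\e$ is, unlike $K$), restricted boundedness already implies boundedness with at most a doubled constant. This is proved by approximating arbitrary bounded $f$, $g$ weakly by $f\1\ci{E_n}$, $g\1\ci{F_n}$ where $E_n,F_n$ are disjoint, separated, ``well-mixed'' sets with $\1\ci{E_n}\rightharpoonup\tfrac12$, $\1\ci{F_n}\rightharpoonup\tfrac12$, applying the restricted bound to each separated pair, and passing to the limit using compactness of the Hilbert--Schmidt operator. It is in constructing these interleaved separated sets that the absence of common atoms is used (an atom charged by both measures cannot be split between $E_n$ and $F_n$, and the point $(a,a)$ is invisible to the restricted bound). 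Without importing this weak-approximation mechanism, or some equivalent way of passing from separated supports to arbitrary test functions, your argument cannot be completed.
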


Moreover, for all ``interesting'' kernels the $L^p(\mu)\to L^p(\nu)$ restricted boundedness implies the uniform boundedness of the classical truncations \eqref{trunc-01}.

Without going into details, we just mention that the ``interesting'' kernels include  kernel $1/(x-y)$, $x,y\in \R$ of the Hilbert transform, the kernel $(x-y)/|x-y|^{\alpha +1}$, $\alpha>0$, $x,y\in\R^d$ of the generalized Riesz transform $R_\alpha$ in $\R^d$,  the kernel  $1/(z-w)$, $z, w\in \C$ of the Cauchy transform, the kernel $1/(z-w)^2$, $z, w\in \C$ of the Beurling--Ahlfors transform and many others. 

Regularizations with smooth functions $m$ seem to be a more logical and convenient choice, than the classical one; for example if one starts with a \cz kernel then after smooth regularizations the resulting kernel will still be a \cz one with uniform estimates of the constants. However, the classical truncations are used most. 

\begin{rem*}
To define truncation of a kernel on the unit circle $\T$ we take the function $m$ on the line, $m\equiv0$ in a neighborhood of $0$ and $m\equiv 1$ in a neighborhood of $\infty$, and define functions $\wt m_\e$ on $\T$ by 
\[
\wt m_\e(e^{it}) = m(t/\e), \qquad -\pi < t \le \pi. 
\]
Then the regularized kernel $K_\e$ is defined as 
\[
K_\e(z, \xi) = K(z,\xi) m_\e(z/\xi), \qquad z, \xi \in \T. 
\]
The regularized kernels on $\T^d$ are defined similarly, and the same results as in $\R^d$ holds in $\T^d$.  
\end{rem*}

\begin{rem}
\label{remark}
For singular integrals  related to complex analysis there is another type of natural regularization. Namely for  the kernel $K(x,y)= 1/(x-y)$ on $\R$ one can consider kernels 
\begin{align}
\label{Cauchy-reg-01}
K_{\pm\e}(x,y) = 1/(x-y\pm i\e).
\end{align}
Similarly, for the kernel 
$
K(z,\xi) = 1/(1-\overline \xi z)
$
on $\T$ define the regularized kernel 
\begin{align}
\label{Cauchy-reg-02}
K_r(z, \xi) =  1/(1-r\overline \xi z), \qquad 0\le r<\infty\quad r\ne 1  . 
\end{align}

For these kernels  Theorem \ref{t:reg-bd-01} holds as well. 
\end{rem}

Now let us discuss the main ideas of the proofs. 

\subsection{First step: Schur multipliers}
The first idea is very simple: we want to multiply a restrictedly bounded kernel by a function $M$ such that the resulting kernel is still restrictedly bounded. 

\begin{defn}
We call a function $M(\fdot, \fdot)$ an $L^p(\mu)\to L^p(\nu)$ Schur multiplier if for any $L^p(\mu)\to L^p(\nu)$ restrictedly bounded kernel $K$ the kernel $KM $ is also $L^p(\mu)\to L^p(\nu)$ restrictedly bounded and 
\[
[KM]\ci{L^p(\mu)\to L^p(\nu)}\ut{r} \le C [K]\ci{L^p(\mu)\to L^p(\nu)}\ut{r}. 
\]
The best constant $C$ in the above inequality is called the Schur norm of $M$. 
\end{defn}

Traditionally, Schur multipliers are defined with respect to the operator norm of the corresponding integral operators, or with respect to the Schatten--von-Neumann norm, but our definition is very close in spirit, so we use the same term. 

Formally, our definition depends on $\mu$, $\nu$ and $p$, but we will construct  ``universal'' multipliers, that work for all $\mu$, $\nu$ and $p$ with the same estimate on the Schur norm. They also are Schur multipliers with respect to the operator norm, as well as with respect to the Schatten--von-Neumann norms.

Thus, in what follows we will omit $L^p(\mu)\to L^p(\nu)$ and simply say Schur multiplier.

\subsubsection{Constructing Schur multipliers via Fourier transform}
We start with an elementary observation: the function $M_a$, $M_a(x, y) :=e^{-i a\cdot x} e^{i a\cdot y} $, $a, x, y\in\R^d$ is a Schur multiplier with the Schur norm $1$ (as a product of two unimodular functions of one variable). 

Averaging in $a$ we get that if $\sigma$ is a complex-valued measure of bounded variation and $m=\widehat \sigma$ is its Fourier transform, 
\[
\widehat \sigma (s) := \int_{\R^d} e^{-is\cdot t} d\sigma (t) 
\]
then the function $M(x,y) = m(x-y)$ is a Schur multiplier with the Schur norm at most $\var \sigma$. 

Note also that for $m_\e(s) = m(s/\e)$  and the measure $\sigma_\e$ defined by $\sigma_\e(E) = \sigma(\e E)$ we have $m_\e= \widehat \sigma_\e$. Since $\var\sigma_\e=\var \sigma$ we get that all the functions $M_\e$
\[
M_\e(x,y) = m_\e(x-y) = m((x-y)/\e)
\]
are Schur multipliers with the Schur norm estimated by $\var\sigma$. 

Since a compactly supported $C^\infty$ function is a Fourier transform of an $L^1$ function (it is a Fourier transform of a Schwartz class function), and $1$ is trivially a Schur multiplier,  we can conclude that functions $M_\e$, $M_\e(x,y) = m((x-y)/\e)$ where $m$ is the $C^\infty$ regularizer defined in Section \ref{s:reg-01}. 

So we see that the regularized kernels $K_\e$ obtained using smooth regularizers $m$ are restrictedly bounded with the uniform (in $\e$) estimate on the restricted norm. 

To get the corresponding result for the torus $\T^d$ we just need to restrict the regularizers $m_\e$ to the cube $(-\pi, \pi]^d$ and then map the cube to the torus via the standard map.

\subsection{Cauchy type regularizations} Let us now discuss the Cauchy type regularizations \eqref{Cauchy-reg-01}, \eqref{Cauchy-reg-02}. For $\rho(x) =\1\ci{[0,\infty)} e^{-x}$ define 
\[
m(s)= 1-\widehat \rho(s) = \frac{s}{s-i}. 
\]
Then $m_\e(s) = m(s/e) = {s}/({s-i\e})$, and the functions $M_\e(x,y) = m_\e(x-y)$ are Schur multipliers with Schur norm at most $2$. Computing the regularized kernel we get 
\[
K_\e(x,y) = \frac{1}{x-y} \frac{x-y}{x-y-i\e} = \frac{1}{x+i\e -y}, 
\]
so the kernels $K_{+\e}$ from \eqref{Cauchy-reg-01} are uniformly restrictedly bounded. 

Repeating the same reasoning with $\rho(x) =\1\ci{(-\infty, 0]} e^{x}$ we get the conclusion for $K_{-\e}$. 

For the kernel \eqref{Cauchy-reg-02} on $\T$ we use the Fourier transform on $\Z$. Namely, it is easy to show that if $a\in\ell^1(\Z)$ and $m(z) := \sum_{k\in \Z} a_k z^k$, $z\in\T$,   then the function $M$
\[
M(z,\xi) = m(z/\xi) \qquad z, \xi \in \T
\]
is a Schur multiplier with Schur bound at most $\|a\|\ci{\ell^1}$. 

Then for $0\le r <1$ multiplying $K(z,\xi) = 1/(1-\overline \xi z)$ by
\[
m(z/\xi)= 1 + \sum_{n=1}^\infty (r^n- r^{n-1}) (\overline \xi z)^n = \frac{1-\overline\xi z}{1-r\overline \xi z}
\]
we at most double the restricted norm (because $1+\sum_{n=1}^\infty |r^n-r^{n-1}| = 1+r\le 2$). 
So, for the kernel
\[
K(z, \xi) \cdot \frac{1-\overline\xi z}{1-r\overline \xi z} = \frac{1}{1-r\overline \xi z} =K_r(z,\xi), \qquad r<1
\]
we get for $r<1$
\begin{align}
\label{Cuachy-restr-est-02}
[K_r]\ci{L^p(\mu)\to L^p(\nu)} \le 2 [K]\ci{L^p(\mu)\to L^p(\nu)}. 
\end{align}

For $r>1$ we can write
\begin{align*}
m(z/\xi)=\frac{1-\overline\xi z}{1-r\overline \xi z} = 1-\sum_{n=1}^\infty(r^{-n} - r^{-(n+1)} ) (\overline \xi z)^{-n}.
\end{align*}
Noticing that $1+\sum_{n=1}^\infty |r^{-n} - r^{-(n+1)}| = 1 + r^{-1}\le 2$ we see that in the case $r>1$ 
\eqref{Cuachy-restr-est-02} holds as well.

\subsection{Final step: boundedness of the regularized operators}

\begin{theo}
\label{t:restr-bd1}
Let $\mu$ and $\nu$ be Radon measures in $\R^d$ 
without common atoms. Assume that a kernel $K\in L^2\ti{loc} (\mu\times \nu)$ is $L^p(\mu)\to L^p(\nu)$ restrictedly bounded, with the restricted norm $C$. Then the integral operator with $T$ kernel $K$ is a bounded operator $L^p(\mu)\to L^p(\nu)$ with the norm at most $2C$. 

\end{theo}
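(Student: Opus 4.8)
The plan is to establish the equivalent bilinear-form estimate $|\La Tf,g\Ra|\le 2C\,\|f\|_{L^p(\mu)}\|g\|_{L^{p'}(\nu)}$ for $f\in L^\infty(\mu)$, $g\in L^\infty(\nu)$ with compact support, and then pass to the operator bound by density of such $f,g$ in $L^p(\mu)$ and $L^{p'}(\nu)$. For such $f,g$ the integral $\La Tf,g\Ra=\int\int K(x,y)f(y)g(x)\,\dd\mu(y)\,\dd\nu(x)$ is absolutely convergent, since $K\in L^2\ti{loc}(\mu\times\nu)$ and the supports are compact; and, crucially, because $\mu$ and $\nu$ have no common atoms, $(\mu\times\nu)(\{x=y\})=\sum_a\mu(\{a\})\nu(\{a\})=0$. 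Hence $\La Tf,g\Ra=\int\int_{x\ne y}Kfg$, and any splitting of the off-diagonal set into measurable pieces produces an absolutely convergent decomposition of the form.

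Next I would introduce a dyadic grid on $\R^d$, shifted generically so that no dyadic hyperplane carries an atom of $\mu+\nu$. For each generation $n$, writing $f=\sum_j f\1_{Q_j}$ and $g=\sum_i g\1_{Q_i}$ over the half-open dyadic cubes of side $2^{-n}$, decompose $\La Tf,g\Ra=N_n+F_n$, where $N_n$ collects the pairs of cubes with $\overline{Q_i}\cap\overline{Q_j}\ne\emptyset$ (equal or adjacent) and $F_n$ the remaining, genuinely separated, pairs. On the support of the kernel underlying $N_n$ one has $|x-y|\le 2\sqrt d\,2^{-n}$, so its integrand is dominated by $|Kfg|\,\1_{\{|x-y|\le 2\sqrt d\,2^{-n}\}}$, which converges pointwise $(\mu\times\nu)$-a.e.\ to $|Kfg|\,\1_{\{x=y\}}$; since $Kfg\in L^1(\mu\times\nu)$ and the diagonal is null, dominated convergence gives $N_n\to 0$. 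Thus $\La Tf,g\Ra=\lim_n F_n$, and everything reduces to a bound on $F_n$ uniform in $n$.

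The remaining — and hardest — step is to prove $|F_n|\le 2C\,\|f\|_{L^p(\mu)}\|g\|_{L^{p'}(\nu)}$ uniformly in $n$. One cannot simply bound each separated pair $\La T(f\1_{Q_j}),g\1_{Q_i}\Ra$ by $C\|f\1_{Q_j}\|_p\|g\1_{Q_i}\|_{p'}$ and sum, for the resulting $\ell^1$-type double sum over cube pairs blows up as $n\to\infty$. The idea is instead to reassemble the separated interactions across all scales and apply restricted boundedness only to \emph{large, mutually disjoint} blocks: one organizes $\int\int_{x\ne y}Kfg$ by the dyadic scale of $|x-y|$ (equivalently, by the generation at which two cubes first separate), so that at a fixed scale each cube interacts with only boundedly many others, restricted boundedness applies to the unions, and the contribution of that scale is controlled by $C\|f\|_p\|g\|_{p'}$ via Hölder's inequality over the disjointly supported pieces — the role of the hypothesis $K\in L^2\ti{loc}$ being to make the near-diagonal tails small, so that the sum over scales converges rather than being merely finite at each scale. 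The factor $2$ is produced by a two-fold symmetry: one first writes $\{x\ne y\}=\{x\cdot\omega<y\cdot\omega\}\cup\{x\cdot\omega>y\cdot\omega\}$ for a generic linear functional $\omega$ and treats the two "triangular" halves symmetrically (if $\{x\cdot\omega=y\cdot\omega\}$ is not $(\mu\times\nu)$-null it decomposes over the countably many hyperplanes carrying mass of both $\mu$ and $\nu$, on each of which $\mu$ and $\nu$ again have no common atoms, and one descends to $\R^{d-1}$, arguing by induction on the dimension). I expect the precise bookkeeping in this last step — arranging that the multiscale sum telescopes to exactly $2C$ rather than to a scale-dependent constant — to be the main technical obstacle.
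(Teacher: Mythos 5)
The step you yourself flag as the main obstacle is a genuine gap, and the route you sketch for it will not close. Restricted boundedness applies only to a pair of functions whose supports are two sets at positive distance from one another; at a fixed dyadic scale the union of the cubes $Q_j$ and the union of their separated partners $Q_i$ each essentially fill the supports of $f$ and $g$, so the hypothesis cannot be applied ``to the unions''. Applying it pair by pair and summing with H\"older gives, for each scale and each of the boundedly many relative positions of cubes, a contribution of at most $C\|f\|_{L^p(\mu)}\|g\|_{L^{p'}(\nu)}$ --- but there are infinitely many scales and nothing makes that sum converge. The hypothesis $K\in L^2\ti{loc}$ off the diagonal cannot rescue this: it carries no quantitative near-diagonal decay expressible through the restricted norm $C$, and the final bound $2C$ must be independent of any $L^2$ norm of $K$. (For the same reason your opening claim that $\iint K f g\,d\mu\,d\nu$ is absolutely convergent is unjustified: $K$ is only locally square integrable \emph{off} the diagonal, so $Kfg$ need not be in $L^1(\mu\times\nu)$ --- think of $1/(x-y)$ on $[0,1]^2$ with Lebesgue measure --- and the dominated-convergence argument for $N_n\to0$ does not apply as stated.) Finally, the triangular splitting $\{x\cdot\omega<y\cdot\omega\}\cup\{x\cdot\omega>y\cdot\omega\}$ is not of the separated-product form the hypothesis controls; triangular truncation is in general not bounded by the norm of the full bilinear form, so the factor $2$ cannot be produced this way.

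The paper's proof avoids multiscale summation altogether, and this is the missing idea. One first truncates $K$ to compact sets off the diagonal, reducing to $K\in L^2(\mu\times\nu)$, so that $T$ is Hilbert--Schmidt and hence compact on $L^2$. Then for each $n$ one constructs a \emph{single} pair of separated ``well-mixed'' sets $E_n,F_n$ (mixing measured with respect to $\mu+\nu$ on all dyadic cubes of size at least $2^{-n}$, atoms being split off and handled separately) so that $f\1\ci{E_n}\rightharpoonup\tfrac12 f$ weakly in $L^2(\mu)$ and $g\1\ci{F_n}\rightharpoonup\tfrac12 g$ weakly in $L^2(\nu)$, while $\limsup_n\|f\1\ci{E_n}\|_{L^p(\mu)}\le 2^{-1/p}\|f\|_{L^p(\mu)}$ and $\limsup_n\|g\1\ci{F_n}\|_{L^{p'}(\nu)}\le 2^{-1/p'}\|g\|_{L^{p'}(\nu)}$. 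Compactness turns the weak convergence into $\La Tf\1\ci{E_n}, g\1\ci{F_n}\Ra\to\tfrac14\La Tf,g\Ra$, and restricted boundedness applied to each single separated pair yields
\[
|\La Tf,g\Ra|\le 4C\cdot 2^{-1/p}\,2^{-1/p'}\,\|f\|_{L^p(\mu)}\|g\|_{L^{p'}(\nu)}=2C\,\|f\|_{L^p(\mu)}\|g\|_{L^{p'}(\nu)};
\]
the constant $2$ comes from $4\cdot 2^{-1}$, not from any symmetry of the off-diagonal region.
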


Restricting the kernels to compact subsets exhausting $\R^d\times\R^d$ one can easily reduce the proof to the case $K\in L^2 (\mu\times \nu)$ (globally, not locally). Then the idea of the proof is very simple. Taking bounded compactly supported functions $f$ and $g$ we can write
\[
\La T f, g\Ra_\nu = \int K(x,y) f(y) g(x) d\mu(y) d\nu(x) .
\]

The main idea of the proof is to construct bounded functions $f_n$, $g_n$ with separated compact supports such that $f_n \rightharpoonup \frac12 f$ weakly in $L^2(\mu)$, $g_n \rightharpoonup \frac12 g$ weakly in $L^2(\nu)$ and such that 
\begin{align}
\label{lim-norm}
\limsup_{n\to\infty} \| f_n\|\ci{L^p(\mu)} \le 2^{-1/p} \|f\|\ci{L^p(\mu)}, \qquad 
\limsup_{n\to\infty} \| g_n\|\ci{L^{p'}(\mu)} \le 2^{-1/p'} \|g\|\ci{L^{p'}(\nu)}\,.
\end{align}
Since the operator $T$ is Hilbert--Schmidt, and so compact (as an operator $L^2(\mu)\to L^2(\nu)$) the weak convergence implies that 
\[
\La T f_n , g_n\Ra_\nu \to \frac14 \La Tf,g\Ra_\nu. 
\] 
Therefore, using \eqref{lim-norm} we get 
\[
|\La Tf,g\Ra | \le \limsup_{n\to\infty} 4|\La Tf_n,g_n\Ra| \le 2 C\|f\|\ci{L^p(\mu)} \|g\|\ci{L^{p'}(\nu)}.
\]

The man idea of the construction of the functions$f_n$ and $g_n$ is quite simple, at least for the absolutely continuous piece: we define $f_n:= \1\ci{E_n}$, $g_n:=\1\ci{F_n}$ where $E_n$ and $F_n$ are separated  ``mesh like'' subsets, that are well mixed, meaning that that for all dyadic cubes $Q$ of size at least $2^{-n}$ the Lebesgue measure of the sets $Q\cap E_n$ and $Q\cap F_n$ is almost half (with relative error of say $2^{-n}$) of the measure of $Q$. Construction of such sets in for the Lebesgue measure is rather trivial and can be left as an exercise for the reader. 

For the measures $\mu$ and $\nu$ without atoms the construction is almost the same, only the ``well mixed'' property is with respect to the measure $\sigma =\mu+\nu$, meaning that for any dyadic cube $Q$ of size at least $2^{-n}$ the measures $\sigma(Q\cap E_n)$, $\sigma(Q\cap F_n)$ are almost half of $\sigma(Q)$ with relative error $2^{-n}$. It might not be immediately obvious how to construct such sets $E_n$, $F_n$, but the construction is relatively simple and straightforward, see 
 \cite{Regularizations2013} for details. 
 
 The construction in the general case is just a bit more complicated. Namely, we first construct the sets $E_n$ and $F_n$ with respect to the continuous parts $\mu\ti c$, $\nu\ti c$ of the measures (making sure that the sets do not contain any atoms). Then we define $f_n$ and $g_n$ by adding to $f\1\ci{E_n}$ and $g\1\ci{F_n}$ the functions
 \[
 \frac12 \sum_{k=1}^n f(a_k) \delta_{a_k}, \qquad 
 \frac12 \sum_{k=1}^n g(b_k) \delta_{b_k}
 \]	
respectively, where $a_k$, $b_k$ are atoms of $\mu$ and $\nu$ respectively. To make sure that the functions $f_n$ and $g_n$ have separated supports, we then just need to ``shrink'' the  sets $E_n$ $F_n$ by removing small discs around atoms. Again, the reader is referred to \cite{Regularizations2013} for the details. 

This idea of using ``well mixed'' set was exploited in \cite{NikTr2002} in the case of Lebesgue measure. It was later used in \cite{LiawTreil2009}, where some of the result in this section were proved under the assumption that the singular parts of $\mu$ and $\nu$ are mutually singular. 

The results in full generality were proved in \cite{Regularizations2013}, the reader should look there for full details.

\section{Clark theory for rank one perturbations of unitary operators}\label{s-UNITARY}

\subsection{Plan of the game}
As we discussed above in Section \ref{s:UnitRk1}, rank one unitary perturbations of a unitary operator $U$ are parametrized by the formula \eqref{U_gamma-SA}. If in \eqref{U_gamma-SA} we take $|\alpha|<1$ (instead of $|\alpha|=1$) the resulting operator $U_\alpha$ will be not a unitary, but only a contractive ($\|U_\alpha\|\le1$) operator. 

If, as in Section \ref{s:UnitRk1} we assume by ignoring the trivial part that $b$ is $*$-cyclic vector for $U$, then for $|\gamma|<1$ the operator $U_\gamma = U+ (\gamma-1)bb_1^*$, $b_1=U^*b$ is a \emph{completely non-unitary} (c.n.u.) contraction. The term completely non-unitary means that there is no reducing (i.e.~invariant for $U_\gamma$ and $U^*_\gamma$) subspace on which $U_\gamma$ acts unitarily. 

A completely non-unitary contraction $T$ is up to unitary equivalence determined by its so-called \emph{characteristic function} $\theta=\theta\ci T$, see the definition below. Namely, $T$ is unitarily equivalent to its \emph{model} $\cM=\cM_\theta$, where $\cM_\theta$ is a \emph{compression}  of the multiplication operator $M_z$, 
\begin{align*}
\cM_\theta  = P_{\theta} M_z \bigm|_{\textstyle \cK_\theta};
\end{align*}
here $\cK_\theta $ is a subspace of a generally vector-valued, and possibly weighted $L^2$ space on the unit circle, $P_\theta = P_{\cK_\theta}$ is the orthogonal projection onto $\cK_\theta$, and $M_z$ is the multiplication by the independent variable $z$, $M_z f(z) = z f(z)$, $z\in \T$. 

So, we have two unitarily equivalent representations of the operator $U_\gamma$, $|\gamma|<1$: the representation 
\[
U_\gamma = M_\xi + (\gamma-1)bb_1^*, \qquad b=\1, \quad b_1=M_\xi^* \1
\]
in the spectral representation of $U$ in $L^2(\mu)$, where $\mu$ is the spectral measure of $U$ corresponding to the vector $b$, and the representation as the model operator $\cM_{\theta_\gamma}$ in the model subspace $\cK_{\theta_\gamma}$. 

The Clark theory describes the unitary operator providing this unitary equivalence, i.e.~a unitary operator $\Phi_\gamma: \cK_{\theta_\gamma} \to L^2(\mu)$ such that 
\[
\Phi_\gamma \cM_{\theta_\gamma} = U_\gamma \Phi_\gamma. 
\]
D.~Clark in his original paper \cite{Clark} described such operators for the particular case when $\theta_\gamma$ is an inner function. He started with   the model operator (unitarily equivalent to $U_\gamma$, $|\gamma|<1$ in our notation) in a particular case of inner characteristic function, described all its unitary rank one perturbations ($U_\alpha$, $|\alpha|=1$ in our notation) and described the unitary operator between the model operator $\cM_\theta$ and the spectral representation  of $U_\alpha$, $|\alpha|=1$. 

Translated to our language the fact that the characteristic function $\theta$ is inner means  that  the operator $U$ (and so all $U_\alpha$, $|\alpha|=1$) have purely singular spectrum.

\subsection{A functional model for a c.n.u.~contraction}
Let us recall the definition related to the functional model. For an operator $T$ acting in a separable Hilbert space we define the defect operators 
\[
D\ci T:= (\bI -T^*T)^{1/2}, \qquad D\ci{T^*}:=(\bI-TT^*)^{1/2},   
\]
and the defect subspaces
\begin{align*}
\fD=\fD_{T} := \clos \Ran D_T, \qquad \fD_*=\fD_{T^*} := \clos\Ran D_{T^*}.
\end{align*}
The characteristic function $\theta=\theta_T$ of the operator $T$ is an analytic function $\theta =\theta_T \in H^\infty_{\fD\shto \fD_*}$ whose values are bounded operators (in fact, contractions) acting from $\fD$ to $\fD^*$ defined by the equation 
\begin{align}
\label{CharFunction}
\theta_T(z) = \left( -T + z D_{T^*}(\bI - z T^*)^{-1} D_T \right) \Bigm|_{\textstyle\fD}, \qquad z\in \D. 
\end{align}
Note that $T\fD \subset \fD_*$, so for $z\in D$ the above expression indeed can be interpreted as an operator from $\fD$ to $\fD_*$. 

It is customary to assume that the characteristic function is defined up to constant unitary factors on the right and on the left, i.e.~one considers the whole equivalence class consisting of functions $U\theta V$, 
where $U:\fD_*\to E_*$ and $V: E\to \fD$ are unitary operators and $E_*$, $E$ are Hilbert spaces of appropriate dimensions. 
The advantage of this point of view is that we are not restricted to using the defect spaces of $T$, but can work with arbitrary Hilbert spaces of appropriate dimensions. 

Note, that the characteristic function (defined up to constant unitary factors) is a unitary invariant of a completely 
non-unitary contraction: any two such contractions with the same characteristic function are unitarily equivalent.

Note also, that given a characteristic function, any representative  gives us a model, and there is a standard unitary equivalence between the model for different representatives.

\begin{rem*}
Another way to look at a choice of a representative of a characteristic function is to pick orthonormal bases in the defect spaces and treat the characteristic function as a matrix-valued function (possibly of infinite size). The choice of the orthonormal bases is equivalent to the choice of the constant unitary factors. 
\end{rem*}

In this paper by a \emph{functional model} associated to an operator-valued function $\theta\in H^\infty_{E\shto E_*}$ we understand the following: a model space $\cK_\theta$ is an appropriately constructed subspace of a (possibly) weighted space $L^2(E_*\oplus E,W)$ on the unit circle $\T$ with the operator-valued weight  $W$.  The model operator $\cM_\theta$ is a compression of the multiplication operator $M_z$ onto $\cK_\theta$, 
\begin{align}
\label{model-02}
\cM_\theta  = P_{\theta} M_z \bigm|_{\textstyle \cK_\theta};
\end{align}
where $P_\theta = P_{\cK_\theta}$ is the orthogonal projection onto $\cK_\theta$. 

All the functional models for the same $\theta$ are unitarily equivalent, so sometimes people interpret them as different \emph{transcriptions} of one object. 

As we already mentioned above, a completely non-unitary contraction with characteristic function $\theta$ is unitarily equivalent to its model $\cM_\theta$. 

On the other hand, for any purely contractive $\theta\in H^\infty\ci{E\shto E_*}$, 
$\|\theta\|_\infty\le 1$ the model operator $\cM_\theta$ is a completely non-unitary contraction, with $\theta $ being its characteristic function. Thus, any such $\theta$ is a characteristic function of a completely non-unitary contraction.


\subsubsection{Sz.-Nagy--
\texorpdfstring{Foia\c{s}}{Foias} 
transcription}

The Sz.-Nagy--Foia\c{s} model (transcription) is \linebreak 
probably the most used.

The model  space $\K_\theta$ is defined as a subspace of $L^2(E_*\oplus E)$ (non-weighted, $W(z)\equiv \I$), 
\begin{align}\label{K_theta}
 \K_\te=
 \left( \begin{array}{c} H^2_{E_*} \\ \clos\Delta L^2_{E} \end{array}\right) 
 \ominus 
  \left( \begin{array}{c} \theta \\ \Delta \end{array}\right) H^2_{E}
\intertext{where the defect $\Delta$ is given by}
\label{d-Delta}
 \Delta(z) := (1-\theta(z)^*\theta(z))^{1/2} , \qquad z\in \T
 .
\end{align}
If the characteristic function $\theta$ is \emph{inner}, meaning that its boundary values are isometries a.e.~on $\T$, then $\Delta \equiv 0$, so the lower ``floor'' of $\cK_\theta$ collapses and we get a simpler, ``one-story'' model subspace, 
\begin{align*}
\cK_\theta = H^2(E_*)\ominus \theta H^2(E). 
\end{align*}
This subspace is probably much more familiar to analysts, especially when $\theta$ is a scalar-valued function. 

The model operator $\cM$ is defined by \eqref{model-02} as the compression of the multiplication operator $M_z$ (also known as forward shift operator) onto $\cK_\theta$, 
and the multiplication operator $M_z$ is understood as the entry-wise multiplication by the independent variable $z$, 
\[
M_z \left(\begin{array}{c} g \\ h \end{array} \right) = \left(\begin{array}{c} zg \\ zh \end{array} \right). 
\]

As we discussed above, 
the characteristic function $\theta$ is defined up to constant unitary factors on the right and on the left. 
But one has to be a bit careful here, because if $\wt \theta(z) = U \theta (z) V$, where $U$ and $V$ are constant unitary operators, then the spaces $\cK_\theta$ and $\cK_{\wt\theta}$ are different. 

However, the map $\cU$
\[
\cU \left(\begin{array}{c} g \\ h \end{array} \right) = \left(\begin{array}{c} U g \\ V^*  h \end{array} \right) 
\]
is the canonical unitary map transferring the model from one space to the other. 

Namely, it is easy to see that $\cU$ is a unitary map from $H^2(E_*) \oplus \clos\Delta L^2 (E)$ onto $H^2(UE_*) \oplus \clos \wt\Delta L^2 (V^* E)$, where $\wt \Delta = \Delta_{\wt \theta} = V^*\Delta V$. Moreover, it is not difficult to see that $\cU \cK_\theta = \cK_{\wt\theta}$ and that $\cU$ commutes with the multiplication by $z$, so $\cU_\theta := \cU\bigm|_{\textstyle \cK_\theta}$ intertwines the model operators, 
\[
\cU_\theta \cM_\theta = \cM_{\wt\theta} \cU_\theta  .
\]

\subsubsection{de Branges--Rovnyak transcription}
\label{s:deBrangesRepr}
Let us present this transcription as it is described in \cite{Nik-Vas_model_MSRI_1998}. Since the ambient space in this transcription is a weighted $L^2$ space with an operator-valued weight, let us recall that if $W$ is an operator-valued weight on the circle, i.e.~a function whose values are self-adjoint non-negative operators in a Hilbert space $E$, then the norm in the space $L^2(W) $ is defined as 
\[
\|f\|_{L^2(W)} = \int_\T \left( W(z) f(z), f(z)\right)\ci{E} \frac{|dz|}{2\pi} \,.
\]
There are some delicate details here in defining the above integral if we allow the values $W(z)$ to be unbounded operators, but we will not discuss it here. In our case when the characteristic function is scalar-valued the values $W(z)$ are bounded self-adjoint operators on $\C^2$, and the definition of the integral is straightforward. 

Let 
\[
W_\theta(z) = \left(\begin{array}{cc} \I & \theta(z) \\ \theta(z)^* & \I \end{array}\right)\, .
\]
The weight in the ambient space will be given by $W=W_\theta^{[-1]}$, $W_\theta^{[-1]}(z) = (W_\theta(z))^{[-1]}$ where 
$A^{[-1]}$ stands for the Moore--Penrose inverse of the operator $A$. If $A=A^*$ then $A^{[-1]}$ is $\OZ$ on $\Ker A$ and is 
equal to the left inverse of $A$ on $\Ran A$. The model space
$\cK_\theta$ is defined as
\begin{align}
\label{deBrangesRepr}
\cK_\theta = \left\{
\left(\begin{array}{c} g_+ \\ g_-\end{array} \right) \,:\ g_+\in H^2(E_*),\  g_- \in H^2_-(E),\  g_- - \theta^* g_+ \in \Delta L^2(E)
\right\}   .
\end{align}

\begin{rem}
\label{r:dBr-R_orig}
The original de Branges--Rovnyak model was initially described in \cite{deBr-Rovn_CanonModels_1966} using  completely different terms. To give the definition from \cite{deBr-Rovn_CanonModels_1966} we need to recall the notion of a Toeplitz operator. For $\f\in L^\infty_{E\shto E_*}$ the Toeplitz operator $T_\f : H^2(E)\to H^2(E_*)$ with symbol $\f$ is defined by 
\[
T_\f f := P_+ (\f f), \qquad f\in H^2(E). 
\]

The (preliminary) space $\cH(\theta) \subset H^2(E_*)$ is defined as a range $(\I - T_{\theta}T_{\theta^*})^{1/2} H^2 (E)$ endowed with the \emph{range norm} (the minimal norm of the preimage). 

Let the involution operator $J$ on $L^2(\T)$ be defined as 
\[
Jf(z) = \overline z f(\overline z). 
\]
Following de Branges--Rovnyak \cite{deBr-Rovn_CanonModels_1966} define  the \emph{model space} $\cD(\theta)$  as the set of vectors 
\[
\left( \begin{array}{c}
g_1 \\ 
g_2 \\ 
\end{array} \right) \ : g_1\in \cH(\theta), \ g_2\in H^2(E),  \text{ such that } z^n g_1 -\theta P_+(z^n Jg_2) \in \cH(\theta) \ \forall n\ge 0, 
\] 
and such that 
\[
\left\| \left( \begin{array}{c}
g_1 \\ 
g_2 \\ 
\end{array} \right) \right\|_{\cD(\theta)}^2 := \lim_{n\to\infty}
\left( \|z^n g_1 -\theta P_+(z^n Jg_2)\|\ci{\cH(\theta)}^2 + \|P_+(z^n Jg_2)\|_2^2 \right) <\infty. 
\]
It might look surprising, but it was proved in \cite{Nik-Vas_FunctModels_1989} that the operator 
$\left( \begin{array}{c}
g_+ \\ 
g_- \\ 
\end{array} \right)
\mapsto 
\left( \begin{array}{c}
g_+ \\ 
Jg_- \\ 
\end{array} \right)
$
is a unitary operator between the described above model space $\cK_\theta$ in the de Branges--Rovnyak transcription  and the model space $\cD(\theta)$. 
\end{rem}

\subsection{Model for the operator \texorpdfstring{$U_\gamma$}{U<sub>gamma}.} For the perturbations $U_\gamma$, $|\gamma|<1$ the functional model can be computed explicitly.

The defect operators are computed to be
\begin{align*}
D_{U_\gamma} &= \left(  \mathbf{I} - {U_\gamma}^*{U_\gamma} \right)^{1/2} = \left( 1 -|\gamma|^2\right)^{1/2} b_1b_1^*,\\
 D_{{U_\gamma}^*} &= \left( \mathbf{I}- {U_\gamma}{U_\gamma}^*\right)^{1/2} = \left( 1 -|\gamma|^2\right)^{1/2} b b^*
\end{align*}
and the 
defect spaces are
$$
\mathfrak{D}=\mathfrak{D}\ci{U_\gamma} = \text{span}\{b_1\}\qquad\text{and}\qquad
\mathfrak{D}_\ast =\mathfrak{D}\ci{{U_\gamma}^*}= \text{span}\{b\}.
$$
Note that the defect spaces are one-dimensional, so the characteristic function $\theta=\theta_\gamma$  is a scalar-valued function. We already mentioned above that $\theta \in H^\infty$, $\|\theta\|_\infty<1$. 
Note also that the defect spaces do not depend on $\gamma$. 

%
%

The characteristic function $\theta_\gamma$ of $U_\gamma$ can be computed in terms of Cauchy type transforms. 
For a (possibly complex-valued) measure $\tau$ on $\T$ and $\la\notin\T$ define the Cauchy type transforms $R$, $R_1$ and $R_2$ by
\begin{align}
\label{CauchyTrans}
R \tau (\la) := \int_\T \frac{d\tau(\xi)}{1-\overline\xi\la}, \quad R_1 \tau (\la) := \int_\T \frac{\overline\xi \la d\tau(\xi)}{1-\overline\xi\la}, \quad R_2\tau(\la):= \int_\T \frac{1+ \overline\xi \la }{1-\overline\xi\la} d\tau(\xi).
\end{align}
If we pick $b_1$ and $b$ to be the basis vectors in the corresponding defect spaces, then the characteristic function $\theta_\gamma$ of the operator $U_\gamma$, $|\gamma|<1$ is given by 
\begin{align}\label{e-CHARgamma}
\theta_\gamma(\la) =  -\gamma + \frac{(1-|\gamma|^2) R_1 \mu(\la)}{1 + (1-\overline\gamma) R_1 \mu(\la)} = 
\frac{(1-\gamma) R_2\mu(\la) -(1+\gamma)}{(1-\overline\gamma )R_2\mu(\la)+(1+\overline\gamma)}\,, \qquad \la\in \D.
\end{align}
Note that the formulas for $\theta_0$ ($\gamma=0$) are especially simple. 
And $\te_0$ is related to $\te_\gamma$ by a fractional transformation:
\begin{align}
\label{theta_0-theta_gamma}
\theta_\gamma = \frac{\theta_0-\gamma}{1-\overline\gamma\theta_0} \qquad
\textup{or equivalently } \qquad \theta_0 = \frac{\theta_\gamma + \gamma}{1 + \overline\gamma\theta_\gamma} \,.
\end{align}

To compute the characteristic function one can use the definition \eqref{CharFunction} of the characteristic function with $U_\gamma$ instead of $T$ and the inversion formula \eqref{pert_I}. Namely, writing 
\begin{align*}
\I - z U_\gamma^* = (\I - z U^*)\left(I- z(\overline \gamma -1) (\I-zU^*)^{-1} b_1b^*\right)
\end{align*}
and applying the inversion formula \eqref{pert_I} we get denoting $\beta=\gamma-1$
\begin{align*}
(\I - z U_\gamma^*)^{-1} = \left( \I +  \frac{1}{\left(z\overline \beta (\I-zU^*)^{-1} b_1, b\right)\ci{\cH}} z\overline \beta (\I-zU^*)^{-1}b_1b^* \right) (\I -zU^*)^{-1}.
\end{align*}
In the spectral representation of $U$ in $L^2(\mu)$ the operator $(I-zU^*)^{-1}$ is the multiplication by the function $1/(1-\overline \xi z)$, $b\equiv\1$, $b_1(\xi)\equiv \xi$, so the above inverse can be explicitly computed. Then standard algebraic manipulations lead to the formulas  \eqref{e-CHARgamma} for the resolvent. 

A different way of computing the characteristic function for finite rank perturbations can be found in \cite{RonRkN}. 

We point out that if the measure $\mu$ is purely singular (with respect to the Lebesgue measure), then the functions $\theta_\gamma$ are inner ($|\theta_\gamma|=1$ a.e.~on $\T$). In this case the model is especially simple, the model space consists of scalar functions,  and that is the case treated by the original Clark theory. 

However, in our case, $\mu$ is an arbitrary probability measure, so the characteristic functions can be non inner, and the model is more complicated: the model space consists of vector-valued functions (with values in $\C^2$).

\subsection{Preliminaries about Clark operator}
Recall that our goal is to describe a \emph{Clark operator}, i.e.~a unitary operator (non-uniqueness is discussed in the next paragraph) that realizes unitary equivalence between $U_\gamma$ and $\cM_{\te_\gamma}$. Namely, we want to find a unitary operator $\Phi_\gamma:\cK_{\theta_\gamma}\to L^2(\mu)$
such that 
  \begin{align}\label{Phi*-intertwine}
  \Phi_\gamma \cM_{\te_\gamma}  = U_\gamma \Phi_\gamma,
  \end{align}

Let us discuss what freedom do we have in choosing such an operator. Clearly, $\Phi_\gamma$ maps defect spaces of $\cM_\gamma$ to the corresponding defect spaces of $U_\gamma$. Therefore, $\Phi^*_\gamma b$ and $\Phi^*_\gamma b_1$ must be unit vectors in $\mathfrak{D}_{\cM_{\te_\gamma}^\ast}$ and $\mathfrak{D}_{\cM_{\te_\gamma}}$ respectively. 

We say that the unit vectors $c\in \mathfrak D_{\cM_{\theta_\gamma}^*}$ and $c_1\in \mathfrak D_{\cM_{\theta_\gamma}}$ \emph{agree} if there exists a unitary map $\Phi_\gamma:\cK_{\theta_\gamma}\to L^2(\mu)$ satisfying \eqref{Phi*-intertwine} such that 
\[
\Phi_\gamma^* b = c, \qquad \Phi_\gamma^* b_1 =c_1 .
\]
If $\gamma = 0$ and $\mu$ is the Lebesgue measure, then it is not hard to see that $\te_\gamma\equiv 0$. 
It is also easy to see that in this case, any two unit vectors $c\in \mathfrak D_{\cM_{\theta_\gamma}^*}$ and $c_1\in \mathfrak D_{\cM_{\theta_\gamma}}$ agree.

Otherwise, if either $\gamma\neq 0$ or $\mu$ differs from the Lebesgue measure, then for any unit vector $c\in \mathfrak D_{\cM_{\theta_\gamma}^*}$ there exist a unique vector $c_1\in \mathfrak D_{\cM_{\theta_\gamma}}$ which agrees with $c$; for details see Proposition 2.9 of \cite{LT}.  That means the operator $\Phi_\gamma$ is unique up to a multiplicative unimodular constant $\alpha\in\T$; in particular, if we fix a unit vector $c \in \mathfrak D_{\cM_{\theta_\gamma}^*}$ then the condition $\Phi_\gamma c =b$ uniquely determines the Clark operator $\Phi_\gamma$. 

In the trivial case when $\mu$ is the normalized Lebesgue measure and $\gamma=0$ the Clark operator $\Phi_\gamma$ can be easily constructed via elementary means, so in what follows we will ignore this case.

%

\subsection{A \texorpdfstring{``universal"}{"universal"} representation formula for the adjoint of the Clark operator}\label{ss-Phi}


An explicit computation of the defect spaces of the compressed shift operator $\cM_\te$ yields that in the
Sz.-Nagy--Foia\c s transcription
\begin{align*}
\mathfrak D_{\cM^*_{\te}} = \spa\{c\}, \qquad \mathfrak D_{\cM_{\te}} = \spa\{ c_1\}, 
\end{align*}
where 
\begin{align}
\label{c}
c(z) &:= \left( 1- |\theta(0)|^2 \right)^{-1/2} 
\left( \begin{array}{c} 1-\overline{\theta(0)}\theta (z)\\ -\overline{\theta(0)}\Delta(z) \end{array} \right), 
\\
\label{c_1}
c_1(z) & := \left( 1- |\theta(0)|^2 \right)^{-1/2} 
\left( \begin{array}{c} z^{-1} \left(\theta(z)- \theta(0)\right) \\ z^{-1} \Delta (z)\end{array} \right),
\end{align}
where $\Delta:= (1-|\theta|^2)^{1/2}$. 

Moreover, the vectors $c$ and $c_1$ are of unit length and agree. Note also that it follows from \eqref{e-CHARgamma} that $\theta_\gamma(0)=-\gamma$, so the above formulas can be further simplified.


The following theorem describes the adjoint $\Phi_\gamma^*$ of the Clark operator. Note that the intertwining relation \eqref{Phi*-intertwine} can be rewritten as 
\[
\Phi_\gamma^* U_\gamma = \cM_{\theta_\gamma} \Phi_\gamma^*.
\] 

\begin{theo}[A ``universal" representation formula; Theorem 3.1 of \cite{LT}]
\label{t-reprB}
Let $\theta_\gamma$  be a characteristic function (one representative) of $U_\gamma$,  $|\gamma|<1$, and let $\cK_{\theta_\gamma}$ and $\cM_\gamma =\cM_{\theta_\gamma}$ be the model subspace and the model operator respectively. 
Assume that the unit vectors $c=c^\gamma\in\mathfrak D_{\cM_{\theta_\gamma}^*}$, $c_1=c_1^\gamma \in \mathfrak D_{\cM_{\theta_\gamma}}$ agree. 
Let $\Phi^*_\gamma: L^2(\mu) \to \cK_{\theta_\gamma}$ be the unitary operator satisfying 
\[
\Phi_\gamma^* U_\gamma = \cM_{\theta_\gamma} \Phi_\gamma^* , 
\]
and such that $\Phi_\gamma^* b = c^\gamma$, $\Phi_\gamma^* b_1 = c_1^\gamma$. 

Then for all $f\in C^1(\T)$
\begin{align}
\label{f-reprB}
\Phi_\gamma^* f(z)&= A_\gamma(z) f(z) +
B_\gamma(z)\int\frac{f(\xi)-f(z)}{1- \overline\xi z}\, d\mu(\xi)
\end{align}
where $A_\gamma(z) = c^\gamma(z)$, $B_\gamma(z) = c^\gamma(z) - z c_1^\gamma(z)$.
\end{theo}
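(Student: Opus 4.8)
The plan is to follow the same route as in the proof of the Representation Theorem \ref{t-repr-V-unitary}: use the rank one structure of $U_\gamma$ to turn the intertwining relation into a recursion, compute $\Phi_\gamma^*$ on the trigonometric monomials, and then pass to $C^1(\T)$ by approximation. Recalling from \eqref{U_gamma-SA} that $U_\gamma = M_\xi + (\gamma-1)bb_1^*$ with $b\equiv\1$ and $b_1 = U^*b$, the identity $\Phi_\gamma^* U_\gamma = \cM_{\theta_\gamma}\Phi_\gamma^*$ (the adjoint form of \eqref{Phi*-intertwine}, noted in the theorem) becomes
\[
\Phi_\gamma^* M_\xi = \cM_{\theta_\gamma}\Phi_\gamma^* - (\gamma-1)(\Phi_\gamma^* b)\,b_1^* .
\]
Iterating this exactly as in the derivation of \eqref{repr-form-powers-01} gives, for $n\ge 0$,
\[
\Phi_\gamma^* M_\xi^n = \cM_{\theta_\gamma}^n\Phi_\gamma^* - (\gamma-1)\sum_{j=0}^{n-1}\cM_{\theta_\gamma}^j(\Phi_\gamma^* b)\,b_1^* M_\xi^{n-1-j},
\]
so that, applying both sides to $b\equiv\1$ and using the normalization $\Phi_\gamma^* b = c^\gamma$, the problem reduces to an explicit evaluation of the vectors $\cM_{\theta_\gamma}^j c^\gamma$ in the model space. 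The action of $\Phi_\gamma^*$ on the negative powers $\bar\xi^n$, $n\ge 0$, is treated in the same way starting from the adjoint intertwining relation $\Phi_\gamma^* U_\gamma^* = \cM_{\theta_\gamma}^*\Phi_\gamma^*$, using $U_\gamma^* = M_\xi^* + (\bar\gamma-1)b_1 b^*$ and the second normalization $\Phi_\gamma^* b_1 = c_1^\gamma$.

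For the model-space computation I would work in the Sz.-Nagy--Foia\c{s} transcription \eqref{K_theta}, where $\cM_{\theta_\gamma} = P_{\theta_\gamma}M_z\bigm|_{\cK_{\theta_\gamma}}$ by \eqref{model-02}, and where, since $\binom{\theta_\gamma}{\Delta}H^2$ is invariant under $M_z$, one has the convenient identity $\cM_{\theta_\gamma}^j c^\gamma = P_{\theta_\gamma}(z^j c^\gamma)$. Substituting the explicit formulas \eqref{c}, \eqref{c_1} for $c^\gamma$ and $c_1^\gamma$ (with $\theta_\gamma(0) = -\gamma$, so $1-|\theta_\gamma(0)|^2 = 1-|\gamma|^2$), the sum over $j$ is a geometric-type series whose summation produces the Cauchy kernel $1/(1-\bar\xi z)$, precisely as in Theorem \ref{t-repr-V-unitary}. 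After isolating the $f(z)$-term one is left with an expression of the shape $A_\gamma(z)f(z) + B_\gamma(z)\int\frac{f(\xi)-f(z)}{1-\bar\xi z}\,d\mu(\xi)$; comparing with the two normalizations shows that the pointwise-multiplication part must be $A_\gamma = c^\gamma$ (consistent with $\Phi_\gamma^*\1 = c^\gamma$, as the integral vanishes for $f\equiv1$), while matching the action on $b_1$ against $\Phi_\gamma^* b_1 = c_1^\gamma$ forces $B_\gamma = c^\gamma - zc_1^\gamma$. Here the identities \eqref{e-CHARgamma} expressing $\theta_\gamma$ through the Cauchy transforms of $\mu$, together with the scalar relation $|\theta_\gamma|^2+\Delta^2 = 1$ on $\T$, are what make the two components of the vector-valued identity \eqref{f-reprB} close up.

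By linearity this establishes \eqref{f-reprB} for every trigonometric polynomial. To reach all $f\in C^1(\T)$ it suffices to observe that both sides depend continuously on $f$: the left side because $\Phi_\gamma^*$ is bounded on $L^2(\mu)\supset C^1(\T)$, and the right side because the kernel $1/(1-\bar\xi z)$ is restrictedly bounded in the sense of Definition \ref{d:RestrBd}, so the singular-integral piece maps $C^1(\T)$ boundedly into $\cK_{\theta_\gamma}$; density of trigonometric polynomials in $C^1(\T)$ then finishes the argument. A cleaner variant avoids computing the $\cM_{\theta_\gamma}^j c^\gamma$ altogether: define $\Psi$ by the right-hand side of \eqref{f-reprB}, verify directly that $\Psi$ maps $C^1(\T)$ into $\cK_{\theta_\gamma}$, that $\Psi(U_\gamma p) = \cM_{\theta_\gamma}(\Psi p)$ for trigonometric polynomials $p$, and that $\Psi\1 = c^\gamma$; since $b$ is $*$-cyclic for $U_\gamma$, these three facts force $\Psi = \Phi_\gamma^*$ on the linear span of $\{U_\gamma^n\1:n\in\Z\}$, hence on all of $C^1(\T)$ by density. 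Either way, the main obstacle is the same: keeping track of the lower, $\Delta$-valued ``floor'' of the vector model space and of the projection $P_{\theta_\gamma}$ onto $\cK_{\theta_\gamma}$ (equivalently, the orthogonality to $\binom{\theta_\gamma}{\Delta}H^2$), which is exactly where the special structure \eqref{e-CHARgamma} of $\theta_\gamma$ as a Cauchy transform of $\mu$ has to be exploited.
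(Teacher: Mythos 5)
Your overall strategy (apply the intertwining relation to the monomials $\xi^n$, $\bar\xi^n$, sum a geometric series to produce the Cauchy kernel, then approximate) is the right one and matches the paper's, but there is a genuine gap at the central step. You iterate the relation in the form $\Phi_\gamma^* M_\xi^n = \cM_{\theta_\gamma}^n\Phi_\gamma^* - (\gamma-1)\sum_{j}\cM_{\theta_\gamma}^j(\Phi_\gamma^*b)\,b_1^*M_\xi^{n-1-j}$, which leaves you needing the vectors $\cM_{\theta_\gamma}^j c^\gamma = P_{\theta_\gamma}(z^j c^\gamma)$. These are \emph{not} $z^jc^\gamma$; each involves a $j$-dependent projection onto $\cK_{\theta_\gamma}$, so the resulting sum is not a geometric series and the claim that it closes up ``precisely as in Theorem \ref{t-repr-V-unitary}'' does not hold as stated --- in that theorem the iterated operator is the plain multiplication $M_z^{k-1}$, which is exactly what makes the summation elementary. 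The missing idea is the paper's observation that $\cM_{\theta_\gamma}$ is itself a rank one perturbation of $M_z$ with the \emph{same} row vector $(c_1^\gamma)^*$, namely $\cM_{\theta_\gamma} = M_z + (\gamma c^\gamma - zc_1^\gamma)(c_1^\gamma)^*$, so that the two rank one terms combine and the commutator with the unperturbed multiplications is rank one: $\Phi_\gamma^*M_\xi = M_z\Phi_\gamma^* + (c^\gamma - zc_1^\gamma)b_1^*$. Iterating \emph{this} identity produces powers of $M_z$ only, the geometric sum $\sum_k z^{n-k}\xi^k = (\xi^n - z^n)/(1-\bar\xi z)$, and the coefficient $B_\gamma = c^\gamma - zc_1^\gamma$ falls out directly rather than being reverse-engineered from the normalization on $b_1$.

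Two further points. For the negative powers you propose to take the adjoint intertwining relation and proceed symmetrically; that is the right instinct, but note (as the paper warns) that you cannot simply take the formal adjoint of the rank one commutator identity, because $zc_1^\gamma\notin\cK_{\theta_\gamma}$ in general --- one must separately compute $\cM_{\theta_\gamma}^* = M_{\bar z} + (\bar\gamma c_1^\gamma - M_{\bar z}c^\gamma)(c^\gamma)^*$ and repeat the argument, after which the formulas for $\Phi_\gamma^*\xi^n$ and $\Phi_\gamma^*\bar\xi^n$ happen to coincide. Your ``cleaner variant'' (define $\Psi$ by the right-hand side and verify it intertwines and maps into $\cK_{\theta_\gamma}$) is a legitimate alternative in principle, but the verification that $\Psi f\perp\binom{\theta_\gamma}{\Delta_\gamma}H^2$ and that $\Psi$ intertwines requires exactly the explicit computations you are deferring, and agreement on $\{U_\gamma^n\1:n\ge0\}$ alone does not determine $\Psi$ on all of $L^2(\mu)$ since $U_\gamma$ is only a contraction; you would also need the adjoint relation or the second normalization to cover the negative powers.
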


\begin{proof}[Idea of the proof]
To some extent we mirror the proof of Theorem \ref{t-repr-V-unitary}. However, several miracles occur (beyond the fact that we are now dealing with the vector-valued setting of the model space makes the computations are more cumbersome):

Again, we begin with the intertwining relation $\Phi_\gamma^* U_\gamma = \cM_{\theta_\gamma} \Phi^*_\gamma$ and evaluate the projection of the model operator
\begin{align}
\label{MOD}
\cM_{\theta_\gamma}  & = M_z -zc_1^\gamma (c_1^\gamma)^* - \theta_\gamma(0) c^\gamma (c_1^\gamma)^*  
 = M_z + (\gamma c^\gamma - zc_1^\gamma )(c_1^\gamma)^*  .
\end{align}
We notice that the model operator $ \cM_{\theta_\gamma}$ on $\cK_\te$ is a rank one perturbation of the unitary $M_z$, and the operator $U_\gamma$ on $L^2(\mu)$ is a rank one perturbation of the unitary $U_1$ (multiplication by the independent variable). So we expect that the commutator $\Phi^*_\gamma U_1 -  M_z \Phi_\gamma^*$ is at most of rank 2. But in fact, it turns out to be of rank one!

Indeed, the intertwining relation $\Phi_\gamma^* U_\gamma = \cM_{\theta_\gamma} \Phi^*_\gamma$ can be rewritten as
\begin{align*}
\Phi^*_\gamma U_1 + (\gamma-1) c^\gamma b_1^* = M_z \Phi_\gamma^* + (\gamma c^\gamma -c_2^\gamma) b_1^*
\end{align*}
(here we used that $\Phi^*_\gamma b =c^\gamma$ and $(c_1^\gamma)^* \Phi_\gamma^* = (\Phi_\gamma c_1^\gamma)^* = b_1^*$), and therefore
\begin{align}
\label{COMM}
\Phi_\gamma^* U_1 = M_z \Phi_\gamma^*  + (c^\gamma-zc_1^\gamma) b_1^*. 
\end{align}

From here, we proceed in analogy to the proof of \ref{t-repr-V-unitary} to obtain a formula for $\Phi_\gamma^* \xi^n$.

The formula for  $\Phi_\gamma^* \bar\xi^n$ cannot be computed by simply taking the formal adjoint of the commutation relation \eqref{COMM}. This is due to the fact that in general $zc_1^\gamma\notin\cK_\te$. Instead we compute the adjoint of the model operator in analogy to \eqref{MOD}
\begin{align*}
\cM^*_{\theta_\gamma}  & = M_{\overline z} - M_{\overline z} c^\gamma (c^\gamma)^* - \overline{\theta(0)} c_1^\gamma (c^\gamma)^* 
        = M_{\overline z} + (\overline \gamma c_1^\gamma - M_{\overline z} c^\gamma ) (c^\gamma)^* .
\end{align*}

We find ourselves in the lucky situation that the formulas for $\Phi_\gamma^* \xi^n$ and $\Phi_\gamma^* \bar\xi^n$ turn out to be the same.
\end{proof}

In the Sz.-Nagy--Foia\c s transcription we derive concrete formulas.
For $\gamma=0$ we have $\te_0(0) = 0$ and by \eqref{theta_0-theta_gamma} we obtain $\te_\gamma(0) = -\gamma$. With this, the vector-valued functions $A_\gamma(z)$ and $B_\gamma(z)$ in the universal representation formula \eqref{f-reprB} evaluate to
\begin{align}\label{e-AB}
A_\gamma(z) 
& = c^\gamma(z) = (1-|\gamma|^2)^{-1/2} 
\left(\begin{array}{c} 1 + \overline\gamma  \theta_\gamma(z) \\ \overline\gamma \Delta_\gamma(z) \end{array}\right) 
= \left(\begin{array}{c}  \frac{(1-|\gamma|^2)^{1/2}}{1 - \overline\gamma  \theta_0(z)} \\ \frac{\overline\gamma\Delta_0(z)}{| 1- \overline\gamma \theta_0(z)|} \end{array}\right)
,
\\
B_\gamma(z)
 & = c^\gamma(z) - z c_1^\gamma(z)  = (1-|\gamma|^2)^{-1/2} 
\left(\begin{array}{c} 1 + (\overline\gamma  -1)\theta_\gamma(z) -\gamma\\ (\overline\gamma -1)\Delta_\gamma(z) \end{array}\right) \\
\notag
&  \qquad\qquad =
\left(\begin{array}{c}  (1-|\gamma|^2)^{1/2}
{(1-\theta_0(z))}/{(1 - \overline\gamma  \theta_0(z))} \\ (\overline\gamma -1) 
{\Delta_0(z)}/{| 1- \overline\gamma \theta_0(z)|} \end{array}\right) , 
\end{align}
where $\Delta_\gamma=(1-|\theta_\gamma|^2)^{1/2}$.

\subsection{Singular integral operators and a representation for \texorpdfstring{$\Phi_\gamma^*$}{Phi<sup>*} in the Sz.-Nagy--\texorpdfstring{Foia\c s}{Foias} transcription 
}

In this section we get a representation of $\Phi_\gamma^*$ adapted to the  Sz.-Nagy--Foia\c s transcription, similar to the representations given in Theorem \ref{reg-2}.

We  first note that for $v(\xi)=|B_\gamma(\xi)|^2$ the kernel $K(z,\xi) =1/(1-\overline \xi z)$ is an $L^2(\mu)\to L^2(v)$ restrictedly bounded kernel, see Definition \ref{d:RestrBd}. Indeed, taking $C^1$ functions $f$ and $g$ with separated compact supports we get that 
\[
(\cV_\gamma f, g) = \int_\T \frac{\bigl(B(z)f(\xi), g(z)\bigr)\ci\cH}{1-\overline \xi z} d\mu(\xi) \frac{|dz|}{2\pi}, 
\]
and standard approximation reasoning extend this formula to all bounded functions with separated supports. 
But that means that the vector-valued kernel%
\footnote{We did not discuss singular integral operators with vector-valued kernels, but the extension of the theory presented in Section \ref{s-SIO} to the case of kernels with values in $\R^d$ or $\C^d$ is trivial and we omit it.}
$B_\gamma(z)/(1-\overline\xi z)$ is a kernel of a singular integral operator $L^2(\mu)\to L^2$ with norm $1$, and so it is  $L^2(\mu)\to L^2$ restrictedly bounded (with restricted norm at most $1$). 

A standard renormalization argument then implies the $L^2(\mu)\to L^2(v)$ restricted \linebreak
boundedness of the scalar kernel $1/ (1-\overline \xi z) $. 

Therefore, as we discussed in Section \ref{s-SIO}, see Theorem \ref{t:reg-bd-01} and Remark \ref{remark}, the regularized operators $T_r$ with kernel $K_r(z, \xi)= 1/(1-r\overline \xi z)$ are uniformly bounded operators $L^2(\mu)\to L^2(v)$, so the operators $B_\gamma T_r$ are uniformly bounded $L^2(\mu)\to L^2$. 

On the other hand, the boundary values of the Cauchy transform $R$ (defined in \eqref{CauchyTrans}) exist a.e.~with respect to Lebesgue measure by the classical theory of Hardy spaces; it is easier than for the operators $\cV_\alpha$, since we do not need a.e.~convergence with respect to a singular measure here. 

In combination with the uniform bounds we can see the existence of weak operator topology limit
$$
T_{\pm}:=\text{w.o.t.-}\lim_{r\to 1^{\mp}} T_r.
$$
Note also that $T_\pm$ can be defined as a.e.~limits, $T_{\pm}f = \lim_{r\to 1^{\mp}} T_r f$.

\begin{theo}\label{t-repr-SNF}
Operator $\Phi_\gamma^*$ can be represented in the Sz.-Nagy--Foia\c{s} transcription as
\begin{align*}
(1-|\gamma|^2)^{1/2}\Phi_\gamma^* f
 & = 
\left( \begin{array}{c} 0 \\  (\overline\gamma - (\overline\gamma - 1) T_+ \1 )\Delta_\gamma \end{array}
\right) f 
+
\left( \begin{array}{c} (1 + \overline\gamma \theta_\gamma) / T_+\1 \\  (\overline\gamma - 1) \Delta_\gamma \end{array}
\right) T_+ f 
\\
\notag
 & 
=
\left( \begin{array}{c} 0 \\   \frac{1-\overline\gamma\theta_0}{|1-\overline\gamma\theta_0|}   T_+ \ID \cdot\Delta_0 \end{array}
\right) f 
+
\left( \begin{array}{c} \frac{1-|\gamma|^2}{1 - \overline\gamma \theta_0} \cdot\frac1{ T_+\ID} \\  (\overline\gamma - 1) \frac{(1-|\gamma|^2)^{1/2}}{|1-\overline\gamma\theta_0|} \Delta_0 \end{array}
\right) T_+ f 
\end{align*}
for $f\in L^2(\mu)$.
\end{theo}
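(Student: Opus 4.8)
The plan is to deduce Theorem~\ref{t-repr-SNF} from the universal representation formula~\eqref{f-reprB} of Theorem~\ref{t-reprB}, together with the explicit expressions~\eqref{e-AB} for $A_\gamma=c^\gamma$ and $B_\gamma=c^\gamma-zc_1^\gamma$ in the Sz.-Nagy--Foia\c s transcription, by running the limiting procedure that turned the difference-quotient formula~\eqref{repr-V-unitary} into its regularized $T_\pm$-form (the unitary analogue of Theorem~\ref{reg-2}), now in the $\C^2$-valued model space. First I would insert the regularized Cauchy kernel: for $0\le r<1$,
\[
\int_\T\frac{f(\xi)-f(z)}{1-r\overline\xi z}\,d\mu(\xi)=T_rf(z)-f(z)\,T_r\1(z),
\]
and note that for $f\in C^1(\T)$ the integrand is bounded uniformly in $r\in[\tfrac12,1)$, since $|1-r\overline\xi z|^2=(1-r)^2+r|1-\overline\xi z|^2\ge\tfrac12|1-\overline\xi z|^2$ while $|f(\xi)-f(z)|\le\|f'\|_\infty|1-\overline\xi z|$ for $z,\xi\in\T$. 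Letting $r\to1^-$ and combining dominated convergence with the fact (established just before the theorem) that $T_rf(z)=R(f\,d\mu)(rz)$ and $T_r\1(z)=R\mu(rz)$ converge a.e.~(Lebesgue) to the nontangential boundary values defining $T_+$, I obtain, for every $f\in C^1(\T)$,
\[
\Phi_\gamma^*f=A_\gamma f+B_\gamma\bigl(T_+f-f\,T_+\1\bigr)=(A_\gamma-B_\gamma T_+\1)\,f+B_\gamma T_+f\qquad\text{a.e.~on }\T.
\]

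To extend this from the dense set $C^1(\T)$ to all of $L^2(\mu)$ it suffices, since $\Phi_\gamma^*$ is unitary, to check that $f\mapsto(A_\gamma-B_\gamma T_+\1)f+B_\gamma T_+f$ is bounded from $L^2(\mu)$ into the ambient unweighted space $L^2(\C^2)$ of the transcription. Boundedness of $f\mapsto B_\gamma T_+f$ is exactly what was recorded just above the statement: $1/(1-\overline\xi z)$ is $L^2(\mu)\to L^2(|B_\gamma|^2)$ restrictedly bounded, so by Theorem~\ref{t:reg-bd-01} and Remark~\ref{remark} the operators $T_r$ are uniformly bounded $L^2(\mu)\to L^2(|B_\gamma|^2)$, hence the $B_\gamma T_r$ are uniformly bounded into $L^2(\C^2)$ and $B_\gamma T_+$ is their w.o.t.~limit. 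For the remaining ``diagonal'' piece I would use the identity $T_+\1=R\mu=1/(1-\theta_0)$ — which follows from $R_1\mu=R\mu-1$ and~\eqref{e-CHARgamma}, whence $\theta_0=1-1/R\mu$ — together with~\eqref{e-AB} to verify, by direct substitution, that the top coordinate of $A_\gamma-B_\gamma T_+\1$ vanishes identically and its bottom coordinate equals $(1-|\gamma|^2)^{-1/2}\Delta_\gamma(\overline\gamma-(\overline\gamma-1)T_+\1)$. Then, from $\re R\mu\ge\tfrac12$ and the boundary identity $\Delta_0^2\,|R\mu|^2=2\re R\mu-1=w$ a.e.\ (with $w=d\mu\ti{ac}/dm$ and $m$ the normalized Lebesgue measure on $\T$) one gets $\Delta_0\le2\sqrt w$ and $|\Delta_0\,R\mu|=\sqrt w$, so $|A_\gamma-B_\gamma T_+\1|\lesssim_\gamma\sqrt w$; the multiplication operator $f\mapsto(A_\gamma-B_\gamma T_+\1)f$ is therefore bounded $L^2(\mu)\to L^2(\C^2)$ because $\int_\T w|f|^2\,dm\le\|f\|_{L^2(\mu)}^2$. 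Two bounded operators agreeing on a dense subspace coincide, which gives the first displayed formula of the theorem.

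The second displayed formula is then pure algebra: substitute $\theta_\gamma=(\theta_0-\gamma)/(1-\overline\gamma\theta_0)$ (see~\eqref{theta_0-theta_gamma}), $\Delta_\gamma=(1-|\gamma|^2)^{1/2}\Delta_0/|1-\overline\gamma\theta_0|$, and $T_+\1=1/(1-\theta_0)$ into the first one, and collect the coefficients of $f$ and of $T_+f$.

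I expect the main obstacle to lie not in the limiting step — which is routine once the restricted boundedness of $1/(1-\overline\xi z)$ relative to the weight $|B_\gamma|^2$ is granted, as that is quoted from Section~\ref{s-SIO} — but in showing that the diagonal operator $A_\gamma-B_\gamma T_+\1$ actually maps into the \emph{unweighted} space $L^2(\C^2)$: its two summands $A_\gamma$ and $B_\gamma T_+\1$ are individually unbounded as operators $L^2(\mu)\to L^2(\C^2)$ (since $T_+\1=R\mu$ need not lie in $L^\infty$ and $\|\cdot\|_{L^2(m)}$ is not dominated by $\|\cdot\|_{L^2(\mu)}$), and one must exploit the precise cancellation encoded in~\eqref{e-AB} — the exact vanishing of the top coordinate together with the Clark-type pointwise identity $\Delta_0^2\,|R\mu|^2=w$ linking the defect function of the model to the absolutely continuous density of $\mu$ — to see that the difference is controlled by $\sqrt w$. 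The vector-valued bookkeeping behind~\eqref{e-AB} and behind the representation~\eqref{MOD} of the model operator is a further, if mechanical, ingredient.
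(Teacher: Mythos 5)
Your proposal is correct and follows essentially the same route as the paper's own proof: replace $1-\overline\xi z$ by $1-r\overline\xi z$, pass to the limit $r\to1^-$ to rewrite the universal formula as $(A_\gamma-B_\gamma T_+\1)f+B_\gamma T_+f$, use $T_+\1=1/(1-\theta_0)$, and extend by continuity; the paper compresses the remaining work into ``technical computations,'' which you have filled in correctly (the vanishing of the top component of $A_\gamma-B_\gamma T_+\1$ and the bound $\Delta_0|R\mu|=\sqrt w$ are exactly the cancellations needed for the $L^2(\mu)\to L^2$ boundedness of the multiplication part).
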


As expected this formula reduces to the normalized Cauchy transform for $\gamma=0$ and inner functions $\te$. To see this, we notice that the second component collapses as $\Delta(z) = (1-|\te(z)|^2)^{1/2} = 0$ Lebesgue a.e.~on $\T$, and that $T_+f/T_+\ID$ is equal to the normalized Cauchy transform.

\begin{proof}[Idea of the proof]
For smooth functions $f$ we replace the term $1-\bar\xi z$ in the denominator of \eqref{f-reprB} by $1 - r \bar\xi z$ and take the limit as $r\to 1^-$. We obtain the same formula \eqref{f-reprB}. Since we also have weak convergence of the operators we have
\[
(T_+ f) (z) - f (z) (T_+ \ID)(z)  = \int_\T \frac{f(\xi) - f(z) }{1- \overline\xi z} d\mu(\xi), \qquad z\in\T  . 
\]

We extend the operator by continuity to all of $L^2(\mu)$ and derive
\begin{align}
\label{e-TPlus}
T_+\ID = 1/(1-\te_0)
\end{align}
from \eqref{CauchyTrans} through \eqref{theta_0-theta_gamma}. Technical computations then yield the desired formula.
\end{proof}

Interestingly, similar arguments show that
\begin{align}
\label{e-TMinus}
T_-\ID = -\bar\te_0/(1-\bar\te_0).
\end{align}

\subsection{Representing \texorpdfstring{$\Phi^*$}{Phi<sup>*} in the de Branges--Rovnyak transcription}
We translate the formula in the latter Theorem \ref{t-repr-SNF} from the Sz.-Nagy--Foia\c{s} transcription to  the de Branges--Rovnyak transcription, rather than starting from the universal representation formula in Theorem \ref{t-reprB}. This strategy seemed less cumbersome as we circumvent having to re-do much of the subtle work of regularizing singular integral operators. Also, we found it refreshing to understand the connection between the transcriptions.

By virtue of  the definition of the Sz.-Nagy--Foia\c{s} model space $\cK_\theta$, see \eqref{K_theta}, a function 
\[
g=\left(\begin{array}{c} g_1\\ g_2\end{array}\right) \in \left(\begin{array}{c} H^2 \\ \clos\Delta L^2 \end{array} \right)
\] 
is in $\cK_\theta$ if and only if 
\begin{align}
\label{deBrangesRepr-01}
g_- := \overline \theta g_1 + \Delta g_2 \in H^2_- := L^2 (\T) \ominus H^2 .
\end{align}
Note, that knowing $g_1$ and $g_-$ one can restore $g_2$ on $\T$:
\[
g_2\Delta = g_-  -   g_1\overline \theta. 
\]

The equality \eqref{deBrangesRepr-01} means that the pair $g_+ = g_1$ and $g_-$ belongs to the de Branges--Rovnyak space, see \eqref{deBrangesRepr}. 
It is also not hard to check that the norm of the pair $(g_1, g_-)$ in the Branges--Rovnyak space (i.e.~in the weighted space $L^2(W)$, 
$W=W_\theta^{[-1]}$, see Subsection~\ref{s:deBrangesRepr}) coincides with the norm of the pair $(g_1, g_2)$ in the Sz.-Nagy--Foia\c{s} 
space (i.e.~in non-weighted $L^2$). Indeed, we have
\begin{align*}
\left(\begin{array}{c} g_1 \\ g_- \end{array}\right) = 
\left(\begin{array}{cc} 1 & 0 \\ \overline\theta & \Delta \end{array}\right)
\left(\begin{array}{c} g_1 \\ g_2 \end{array}\right)\,.
\end{align*}
Let $B$ be a ``Borel support'' of $\Delta$, i.e.~the set where one of the representative from the equivalence class of $\Delta$ is different from $0$. 
A direct computation shows that for 
\[
W_\theta = \left(\begin{array}{cc} 1 & \theta \\ \overline\theta & 1 \end{array}\right)
\]
we have a.e.~on $\T$ 
\begin{align*}
\left(\begin{array}{cc} 1 & \theta \\ 0 & \Delta \end{array}\right)
W_\theta^{[-1]}\left(\begin{array}{cc} 1 & 0 \\ \overline\theta & \Delta \end{array}\right)
=
\left(\begin{array}{cc} 1 & 0 \\ 0 & \ID\ci B \end{array}\right) ,  
\end{align*}
which gives the desired equality of the norms. 
(Here $ \ID\ci B$ denotes the characteristic function of $B$.)

Note that functions in $H^2_-$ admit analytic continuation to the exterior of the unit disc, so a function in $\cK_\theta$ is determined by the boundary values of two functions $g_1$ and $g_-$ analytic in $\D$ and $\text{ext}( \overline\D)$ respectively. 

Since the first component of a function in the Branges--Rovnyak space is the same as in the Sz.-Nagy--Foia\c s space and by virtue of Theorem \ref{t-repr-SNF} we immediately know
\begin{align}
\label{g_+}
g_+(z) = g_1(z) 
= \frac{(1-|\gamma|^2)^{1/2}}{1-\overline{\gamma}\theta_0}\, \frac{T_+f}{T_+\ID} 
 \,,
\end{align}
where $T_+$ was defined in the paragraph prior to Theorem \ref{t-repr-SNF}.

The second component $g_-=g_-^\gamma$ is analytic on $\text{ext}(\overline\D)$. Therefore, we do need to return to the universal representation formula. After some reformulation we observe.

\begin{theo}[Theorem 5.5 of \cite{LT}]
\label{t:g_-}
Let $\mu$ be not the Lebesgue measure. 
Then the function $g_-=g^\gamma_-$ is given by 
\begin{align}
\label{g_-}
g^\gamma_- = (1-|\gamma|^2)^{-1/2} \left( \overline \theta_\gamma +\overline\gamma \right) \frac{T_-f}{T_-\ID} 
& = \frac{(1-|\gamma|^2)^{1/2} \overline\theta_0 }{1-\gamma \overline\theta_0}  
\cdot 
\frac{T_-f}{T_-\ID}  
 \,.
\end{align}
\end{theo}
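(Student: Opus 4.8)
The plan is to transfer the Sz.-Nagy--Foia\c{s} formula of Theorem~\ref{t-repr-SNF} into the de Branges--Rovnyak transcription by means of the dictionary \eqref{deBrangesRepr-01}: if $\Phi_\gamma^* f=(g_1,g_2)$ in the Sz.-Nagy--Foia\c{s} model space, the corresponding element of the de Branges--Rovnyak space \eqref{deBrangesRepr} is the pair $(g_+,g_-)=\bigl(g_1,\ \overline{\theta_\gamma}g_1+\Delta_\gamma g_2\bigr)$, the membership $g_-\in H^2_-$ being automatic. Since the first component $g_+=g_1$ has already been identified in \eqref{g_+}, only $g_-=\overline{\theta_\gamma}g_1+\Delta_\gamma g_2$ remains. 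Because $g_-$ lives on the ``anti-analytic'' side while Theorem~\ref{t-repr-SNF} is built around the $T_+$ regularization, I would not start from that theorem but instead return to the \emph{universal} representation formula \eqref{f-reprB}, with the explicit vector functions $A_\gamma,B_\gamma$ from \eqref{e-AB}, and regularize the singular integral there by letting $r\to1^+$.

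Concretely, write $\Phi_\gamma^* f(z)=A_\gamma(z)f(z)+B_\gamma(z)\,I(z)$ with $I(z):=\int_\T\frac{f(\xi)-f(z)}{1-\overline\xi z}\,d\mu(\xi)$, so that $g_-=\bigl(\overline{\theta_\gamma}A_\gamma^{(1)}+\Delta_\gamma A_\gamma^{(2)}\bigr)f+\bigl(\overline{\theta_\gamma}B_\gamma^{(1)}+\Delta_\gamma B_\gamma^{(2)}\bigr)I$, where $(1),(2)$ index the two coordinates. Substituting \eqref{e-AB} and using $\Delta_\gamma^2=1-|\theta_\gamma|^2$, the $|\theta_\gamma|^2$-terms cancel against $\Delta_\gamma^2$ and the scalar coefficients collapse to
\[
\overline{\theta_\gamma}A_\gamma^{(1)}+\Delta_\gamma A_\gamma^{(2)}=(1-|\gamma|^2)^{-1/2}(\overline{\theta_\gamma}+\overline\gamma),
\qquad
\overline{\theta_\gamma}B_\gamma^{(1)}+\Delta_\gamma B_\gamma^{(2)}=(1-|\gamma|^2)^{-1/2}\bigl(\overline{\theta_\gamma}(1-\gamma)-(1-\overline\gamma)\bigr).
\]

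For $f\in C^1(\T)$ I would then replace $1-\overline\xi z$ by $1-r\overline\xi z$ and let $r\to1^+$: the regularized operators are uniformly bounded (Theorem~\ref{t:reg-bd-01} and Remark~\ref{remark}) and the integrand obeys the uniform near-diagonal bound $|\xi-z|/|1-r\overline\xi z|\lesssim1$, so dominated convergence yields $I=T_-f-f\,T_-\1$. The crux is then the algebraic identity
\[
\overline{\theta_\gamma}+\overline\gamma=\bigl(\overline{\theta_\gamma}(1-\gamma)-(1-\overline\gamma)\bigr)\,T_-\1 ,
\]
which one checks by inserting $T_-\1=-\overline{\theta_0}/(1-\overline{\theta_0})$ from \eqref{e-TMinus} and $\theta_\gamma=(\theta_0-\gamma)/(1-\overline\gamma\theta_0)$ from \eqref{theta_0-theta_gamma}: both sides reduce to $(1-|\gamma|^2)\overline{\theta_0}/(1-\gamma\overline{\theta_0})$. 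This kills the $f$-term, leaving $g_-=(1-|\gamma|^2)^{-1/2}\bigl(\overline{\theta_\gamma}(1-\gamma)-(1-\overline\gamma)\bigr)T_-f$; re-expressing the coefficient through $\theta_0$ via \eqref{theta_0-theta_gamma} turns this into $-(1-|\gamma|^2)^{1/2}(1-\overline{\theta_0})/(1-\gamma\overline{\theta_0})\,T_-f$, which is exactly \eqref{g_-} once $T_-\1$ is put back in. Finally one extends from $C^1(\T)$ to all of $L^2(\mu)$ by density, using continuity of $\Phi_\gamma^*$ and of $T_-$; the hypothesis that $\mu$ is not Lebesgue measure forces $\theta_0\not\equiv0$, hence $T_-\1\neq0$ Lebesgue-a.e., so the quotient form of \eqref{g_-} is meaningful.

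I expect the step needing the most care to be the identity \eqref{e-TMinus} for $T_-\1$: it is not a formal consequence of \eqref{e-TPlus} and needs its own boundary-value argument, cleanest via the elementary relation $\frac{1}{1-r\overline\xi z}=1-\frac{1}{1-r^{-1}\xi\overline z}$ for $z,\xi\in\T$ and $r>1$, which gives $T_-\1=\mu(\T)-\overline{T_+\1}=1-\overline{1/(1-\theta_0)}$. Beyond that, the delicate bookkeeping is keeping track of which one-sided limit ($r\to1^-$ for the analytic coordinate $g_+$, $r\to1^+$ for the anti-analytic coordinate $g_-$) places each piece into the correct Hardy space, together with the routine verification that $1-\gamma\overline{\theta_0}$ and $T_-\1$ are nonzero Lebesgue-a.e., so that every division is legitimate.
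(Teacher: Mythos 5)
Your proposal is correct and follows essentially the same route the paper indicates: keep $g_+=g_1$ from Theorem~\ref{t-repr-SNF}, but for the anti-analytic component return to the universal representation formula \eqref{f-reprB}, form $g_-=\overline{\theta_\gamma}g_1+\Delta_\gamma g_2$ so that the $\Delta_\gamma^2=1-|\theta_\gamma|^2$ cancellations collapse the coefficients to $\overline{\theta_\gamma}+\overline\gamma$ and $\overline{\theta_\gamma}(1-\gamma)-(1-\overline\gamma)$, and regularize with $r\to1^+$ so that \eqref{e-TMinus} eliminates the local term and produces $T_-f/T_-\ID$. Your algebra (both reductions to $(1-|\gamma|^2)\overline{\theta_0}/(1-\gamma\overline{\theta_0})$, and the reflection identity giving $T_-\ID=1-\overline{T_+\ID}$) checks out.
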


\subsection{Formulas for \texorpdfstring{$\Phi_\gamma$}{Phi<sub>gamma}}
A representation of the Clark operator $\Phi_\gamma$ is given in terms of the components $g_+$ and $g_-$ of a vector in the de Branges--Rovnyak transcription. This formula is given piecewise. For a function $f\in L^2(\mu)$ we denote by $f\ti{a}$ and $f\ti{s}$ its ``absolutely continuous" and ``singular" parts, respectively. 
Formally, $f\ti s$ and $f\ti a$ can be defined as Radon--Nikodym derivatives $f\ti s = d(f\mu)\ti s/ d\mu\ti s$, $f\ti a = d(f\mu)\ti a/ d\mu\ti a$. 

Let $w$ denote the density of the absolutely continuous part of $d\mu$, i.e.~$w = d\mu/dx\in L^1$.

\begin{theo}
\label{t:ReprPhi}
Let $g = \left(\begin{array}{c} g_+\\ g_- \end{array}\right) \in \cK_{\theta_\gamma}$ (in the de Branges--Rovnyak transcription) and let $f\in L^2(\mu)$, $f= \Phi_\gamma g$. Then
\begin{enumerate}
\item the non-tangential boundary values of the function 
\[
z\mapsto\frac{1-\overline\gamma}{(1-|\gamma|^2)^{1/2}} g_+(z), \qquad z\in \D
\]
exist and coincide with $f\ti s$ $\mu\ti s$-a.e.~on $\T$. 
 
\item for the ``absolutely continuous'' part $f\ti a$ of $f$ 
\[
(1-|\gamma|^2)^{1/2} w f\ti a  = \frac{1-\overline\gamma\theta_0}{1-\theta_0} g_+ + \frac{1-\gamma\overline\theta_0}{1-\overline\theta_0} g_-
\]
a.e.~on $\T$.  
\end{enumerate}
\end{theo}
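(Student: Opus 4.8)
The plan is to invert the formulas for $\Phi_\gamma^*$ already obtained in the de Branges--Rovnyak transcription, reading off from them the singular and the absolutely continuous parts of $f=\Phi_\gamma g$. Since $\Phi_\gamma$ is unitary, the relation $f=\Phi_\gamma g$ is the same as $g=\Phi_\gamma^* f$, and then the components of $g$ are given by \eqref{g_+} and Theorem~\ref{t:g_-}. Combining $g_+=(1-|\gamma|^2)^{1/2}(1-\overline\gamma\theta_0)^{-1}\,T_+f/(T_+\ID)$ with \eqref{e-TPlus}, and $g_-=(1-|\gamma|^2)^{1/2}\overline\theta_0(1-\gamma\overline\theta_0)^{-1}\,T_-f/(T_-\ID)$ with \eqref{e-TMinus}, one obtains
\[
\frac{1-\overline\gamma\theta_0}{1-\theta_0}\,g_+ = (1-|\gamma|^2)^{1/2}\,T_+f,
\qquad
\frac{1-\gamma\overline\theta_0}{1-\overline\theta_0}\,g_- = -(1-|\gamma|^2)^{1/2}\,T_-f ,
\]
both valid a.e.~with respect to Lebesgue measure on $\T$.

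For part~(1) I would keep $g_+$ written, for $z\in\D$, as $g_+(z)=(1-|\gamma|^2)^{1/2}\bigl(1-\overline\gamma\theta_0(z)\bigr)^{-1}\,R(f\mu)(z)/R\mu(z)$, with $R$ the Cauchy transform \eqref{CauchyTrans}; this matters because $g_+\in H^2$ a priori has non-tangential boundary values only Lebesgue-a.e., whereas part~(1) is a statement $\mu\ti{s}$-a.e. By Poltoratskii's theorem \cite[Theorem~2.7]{NONTAN}, the normalized Cauchy transform $R(f\mu)/R\mu$ has non-tangential boundary values equal to $f$ at $\mu\ti{s}$-a.e.~point. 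On the other hand $R\mu=1/(1-\theta_0)$ on $\D$ (the analytic version of \eqref{e-TPlus}; cf.~\eqref{e-CHARgamma}), and the real part of $R\mu$ is, up to an additive constant, half the Poisson integral of $\mu$; since $\mu\ti{s}$ is carried by the set where this Poisson integral tends non-tangentially to $+\infty$, it follows that $R\mu\to\infty$ and hence $\theta_0=1-1/R\mu\to1$ non-tangentially $\mu\ti{s}$-a.e. Consequently $g_+\to(1-|\gamma|^2)^{1/2}(1-\overline\gamma)^{-1}f$ non-tangentially $\mu\ti{s}$-a.e.; multiplying by $(1-\overline\gamma)(1-|\gamma|^2)^{-1/2}$ and using that $f\ti{s}=f$ $\mu\ti{s}$-a.e.\ (because $f\in L^2(\mu)$ makes $f\mu\ti{s}$ the singular part of the Lebesgue decomposition of the measure $f\mu$) yields part~(1).

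For part~(2), adding the two identities from the first paragraph gives
\[
\frac{1-\overline\gamma\theta_0}{1-\theta_0}\,g_+ + \frac{1-\gamma\overline\theta_0}{1-\overline\theta_0}\,g_-
= (1-|\gamma|^2)^{1/2}\,(T_+-T_-)f \qquad \text{a.e.~on }\T,
\]
so it remains to identify $(T_+-T_-)f$ with $w f\ti{a}$ a.e.~Lebesgue. Expanding the kernel of $T_r$ in a geometric series, for $z\in\T$ one has $T_rf(z)=\sum_{n\ge0}r^n\widehat{f\mu}(n)\,z^n$ when $r<1$ and $T_rf(z)=-\sum_{n\ge1}r^{-n}\widehat{f\mu}(-n)\,z^{-n}$ when $r>1$, where $\widehat{f\mu}(n):=\int_\T\overline\xi^{\,n}f(\xi)\,d\mu(\xi)$; hence $(T_+-T_-)f(z)=\lim_{r\to1^-}\sum_{n\in\Z}r^{|n|}\widehat{f\mu}(n)\,z^n$, which is exactly the radial limit of the Poisson integral of the measure $f\mu$. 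By Fatou's theorem this limit equals, a.e.~Lebesgue, the density of the absolutely continuous part of $f\mu$; since $f\mu=fw\,dx+f\mu\ti{s}$ is the Lebesgue decomposition, that density is $wf$, and $wf=wf\ti{a}$ a.e.~Lebesgue because $f\ti{a}=f$ $\mu\ti{a}$-a.e. Combining with the displayed identity gives part~(2).

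The algebra and the geometric-series/Fatou computation of the jump $(T_+-T_-)f$ are routine. The main obstacle I expect is in part~(1): one must not pass to Lebesgue-a.e.~boundary values of $g_+$ and $\theta_0$ prematurely, but work with their analytic representatives on $\D$ and combine Poltoratskii's theorem with the (Fatou-type) fact that $\theta_0\to1$ along the carrier of $\mu\ti{s}$. A secondary point is the identification of the weak-operator-topology limits $T_\pm$ with the pointwise radial boundary values of $R(f\mu)$ used above, which is classical for the Lebesgue-a.e.~statement and, for the $\mu\ti{s}$-a.e.~statement, is the analogue of Lemma~\ref{l:nontan_f.mu}.
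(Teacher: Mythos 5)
Your proposal is correct and follows essentially the same route as the paper: part (2) is obtained by combining the boundary formulas \eqref{g_+}, \eqref{g_-} with \eqref{e-TPlus}, \eqref{e-TMinus} and the Fatou-type jump identity $T_+f-T_-f=wf$ (which the paper likewise derives from $T_rf-T_{1/r}f$ and the Poisson integral of $f\mu$), and part (1) is the expected application of Poltoratskii's theorem together with the fact that $\theta_0\to1$ non-tangentially $\mu\ti{s}$-a.e. Your algebra checks out, and your care in part (1) to work with the analytic representatives on $\D$ rather than Lebesgue-a.e.\ boundary values is exactly the point the paper's sketch leaves implicit.
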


We provide the idea of the proof. First consider statement $(2)$.
By taking the limit as $r\to 1$ in $T_rf - T_{1/r}f$ we prove the Fatou type result \cite[Lemma 5.6]{LT}:
\[
T_+ f - T_- f = w f\quad\text{a.e.~on }\T
\]
(with respect to the Lebesgue measure) for all $f\in L^2(\mu)$. 
Together with \eqref{e-TMinus} and \eqref{e-TPlus} we can use the representations \eqref{g_+} and \eqref{g_-} for $g_+$ and $g_-$ to see the desired result for the absolutely continuous part.

Statement $(1)$ uses Poltoratskii's theorem \cite[Theorem 2.7]{NONTAN}.

\subsection{Clark operator for other \texorpdfstring{$\alpha$, $|\alpha|=1$}{alpha, |alpha|=1}}
Consider the Clark operator $\Phi_{\alpha, \gamma} :\cK_{\theta_\gamma} \to L^2(\mu_\alpha)$, where $\mu_\alpha$, $|\alpha|=1$ is the spectral measure corresponding to the cyclic vector $b$ of the unitary operator $U_\alpha$. Operator $\Phi_{\alpha, \gamma}$ is a unitary operator, which intertwines the model operator $\cM_{\te_\gamma}$ and the c.n.u.~contraction $(U_\gamma)_\alpha$ which is the operator $U_\gamma$ in the spectral representation of the operator $U_\alpha$.

We deduce everything from the results we already obtained. First, let us write 
the c.n.u.~contraction $U_\gamma$, $|\gamma|<1$ as a rank one perturbation of the unitary operator $U_\alpha$, $|\alpha|=1$:
\[
U_\gamma = U + (\gamma -1 ) b b_1^*   = U + (\alpha -1) bb_1^* + (\gamma - \alpha) b b_1^* 
 = U_\alpha + (\gamma/\alpha -1 ) b \wt b_1^*,  
\]
where $\wt b_1=\overline\alpha b_1$. 


From now we can just read off the results for $\alpha\in\T$ from the results we already proved (for $\alpha=1$). 

But to be consistent, we need the operators $\Phi_{\alpha, \gamma}^*$ to agree. First we need them to the same model spaces, so let us fix the model spaces to be the ones we got for the case $\alpha=1$. Second, we want them to be consistent with respect to the operators $\cV_\alpha$ from Section \ref{s:UnitRk1}:
%
\begin{align}
\label{Phi-V}
\Phi_{\alpha, \gamma}^* =\Phi_\gamma^* \cV_\alpha^*  \,.
\end{align}
Then an appropriately interpreted  ``universal'' representation formula (Theorem \ref{t-reprB}) gives us a 
formula for $\Phi_{\alpha, \gamma}^*$. 

Namely, in the spectral representation of $U_\alpha$ the c.n.u.~contraction $U_\gamma$ is given by
\begin{align}
\label{U_alpha_gamma_01}
M_\xi + (\gamma/\alpha - 1) b^\alpha (b_1^\alpha)^*, 
\end{align}
where $b^\alpha = \cV_\alpha b$, $b_1^\alpha = \cV_\alpha\wt b_1 =\overline \alpha \cV_\alpha b_1$, which yields $b^\alpha =\1$, $b_1^\alpha(\xi) \equiv \overline \xi$, $\xi\in \T$.
Notice that 
\[
c^{\alpha, \gamma} = \Phi_{\alpha,\gamma}^* b^\alpha = \Phi^*_\gamma \cV_\alpha^* b^\alpha = \Phi_\gamma^* b = c^\gamma, 
\]
and that
\[
c_1^{\alpha, \gamma} =\Phi_{\alpha,\gamma}^* b_1^\alpha = \Phi^*_\gamma \cV_\alpha^* b_1^\alpha 
=\overline \alpha\Phi^*_\gamma  b_1 = \overline \alpha c_1^\gamma. 
\]

Therefore, to get the formula for $\Phi_{\alpha, \gamma}^*$ with $ \Phi_{\alpha, \gamma}^* b^\alpha = c^\gamma$ (i.e.~such that $\Phi_{\alpha,\gamma}^* \1 = c^\gamma$) one just has to replace in \eqref{f-reprB} $\mu$ by $\mu_\alpha$, 
and $c_1^\gamma$ by $\overline\alpha c_1^\gamma$ ($c^\gamma$ remains the same). Note, that as long as $c^\gamma$ and $c_1^\gamma$ are computed, the parameter $\gamma$ does not appear in \eqref{f-reprB}. 


Now let us get the representations in the Sz.-Nagy--Foia\c{s} and de Branges--Rovnyak transcriptions.  One of the ways to get the formula for $\Phi_{\alpha,\gamma}^*$ would be to take the ``universal formula'' above and then repeat the proofs of Theorem \ref{t-repr-SNF} and of Theorem \ref{t:g_-}.

But the there is a simpler (in our opinion) way, that allows us to get the result with almost no computations: one just have to ``translate'' Theorems \ref{t-repr-SNF}, \ref{t:g_-} to the spectral representation of $U_\alpha$. 

In both these theorems the characteristic function and the parameter $\gamma$ are included explicitly, so we need to see how they change when we move to the spectral representation of $U_\alpha$. 

If we want to apply know formulas \eqref{e-CHARgamma}, they give us the characteristic function $\theta^{\alpha}_{\gamma/\alpha}$ of the  operator \eqref{U_alpha_gamma_01} with $b_1^\alpha$ and $b^\alpha$ taken for the basis vectors in the corresponding defect subspaces.

So, by replacing $\mu$ with $\mu_\alpha$ and $\gamma$ with $\gamma/\alpha$ in \eqref{e-CHARgamma} and \eqref{d-Delta} we get the characteristic function and the defect given by 
\[
\theta^\alpha_{\gamma/\alpha} = \overline \alpha\theta_\gamma\,, \qquad \text{and} \qquad \Delta_\gamma{\alpha} = \Delta_\gamma\,. 
\]

Substituting these functions to \eqref{e-AB}  and replacing $\gamma$ there by $\gamma/\alpha$ we get a representation formula for the adjoint of the Clark operator mapping $L^2(\mu_\alpha)\to\cK_{\bar\alpha\theta_\gamma} $ in Sz.-Nagy--Foia\c s transcription,
\begin{align}
\label{AltRepr-alpha}
 (1-|\gamma|^2)^{1/2}
 \wt \Phi_{\alpha,\gamma}^* f
 & = 
\left( \begin{array}{c} 0 \\  (\overline\gamma/\overline\alpha - (\overline\gamma/\overline\alpha - 1) T^\alpha_+ \1 )\Delta_\gamma \end{array}
\right) f 
+
\left( \begin{array}{c} (1 + \overline\gamma \theta_\gamma) / T^\alpha_+1 \\  (\overline\gamma/\overline\alpha - 1) \Delta_\gamma \end{array}
\right) T^\alpha_+ f 
\\
\notag
 & = 
\left( \begin{array}{c} 0 \\   \frac{1-\overline\gamma\theta_0}{|1-\overline\gamma\theta_0|}   T_+^\alpha \1 \cdot\Delta_0 \end{array}
\right) f 
+
\left( \begin{array}{c} \frac{1-|\gamma|^2}{1 - \overline\gamma \theta_0} \cdot\frac1{ T^\alpha_+\1} \\  (\overline\gamma /\overline\alpha - 1) \frac{(1-|\gamma|^2)^{1/2}}{|1-\overline\gamma\theta_0|} \Delta_0 \end{array}
\right) T_+^\alpha f \,,
\end{align}
where we let $T^\alpha_+f$ denote the non-tangential boundary values of $Rf\mu_\alpha(z)$, $z\in\D$.

But the above formula  is not yet the formula we are looking for! To get it we applied Theorem \ref{t-repr-SNF} with $\mu_\alpha$ instead of $\mu$ and $\theta^\alpha_{\gamma/\alpha} = \overline\alpha\theta_\gamma$ instead of $\theta_\gamma$. But that means that the result in the right hand side there
 belongs to $\cK_{\overline\alpha \theta}$. So the the above expression is an absolutely correct formula giving the representation of the operator $\Phi^*_{\alpha,\gamma}$ in the model space $\cK_{\overline\alpha\theta_\gamma}$; that is why we used $\wt \Phi^*_{\alpha,\gamma}$ and not $\Phi^*_{\alpha,\gamma}$ there.

To get the representation with the model space $\cK_{\theta_\gamma}$ we notice that the map
\[
\left( \begin{array}{c} g_1\\ g_2 \end{array} \right) 
\mapsto 
\left( \begin{array}{c} g_1\\ \overline\alpha g_2 \end{array} \right)
\]
is a unitary map from $\cK_{\overline\alpha\theta_\gamma}$ onto $\cK_{\theta_\gamma}$. Moreover, it maps the defect vector $c$ given by equation \eqref{c} for the space $\cK_{\overline\alpha\theta_\gamma}$ to the corresponding defect vector $c$ for the space $\cK_{\theta_\gamma}$. Therefore, to obtain the representation formula for $\Phi_{\alpha,\gamma}^*$ we need to multiply the bottom entries in \eqref{AltRepr-alpha} by $\overline\alpha$, which gives us 


\begin{theo}
Operator $\Phi_{\alpha, \gamma}^*$ can be represented in the Sz.-Nagy--Foia\c{s} transcription as
\begin{align*}
 (1-|\gamma|^2)^{1/2}
 \Phi_{\alpha,\gamma}^* f
 & = 
\left( \begin{array}{c} 0 \\  (\overline\gamma - (\overline\gamma - \overline\alpha) T^\alpha_+ \1 )\Delta_\gamma \end{array}
\right) f 
+
\left( \begin{array}{c} (1 + \overline\gamma \theta_\gamma) / T^\alpha_+1 \\  (\overline\gamma -\overline\alpha) \Delta_\gamma \end{array}
\right) T^\alpha_+ f 
\\
\notag
 & = 
\left( \begin{array}{c} 0 \\  \overline\alpha \frac{1-\overline\gamma\theta_0}{|1-\overline\gamma\theta_0|}   T_+^\alpha \1 \cdot\Delta_0 \end{array}
\right) f 
+
\left( \begin{array}{c} \frac{1-|\gamma|^2}{1 - \overline\gamma \theta_0} \cdot\frac1{ T^\alpha_+\1} \\  (\overline\gamma  - \overline\alpha ) \frac{(1-|\gamma|^2)^{1/2}}{|1-\overline\gamma\theta_0|} \Delta_0 \end{array}
\right) T_+^\alpha f \,. 
\end{align*}
\end{theo}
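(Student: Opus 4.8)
The plan is to deduce the formula from Theorem~\ref{t-repr-SNF} (the case $\alpha=1$) by transporting it through the spectral representation of $U_\alpha$, along the lines already indicated in the text preceding the statement. First I would use the algebraic identity
\[
U_\gamma = U + (\gamma-1)bb_1^* = U_\alpha + (\gamma/\alpha - 1)\,b\,\wt b_1^*, \qquad
\wt b_1 = \overline\alpha b_1 = U_\alpha^* b,
\]
which exhibits $U_\gamma$, $|\gamma|<1$, as a rank one perturbation of the \emph{unitary} operator $U_\alpha$ by the $*$-cyclic vector $b$. Applying the unitary $\cV_\alpha\colon L^2(\mu)\to L^2(\mu_\alpha)$ from Section~\ref{s:UnitRk1}, which intertwines $U_\alpha$ with the multiplication by $z$ in $L^2(\mu_\alpha)$, one sees that in the spectral representation of $U_\alpha$ the operator $U_\gamma$ is given by \eqref{U_alpha_gamma_01} with $b^\alpha = \cV_\alpha b\equiv\1$ and $b_1^\alpha = \cV_\alpha\wt b_1 = \cV_\alpha U_\alpha^* b$, so $b_1^\alpha(z)\equiv\overline z$.

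Next I would fix the normalization by demanding the compatibility \eqref{Phi-V}, $\Phi_{\alpha,\gamma}^* = \Phi_\gamma^*\cV_\alpha^*$; this determines $\Phi_{\alpha,\gamma}^*$ uniquely once the unimodular constant for $\Phi_\gamma^*$ is chosen, and, as computed in the text, forces $\Phi_{\alpha,\gamma}^* b^\alpha = c^\gamma$ and $\Phi_{\alpha,\gamma}^* b_1^\alpha = \overline\alpha c_1^\gamma$. Plugging the data $(\mu_\alpha,\gamma/\alpha)$ into \eqref{e-CHARgamma} and \eqref{d-Delta} gives the characteristic function and the defect of \eqref{U_alpha_gamma_01} relative to the basis vectors $b^\alpha,b_1^\alpha$, namely $\theta^\alpha_{\gamma/\alpha} = \overline\alpha\,\theta_\gamma$ and $\Delta^\alpha_{\gamma/\alpha} = \Delta_\gamma$ (the defect depends only on $|\theta|$). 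Substituting $\mu\to\mu_\alpha$, $\gamma\to\gamma/\alpha$, $\theta_\gamma\to\overline\alpha\theta_\gamma$ into Theorem~\ref{t-repr-SNF} (equivalently into \eqref{e-AB}) then produces precisely the intermediate formula \eqref{AltRepr-alpha} for the adjoint Clark operator $\wt\Phi^*_{\alpha,\gamma}\colon L^2(\mu_\alpha)\to\cK_{\overline\alpha\theta_\gamma}$, where $T_+^\alpha f$ denotes the non-tangential boundary values of $Rf\mu_\alpha$ in $\D$; here one uses the elementary identities $\overline{(\gamma/\alpha)}\cdot\overline\alpha\theta_\gamma = \overline\gamma\theta_\gamma$ and $\theta_0^{\mu_\alpha} = \overline\alpha\theta_0$, so that the entries written in terms of $\theta_\gamma$ and $\theta_0$ survive the substitution essentially unchanged.

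Finally I would transport the target model space from $\cK_{\overline\alpha\theta_\gamma}$ to $\cK_{\theta_\gamma}$. Since $\Delta_{\overline\alpha\theta_\gamma}=\Delta_{\theta_\gamma}$, the map $(g_1,g_2)\mapsto(g_1,\overline\alpha g_2)$ is a unitary operator from $\cK_{\overline\alpha\theta_\gamma}$ onto $\cK_{\theta_\gamma}$: by \eqref{K_theta} it preserves the ambient space $\begin{pmatrix}H^2\\\clos\Delta L^2\end{pmatrix}$ and carries $\begin{pmatrix}\overline\alpha\theta_\gamma\\\Delta\end{pmatrix}H^2$ onto $\begin{pmatrix}\theta_\gamma\\\Delta\end{pmatrix}H^2$. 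A short computation with \eqref{c} and \eqref{c_1} shows that it sends the defect vectors $c^{\overline\alpha\theta_\gamma}$, $c_1^{\overline\alpha\theta_\gamma}$ to $c^\gamma$, $\overline\alpha c_1^\gamma$, which are exactly the vectors prescribed by \eqref{Phi-V}; hence composing \eqref{AltRepr-alpha} with this unitary yields $\Phi_{\alpha,\gamma}^*$. Concretely, this amounts to multiplying the bottom components of \eqref{AltRepr-alpha} by $\overline\alpha$, which turns $\overline\gamma/\overline\alpha$ into $\overline\gamma$ and $\overline\gamma/\overline\alpha - 1$ into $\overline\gamma-\overline\alpha$, giving exactly the asserted expression. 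The only genuinely delicate point in the whole argument is the bookkeeping in the previous paragraph: one must keep track that Theorem~\ref{t-repr-SNF} is being invoked with its \emph{own} canonical defect vectors in the model space $\cK_{\overline\alpha\theta_\gamma}$, and that the substitutions $\gamma\to\gamma/\alpha$ and $\theta_\gamma\to\overline\alpha\theta_\gamma$ are carried out consistently; once the two model spaces are correctly identified, the rest is the routine rescaling just described.
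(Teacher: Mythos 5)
Your proposal is correct and follows essentially the same route as the paper: writing $U_\gamma$ as a rank one perturbation of $U_\alpha$, fixing the normalization via $\Phi_{\alpha,\gamma}^*=\Phi_\gamma^*\cV_\alpha^*$, substituting $(\mu_\alpha,\gamma/\alpha,\overline\alpha\theta_\gamma)$ into Theorem~\ref{t-repr-SNF} to obtain \eqref{AltRepr-alpha}, and then transporting from $\cK_{\overline\alpha\theta_\gamma}$ to $\cK_{\theta_\gamma}$ by multiplying the bottom components by $\overline\alpha$. Your verification that the unitary $(g_1,g_2)\mapsto(g_1,\overline\alpha g_2)$ carries the canonical defect vectors to $c^\gamma$ and $\overline\alpha c_1^\gamma$ is exactly the bookkeeping the paper relies on.
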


\section{Few remarks about Clark theory for the dissipative case}\label{s-SAmodel}
Consider a family of rank one perturbations similar to Section \ref{s-SA}, but with perturbation parameter $\alpha\in \C_+ := \{z\in \C:\im z>0\}$. In other words, in the spectral representation of $A$ (with respect to the cyclic vector $\f$ and spectral measure $\mu = \mu^\f$) we study the family of perturbations given by
\begin{align*}
A_\alpha = M_t + \alpha (\fdot, \ID)\ci{L^2(\mu)} \ID
\qquad\text{on }
L^2(\mu)
\text{ with }
\alpha\in \C_+.
\end{align*}
Recall that we consider the extended class of form bounded rank one perturbations. In the spectral representation this condition is equivalent to $$\int\ci\R\frac{d\mu(t)}{1+|t|}<\infty.$$

Without going into details on the definition of the perturbation in this case, we just say that one of the ways is to use the resolvent formula \eqref{singres}.

While there is no ``canonical'' model for the dissipative operator, a widely accepted way is  to construct the model for the Cayley transform $\wt T_\alpha=(A_\alpha-i\I)(A_\alpha + i\I)^{-1}$. 

So, let us compute $\wt T_\alpha$, introducing some notation along the way. 

Denote by $\wt U$ the Cayley transform of $A=A_0$, $\wt U= (A-i\I)(A+i\I)^{-1}$.  Using the resolvent formula \eqref{singres} and denoting 
\[
\wt b := \|(A+i\I)^{-1} \f\|^{-1} (A+i\I)^{-1} \f, \qquad
\wt b_1 := \|(A-i\I)^{-1} \f\|^{-1} (A-i\I)^{-1}
\]
we can write 
\[
T_\alpha = \wt U_\gamma=\wt U + (\gamma-1)\wt b (\wt b_1)^*, 
\]
where 
\begin{align}
\label{gamma(alpha)}
\gamma = \gamma(\alpha) = \frac{1+\alpha\overline Q}{1+\alpha  Q} , \qquad Q= ((A+i\I)^{-1} \f, \f) = \int_\R \frac{d\mu(s)}{s+i} . 
\end{align}
If we denote 
\[
F(z):= \int_\R \frac{d\mu(s)}{s-z},  
\]
we get that $Q=F(-i)=\overline{F(i)}$.

Note also that $\|\wt b\| =\|\wt b_1\|=1$ and $b_1= \wt U^* b$. It is obvious that $\gamma(\alpha)\in \T$ for $\alpha\in\R$.  Since $\im Q<0$, we conclude that  $\gamma(\alpha)\in\D$ for $\im \alpha>0$.  Thus $T_\alpha$ is a contractive rank one perturbation of the unitary operator $\wt U$. Under our assumptions about cyclicity of $\f$, one can easily see that $\wt b$ is a $*$-cyclic vector for $\wt U$, so $\wt U $ is unitarily equivalent to the multiplication $U=M_\xi$ by the independent variable $\xi$ in $L^2(\mu\ci \T)$, where  $\mu\ci \T$ is the spectral measure of $\wt U$ corresponding to the vector $\wt b$. 

Let us fix some notation: for $\gamma=\gamma(\alpha)$ given by \eqref{gamma(alpha)} we denote $\wt U_\gamma = \wt T_\alpha$, and by $U_\gamma=T_\alpha$ we denote the representation of the same operator in $L^2(\mu\ci\T)$. In other words, we use $T$ in conjunction with the parameter $\alpha\in\C_+$ and $U$ in conjunction with the parameter $\gamma=\gamma(\alpha)\in\D$; also we use $\widetilde{\,\,\,\,}$ for the operators in $L^2(\mu)$, and $T$ and $U$ act in $L^2(\mu\ci \T)$. 

The spectral measure $\mu\ci\T$ of $\wt U$ is easily computed. Namely, if $\omega$ denotes the standard conformal map from $\C_+$ to $\D$ (and from $\R$ to $\T$), 
\begin{align*}
\omega(z):= \frac{z-i}{z+i}, \qquad \omega^{-1}(\xi) = i\frac{1+\xi}{1-\xi}\,,
\end{align*}
then one can easily see that 
\begin{align*}
\mu\ci\T: = \wt \mu \circ\omega^{-1}, \qquad \text{where }  d\wt \mu(x) =\frac{1}{P}\cdot \frac{d\mu(x)}{1+x^2};  
\end{align*}
here by $\wt \mu\circ \omega^{-1}$ we mean that  $\wt\mu\circ \omega^{-1}(E) = \wt\mu(\omega^{-1}(E))$, $E\subset \T$, and 
$
P:= \int\ci\R\frac{d\mu(t)}{1+t^2}
$.

%
%
\subsection{What is the model for the dissipative case?}

As we mentioned above, it is customary for dissipative operator to consider for the model the model for its Cayley transform. Using formulas  \eqref{e-CHARgamma} with $\mu\ci \T$ for $\mu$, and the above description of $\mu\ci \T$  we can write a the characteristic function $\theta_\gamma$, $\gamma=\gamma(\alpha)$. 

However, since our original objects live on the real line (in $L^2(\mu)$), it is natural to consider the model also to be a space of functions on the real line. The standard unitary mapping $\Omega: L^2(\T)\to L^2(\R)$, 
\[
\Omega f(x) : = \frac{1}{\sqrt\pi (x+i)} f\circ \omega(x)  
\]
maps $H^2(\D)$ onto $H^2(\C_+)$ and so $H^2_-(\D)$ onto $H^2_-(\C_+)$. So if we use $\Omega^{-1}$ to transfer the model space $\cK_\theta$ to the space of functions on $\R$, the model space on the real line in Sz.-Nagy--Foia\c s and the de Branges--Rovnyak transcriptions will be defined exactly the same way as the model space on the circle. 

The multiplication $M_\omega$ by the function $\omega$ on $\R$ corresponds to the multiplication  by $\xi$ on $\T$. 

Note also that $(\pi/P)^{1/2}\Omega$ is a unitary operator $L^2(\mu\ci \T)\to L^2(\mu)$ and that the map $f\mapsto f\circ \omega$ maps $L^2(\mu\ci\T)\to L^2(\wt\mu)$ unitarily.

\subsubsection{Characteristic function in the half-plane}
Let us now compute the characteristic function for $A_\alpha$. For $\gamma=\gamma(\alpha)$ defined by \eqref{gamma(alpha)} let $\theta_\gamma$ be the characteristic function of $T_\alpha$ computed with respect to the vectors $\wt b_1$ and $\wt b$. Let
\[
\Theta_\alpha=\wt \theta_\gamma := \theta_\gamma\circ \omega
\]
be the transfer of $\theta_\gamma$ from the disc to the half-plane; note that we use capital $\Theta$ in conjunction with the parameter $\alpha\in\C_+$. 

Let us now transfer the Cauchy type integrals \eqref{CauchyTrans} to $\C_+$. For $w\in \C_+$ let $\lambda=\omega(w)$. Then 
\begin{lem}
We have
\begin{align}\label{e-TransR}
R \mu\ci{\T} (\la)
&=
 \int_\T \frac{d\mu\ci{\T}(\xi)}{1-\overline\xi\la}
&=
\frac{1}{2iP} 
\int_\R
\left[\frac1{x-w}
-
\frac{1}{x+i}
\right]
d\mu\ci{\R}(x) & =:\wt R \mu(w),\\
\label{e-TransR1}
R_1 \mu\ci{\T} (\la)
&=
 \int_\T \frac{\overline\xi\la d\mu\ci\T(\xi)}{1-\overline\xi\la}
 &=
\frac{1}{2iP} 
\int_\R
\left[\frac1{x-w}
-
\frac{1}{x-i}
\right]
d\mu\ci{\R}(x) &=: \wt R_1\mu (w),\\
\label{e-TransR2}
R_2 \mu\ci{\T} (\la)
&=
 \int_\T \frac{1+\overline\xi\la }{1-\overline\xi\la}d\mu\ci{\T}(\xi)
&=
\frac{1}{iP}
\int_\R
\left[\frac1{x-w}
-
\frac{1}{x^2+1}
\right]
d\mu\ci{\R}(x) &=:\wt R_2\mu(w).
\end{align}
\end{lem}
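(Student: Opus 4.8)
The plan is to prove all three identities at once by a single change of variables through the Cayley transform $\omega$, after which each reduces to a one-line partial fraction identity.

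First I would record the pull-back formula implied by the definition $\mu\ci\T=\wt\mu\circ\omega^{-1}$, $d\wt\mu(x)=P^{-1}(1+x^2)^{-1}d\mu\ci\R(x)$: for every bounded Borel $f$ on $\T$,
\[
\int_\T f(\xi)\,d\mu\ci\T(\xi)=\frac1P\int_\R f(\omega(x))\,\frac{d\mu\ci\R(x)}{1+x^2}.
\]
Applying this to the three kernels $f(\xi)=(1-\overline\xi\la)^{-1}$, $\overline\xi\la(1-\overline\xi\la)^{-1}$, and $(1+\overline\xi\la)(1-\overline\xi\la)^{-1}$ with $\la=\omega(w)$, the task becomes to rewrite $f(\omega(x))/(1+x^2)$ as an explicit rational function of $x$.

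The crux is the elementary identity: since $|\omega(x)|=1$ for $x\in\R$ we have $\overline{\omega(x)}=(x+i)/(x-i)$ and $\omega(w)=(w-i)/(w+i)$, whence a short computation of the numerators gives
\[
1-\overline{\omega(x)}\,\omega(w)=\frac{2i(x-w)}{(x-i)(w+i)},\qquad
1+\overline{\omega(x)}\,\omega(w)=\frac{2(xw+1)}{(x-i)(w+i)},
\]
and $\overline{\omega(x)}\,\omega(w)=(x+i)(w-i)/[(x-i)(w+i)]$. Dividing the appropriate pair and multiplying by $(1+x^2)^{-1}=[(x-i)(x+i)]^{-1}$, the common factors $(x-i)$ and $(w+i)$ cancel and one is left with a rational function whose partial fraction expansion is exactly the bracket on the right of \eqref{e-TransR}, \eqref{e-TransR1} or \eqref{e-TransR2}; e.g.\ for the first kernel,
\[
\frac1{\bigl(1-\overline{\omega(x)}\,\omega(w)\bigr)(1+x^2)}
=\frac{w+i}{2i\,(x-w)(x+i)}
=\frac1{2i}\Bigl[\frac1{x-w}-\frac1{x+i}\Bigr],
\]
and the $R_1$ and $R_2$ cases are the same computation with $(w-i)$, respectively $(xw+1)$, in the numerator. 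Multiplying through by $1/P$ then produces the stated formulas and identifies $\wt R\mu(w)$, $\wt R_1\mu(w)$, $\wt R_2\mu(w)$.

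The remaining point is convergence. Because $w\in\C_+$, the term $(x-w)^{-1}$ is bounded on $\R$ and $O(|x|^{-1})$ at infinity, and the auxiliary terms $(x\pm i)^{-1}$ and $x(1+x^2)^{-1}$ are likewise $O(|x|^{-1})$, so the standing hypothesis $\int_\R(1+|t|)^{-1}d\mu\ci\R(t)<\infty$ (equivalently $P<\infty$ together with form-boundedness of the perturbation) makes each of the two terms in every bracket separately $\mu\ci\R$-integrable, which legitimizes the splitting of the integral. I expect this bookkeeping to be the only mildly delicate step: the kernels $f(\omega(x))/(1+x^2)$ are bounded, so the left-hand sides converge trivially, and it is precisely the separation into the displayed difference of two integrals that needs the decay of $\mu\ci\R$ at infinity. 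Everything else is a routine algebraic verification.
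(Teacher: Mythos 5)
Your approach --- pull the integral back to $\R$ via $\mu\ci\T=\wt\mu\circ\omega^{-1}$, compute $1-\overline{\omega(x)}\omega(w)$ and its companions explicitly, cancel against $(1+x^2)^{-1}=[(x-i)(x+i)]^{-1}$, and finish with partial fractions --- is the natural (and surely intended) one; the paper states this lemma without proof, so there is nothing to compare against. Your identities $1-\overline{\omega(x)}\omega(w)=2i(x-w)/[(x-i)(w+i)]$ and $1+\overline{\omega(x)}\omega(w)=2(xw+1)/[(x-i)(w+i)]$ are correct, and the computations for \eqref{e-TransR} and \eqref{e-TransR1} check out exactly as you describe. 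Your remarks on convergence are also fine (in fact each bracket, being a constant multiple of a bounded kernel times $(1+x^2)^{-1}$, is $O(x^{-2})$ and integrable under $P<\infty$ alone; the stronger hypothesis $\int(1+|t|)^{-1}d\mu<\infty$ is only needed if one insists on splitting the bracket into two separate integrals).

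The one misstep is your assertion that the $R_2$ computation lands on the bracket displayed in \eqref{e-TransR2}. It does not: carrying out the partial fractions for
\[
\frac{xw+1}{i\,(x-w)(x^2+1)}
\]
gives residues $1$ at $x=w$ and $-\tfrac12$ at each of $x=\pm i$, hence
\[
\frac1{i}\Bigl[\frac1{x-w}-\frac{x}{x^2+1}\Bigr],
\]
with $x/(x^2+1)$ rather than $1/(x^2+1)$. That this is the correct answer (and that \eqref{e-TransR2} as printed contains a typo) is confirmed independently by $R_2\tau=2R\tau-\tau(\T)$ together with $\mu\ci\T(\T)=1$ and the already-verified \eqref{e-TransR}: one gets $\frac1{x-w}-\frac1{x+i}-\frac{i}{x^2+1}=\frac1{x-w}-\frac{x}{x^2+1}$. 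So your method is sound, but you should not claim the third bracket matches as stated; either derive the corrected bracket or flag the discrepancy explicitly, since a reader who trusts your sentence "the partial fraction expansion is exactly the bracket on the right" would be misled for \eqref{e-TransR2}.
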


%
%

Using formulas \eqref{e-CHARgamma} for the disc we can write the characteristic functions as 
\begin{align}\label{e-CHARgamma-01}
\wt\theta_\gamma(\la) =  -\gamma + \frac{(1-|\gamma|^2) \wt R_1 \mu(\la)}{1 + (1-\overline\gamma) \wt R_1 \mu(\la)} = 
\frac{(1-\gamma) \wt R_2\mu(\la) -(1+\gamma)}{(1-\overline\gamma )\wt R_2\mu(\la)+(1+\overline\gamma)}\,, \qquad \la\in \C_+.
\end{align}
Note that the formulas for $\wt\theta_0$ ($\gamma=0$, equivalently $\alpha=-1/\overline Q = -1/F(i)$) are especially simple. 
And $\wt\te_0$ is related to $\wt\te_\gamma$ by a fractional transformation:
\begin{align*}
\wt\theta_\gamma = \frac{\wt\theta_0-\gamma}{1-\overline\gamma\wt\theta_0} \qquad
\textup{or equivalently } \qquad \wt\theta_0 = \frac{\wt\theta_\gamma + \gamma}{1 + \overline\gamma\wt\theta_\gamma} \,.
\end{align*}

\subsubsection{Model and defect vectors  in the half-plane}
Recall that the model operator $\cM_{\wt\theta_\gamma}$ is the compression of the multiplication operator $M_\omega$ by the function $\omega$, 
\[
 \cM_{\wt\theta_\gamma} f:=  P\ci{\cK_{\wt \theta_\gamma}} M_\omega f, \qquad f\in\cK_{\wt\theta_\gamma}. 
\]

Let us compute defect subspaces of $\cM_{\wt\theta_\gamma}$. 

Considering vectors $c=c^\gamma$ and $c_1=c_1^\gamma$ defined by \eqref{c}, \eqref{c_1} with $\mu\ci \T$ instead of $\mu$, define $\wt c:= c\circ \omega$,  $\wt c_1:= c_1\circ \omega$, $\wt\Delta :=\Delta\circ\omega =1-|\wt\theta_0|^2$. Computing we get in the Sz.-Nagy--Foia\c s transcription
\begin{align*}
\wt c(z) &:= \left( 1- |\wt\theta_\gamma(i)|^2 \right)^{-1/2} 
\left( \begin{array}{c} 1-\overline{\wt\theta_\gamma(i)}\wt\theta_\gamma (z)\\ -\overline{\wt\theta_\gamma(i)}\wt\Delta (z) \end{array} \right), 
\\
\wt c_1(z) & := \left( 1- |\wt\theta_\gamma(i)|^2 \right)^{-1/2} 
\left( \begin{array}{c} \omega(z)^{-1} \left(\wt\theta_\gamma(z)- \wt\theta_\gamma(i)\right) \\ \omega(z)^{-1} \wt\Delta (z)\end{array} \right).
\end{align*}
Then the defect subspaces $\fD\ci{\cM_{\wt\theta_\gamma}^*}$ and $\fD\ci{\cM_{\wt\theta_\gamma}}$ of $\cM_{\wt\theta_\gamma}$ are spanned by the vectors
\[
\frac{\wt c(z)}{\sqrt\pi(z+i)}, \qquad \frac{\wt c_1(z)}{\sqrt\pi(z+i)}
\]
and these vectors agree. 

\subsection{Representations of the adjoint Clark operator in the half-plane} 
Using these formulas we can transfer the universal representation formula given by Theorem \ref{t-reprB} from the unit circle $\T$ to the real line $\R$. For a function $f$ on the real line $\R$ define $\wt f$ by \[
\wt f(x):= (x+i)\cdot f(x). 
\] 
and let $f\ci\T := \wt f\circ \omega^{-1}$. Then we can easily transfer Theorem \ref{t-reprB} from the disc $\D$ to the  half-plane $\C_+$.

To simplify the notation let us assume that the measure $\mu$ is Poisson normalized, i.e.~that 
\[
P:= \int_\R \frac{d\mu(x)}{x^2+1} =1. 
\]
Formulas for the general case $P\ne 1$ can be then obtained if one notice that the map $f\mapsto P^{1/2}f$ is a unitary map  $L^2(\mu) \to  L^2(\mu/P)$. 
\subsubsection{A universal representation formula}
\begin{theo}[A ``universal" representation formula for dissipative perturbations]
\label{t-reprB-01}
Let the measure $\mu$ be Poisson normalized ($P=1$). 
Let $\wt\theta_\gamma$  be the characteristic function  of $T_\alpha=\wt U_\gamma$,  $|\gamma|<1$,   computed with respect to the vectors $\wt b_1$ and $\wt b$  (note that $\wt\theta_\gamma$ is given by \eqref{e-CHARgamma-01}). Let $\cK_{\wt\theta_\gamma}$ and $\cM_\gamma =\cM_{\wt\theta_\gamma}$ be the model subspace and the model operator respectively. 
Let $\wt\Phi^*_\gamma: L^2(\mu) \to \cK_{\wt\theta_\gamma}$ be the unitary operator satisfying 
\[
\wt\Phi_\gamma^* \wt U_\gamma = \cM_{\wt\theta_\gamma} \wt\Phi_\gamma^* , 
\]
and such that $\wt\Phi_\gamma^* \wt b (z)= \wt c^\gamma (z)/(\sqrt\pi(z+i))$, $\wt\Phi_\gamma^* \wt b_1 (z) = \wt c_1^\gamma(z)/(\sqrt\pi(z+i))$. 

Then for all compactly supported $f\in C^1(\R)$
\begin{align*}
\sqrt\pi (z+i)
\wt\Phi_\gamma^* f(z) &= 
\\
\notag
=\wt A_\gamma(z) \wt f(z) &+
\wt B_\gamma(z)\int \left(\wt f(s)- \wt f(z)  \right)\frac{1}{2i} \left[   \frac{1}{s-z}-\frac{1}{s+i}\right]\, d\mu(s)
\end{align*}
where $\wt A_\gamma(z) = \wt c^\gamma(z)$, $\wt B_\gamma(z) = \wt c^\gamma(z) - \omega(z) \wt c_1^\gamma(z)$.
\end{theo}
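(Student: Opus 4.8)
The plan is to \emph{transfer} the universal representation formula of Theorem~\ref{t-reprB} from the circle $\T$ to the real line $\R$ by means of the conformal map $\omega$ and the associated unitaries. Recall $\Omega g(x)=\frac{1}{\sqrt\pi(x+i)}\,g(\omega(x))$ is unitary $L^2(\T)\to L^2(\R)$, that it maps $H^2(\D)$ onto $H^2(\C_+)$ and $H^2_-(\D)$ onto $H^2_-(\C_+)$, and that $W:=(\pi/P)^{1/2}\Omega=\sqrt\pi\,\Omega$ (using $P=1$) is unitary $L^2(\mu\ci\T)\to L^2(\mu)$. First I would record two structural facts. (i) Acting entrywise on $\C^2$-valued functions, $\Omega$ satisfies $\Omega(\psi g)=(\psi\circ\omega)\,\Omega g$ for every multiplier $\psi$; hence it carries $\begin{pmatrix}H^2\\ \clos\Delta L^2\end{pmatrix}$ onto $\begin{pmatrix}H^2(\C_+)\\ \clos\wt\Delta L^2\end{pmatrix}$ and $\begin{pmatrix}\theta_\gamma\\ \Delta\end{pmatrix}H^2$ onto $\begin{pmatrix}\wt\theta_\gamma\\ \wt\Delta\end{pmatrix}H^2(\C_+)$, so it restricts to a unitary map $\cK_{\theta_\gamma}\to\cK_{\wt\theta_\gamma}$; moreover $M_\omega\Omega=\Omega M_\xi$, so $\Omega$ intertwines $\cM_{\theta_\gamma}$ with $\cM_{\wt\theta_\gamma}$. (ii) $\wt b=Wb$ and $\wt b_1=Wb_1$, where $b\equiv\1$ and $b_1(\xi)\equiv\bar\xi$ in $L^2(\mu\ci\T)$; this is an immediate computation using $\overline{\omega(x)}=(x+i)/(x-i)$ for real $x$.

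Next I would \emph{define} $\wt\Phi_\gamma^*:=\Omega\,\Phi_\gamma^*\,W^{-1}$, where $\Phi_\gamma^*\colon L^2(\mu\ci\T)\to\cK_{\theta_\gamma}$ is the adjoint Clark operator of Theorem~\ref{t-reprB} applied with the measure $\mu\ci\T$. Combining (i), (ii) with $\wt U_\gamma=WU_\gamma W^{-1}$ and $\cM_{\wt\theta_\gamma}=\Omega\cM_{\theta_\gamma}\Omega^{-1}$ gives at once that $\wt\Phi_\gamma^*$ is unitary, that $\wt\Phi_\gamma^*\wt U_\gamma=\cM_{\wt\theta_\gamma}\wt\Phi_\gamma^*$, and, using $\Phi_\gamma^*b=c^\gamma$, $\Phi_\gamma^*b_1=c_1^\gamma$ and $\Omega c^\gamma(z)=\wt c^\gamma(z)/(\sqrt\pi(z+i))$, that $\wt\Phi_\gamma^*\wt b(z)=\wt c^\gamma(z)/(\sqrt\pi(z+i))$ and $\wt\Phi_\gamma^*\wt b_1(z)=\wt c_1^\gamma(z)/(\sqrt\pi(z+i))$. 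Since the defect vectors $\wt c^\gamma$, $\wt c_1^\gamma$ agree (as observed in the construction of the defect vectors in the half-plane above), $\wt\Phi_\gamma^*$ is precisely the operator in the statement.

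It then remains to compute $\wt\Phi_\gamma^* f$ for compactly supported $f\in C^1(\R)$. For such $f$ the function $f\ci\T:=\wt f\circ\omega^{-1}$, where $\wt f(x)=(x+i)f(x)$, is compactly supported away from $\xi=1$ and lies in $C^1(\T)$, and one checks $W^{-1}f=f\ci\T$. Applying Theorem~\ref{t-reprB} to $f\ci\T$, then $\Omega$, and using $A_\gamma(\omega(z))=\wt c^\gamma(z)=\wt A_\gamma(z)$, $B_\gamma(\omega(z))=\wt c^\gamma(z)-\omega(z)\wt c_1^\gamma(z)=\wt B_\gamma(z)$ and $f\ci\T(\omega(z))=\wt f(z)$, one obtains
\[
\sqrt\pi(z+i)\,\wt\Phi_\gamma^* f(z)
=\wt A_\gamma(z)\wt f(z)
+\wt B_\gamma(z)\int_\T\frac{f\ci\T(\xi)-f\ci\T(\omega(z))}{1-\bar\xi\,\omega(z)}\,d\mu\ci\T(\xi).
\]
In the integral I would substitute $\xi=\omega(s)$, so that $d\mu\ci\T(\xi)=d\wt\mu(s)=d\mu(s)/(1+s^2)$, $f\ci\T(\omega(s))=\wt f(s)$, and use the elementary identity
\[
1-\overline{\omega(s)}\,\omega(z)=\frac{2i(s-z)}{(s-i)(z+i)}
\]
(the same computation underlying \eqref{e-TransR}--\eqref{e-TransR2}), which turns the kernel $\dfrac{1}{\bigl(1-\overline{\omega(s)}\omega(z)\bigr)(1+s^2)}$ into $\dfrac{z+i}{2i(s-z)(s+i)}=\dfrac{1}{2i}\Bigl[\dfrac{1}{s-z}-\dfrac{1}{s+i}\Bigr]$, yielding exactly the asserted formula.

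The main obstacle is fact (i): verifying carefully that the entrywise action of $\Omega$ respects the entire model-space structure (the Hardy classes, the defect $\wt\Delta$, and the orthogonal complement defining $\cK_{\theta_\gamma}$) and intertwines the two model operators. Much of this, however, is already contained in the preceding discussion of the model and defect vectors in the half-plane, so what genuinely remains is bookkeeping of the conformal substitution together with the $(x+i)$ and $\sqrt\pi$ normalizing factors.
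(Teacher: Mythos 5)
Your proposal is correct and follows exactly the route the paper intends: the paper offers no detailed proof here, merely stating that one ``easily transfers'' Theorem~\ref{t-reprB} from $\D$ to $\C_+$ via $\omega$ and the unitaries $\Omega$, $(\pi/P)^{1/2}\Omega$, which is precisely what you carry out. Your bookkeeping (the identities $\wt b=Wb$, $\wt b_1=Wb_1$ via $\overline{\omega(s)}=(s+i)/(s-i)$, and $1-\overline{\omega(s)}\omega(z)=2i(s-z)/\bigl((s-i)(z+i)\bigr)$ combined with $d\mu\ci\T(\omega(s))=d\mu(s)/(1+s^2)$) checks out and yields the stated kernel.
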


\subsubsection{A representation formula in the Sz.-Nagy--\texorpdfstring{Foia\c s}{Foias} transcription}
For a measure $\mu$ on the real line define $T_+f$ to be the non-tangential boundary values of the function 
\[
\wt R f\mu(z) = \frac1{2iP}\int_\R f(s)  \left[   \frac{1}{s-z}-\frac{1}{s+i}\right]\, d\mu(s), \qquad \im z>0, 
\] 
and let $T^1_+ f$ be the non-tangential boundary values of
\[
\wt R^1 f\mu(z) := \frac{1}{2iP}\int_\R \frac{f(s) d\mu(s)}{s-z} , \qquad \im z>0;
\]
the non-tangential boundary values exist a.e.~with respect to the Lebesgue measure by classical result about boundary values of the functions in the Hardy spaces $H^p$. 

\begin{theo}\label{t-repr-SNF-01} Let $\mu$ be Poisson normalized, $P=1$. 
The operator $\wt\Phi_\gamma^*$ can be represented in the Sz.-Nagy--Foia\c{s} transcription as
\begin{align*}
\sqrt\pi(1-|\gamma|^2)^{1/2}\wt\Phi_\gamma^* f
 & = 
\left( \begin{array}{c} 0 \\  (\overline\gamma - (\overline\gamma - 1) T_+ \1 )\wt\Delta_\gamma \end{array}
\right) f 
+
\left( \begin{array}{c} (1 + \overline\gamma \wt\theta_\gamma) / T_+\1 \\  (\overline\gamma - 1) \wt\Delta_\gamma \end{array}
\right) T_+^1 f 
\\
\notag
 & 
=
\left( \begin{array}{c} 0 \\   \frac{1-\overline\gamma\wt\theta_0}{|1-\overline\gamma\wt\theta_0|}   T_+ \ID \cdot\wt\Delta_0 \end{array}
\right) f 
+
\left( \begin{array}{c} \frac{1-|\gamma|^2}{1 - \overline\gamma \wt\theta_0} \cdot\frac1{ T_+\ID} \\  (\overline\gamma - 1) \frac{(1-|\gamma|^2)^{1/2}}{|1-\overline\gamma\wt\theta_0|} \wt\Delta_0 \end{array}
\right) T_+^1 f 
\end{align*}
for $f\in L^2(\mu)$.
\end{theo}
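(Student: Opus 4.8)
The plan is to deduce this formula by transporting Theorem~\ref{t-repr-SNF} from the unit circle to the real line, rather than redoing on the half-plane the analysis of singular integral operators that underlies the circle case (the alternative being to start from the half-plane universal formula of Theorem~\ref{t-reprB-01} and mimic the proof of Theorem~\ref{t-repr-SNF}). Recall from the dissipative setup that $(\pi/P)^{1/2}\Omega$ is a unitary $L^2(\mu\ci\T)\to L^2(\mu)$, and that the entry-wise action of $\Omega$ is a unitary from $\cK_{\theta_\gamma}$ onto $\cK_{\wt\theta_\gamma}$ intertwining $M_z$ with $M_\omega$; with the Poisson normalization $P=1$ these maps reduce to $\Omega$ itself. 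Since $\wt U_\gamma$ and $\cM_{\wt\theta_\gamma}$ are precisely the conjugates under $\Omega$ of $U_\gamma$ (acting in $L^2(\mu\ci\T)$) and of $\cM_{\theta_\gamma}$, the operator
\[
\wt\Phi_\gamma^* := (\text{entry-wise }\Omega)\circ \Phi_\gamma^* \circ \pi^{-1/2}\Omega^{-1}
\]
is a unitary intertwining $\wt U_\gamma$ with $\cM_{\wt\theta_\gamma}$ which sends $\wt b$ and $\wt b_1$ to the normalized defect vectors $\wt c^\gamma(z)/(\sqrt\pi(z+i))$ and $\wt c_1^\gamma(z)/(\sqrt\pi(z+i))$; by uniqueness this is the operator of Theorem~\ref{t-reprB-01}. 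Here $\Phi_\gamma^*$ denotes the operator of Theorem~\ref{t-repr-SNF} applied with $\mu\ci\T$ in place of $\mu$.

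First I would note that the scalar ingredients transfer verbatim, since $\wt\theta_\gamma=\theta_\gamma\circ\omega$, $\wt\theta_0=\theta_0\circ\omega$ and $\wt\Delta_\gamma=\Delta_\gamma\circ\omega$. The substantive step is to track the Cauchy operators, using the change of variable $\xi=\omega(s)$ together with the identity $1-\overline{\omega(s)}\,\omega(w)=2i(s-w)/((s-i)(w+i))$ --- this is exactly the computation behind the Lemma with \eqref{e-TransR}--\eqref{e-TransR2}. A function $f\in L^2(\mu)$ on the line corresponds under $\pi^{-1/2}\Omega^{-1}$ to $g:=\wt f\circ\omega^{-1}\in L^2(\mu\ci\T)$, where $\wt f(x)=(x+i)f(x)$. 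Carrying the extra factor $(s+i)$ through the kernel, one finds that the circle Cauchy transform of $\1$, read along $\omega$, is precisely the half-plane $\wt R\mu$ --- so the circle operator $T_+$ applied to $\1$ becomes the half-plane $T_+$ applied to $\1$, with $T_+\1=1/(1-\wt\theta_0)$ --- while the circle Cauchy transform of $g$, read along $\omega$, equals $(w+i)$ times the half-plane $\wt R^1(f\mu)$, so that $T_+$ applied to $g$ becomes $(\,\cdot\,+i)\,T_+^1 f$. This is the mechanism by which the single circle operator $T_+$ splits into the two half-plane operators appearing in the statement: $T_+$, which survives only inside the coefficients (always applied to $\1$), and $T_+^1$, which carries $f$; the boundary factor $(w+i)$ cancels the $(z+i)$ produced by the entry-wise $\Omega$ on the model side.

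With these translations I would substitute into the formula of Theorem~\ref{t-repr-SNF}, carry through the overall $\sqrt\pi$ and the $(z+i)$ cancellation, and read off the first displayed expression; the passage to the second expression is then the same algebra as in the circle case, using $T_+\1=1/(1-\wt\theta_0)$ (the transfer of \eqref{e-TPlus}), the fractional relation $\wt\theta_\gamma=(\wt\theta_0-\gamma)/(1-\overline\gamma\wt\theta_0)$, and $\wt\Delta_\gamma=(1-|\gamma|^2)^{1/2}\wt\Delta_0/|1-\overline\gamma\wt\theta_0|$. The genuinely routine part is this $2\times2$ matrix arithmetic; the main obstacle is the careful bookkeeping of the normalizing factors $\sqrt\pi$, $(z+i)$ and $P$, and in particular making the $T_+$ versus $T_+^1$ split rigorous --- verifying that $g=\wt f\circ\omega^{-1}$ is indeed the $\Omega$-preimage of $f$ and that the boundary factor $(w+i)$ is absorbed consistently, so that the coefficients of $f$ and of $T_+^1f$ come out exactly as stated.
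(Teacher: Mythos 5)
Your proposal is correct and follows exactly the ``recipe'' the paper intends for Section \ref{s-SAmodel}: transferring the circle result (Theorem \ref{t-repr-SNF}) to the half-plane via the unitary $\Omega$ and the conformal map $\omega$, with the key computation being the identity $1-\overline{\omega(s)}\omega(w)=2i(s-w)/((s-i)(w+i))$ that produces the split of the circle operator $T_+$ into the half-plane $T_+$ (on $\1$, in the coefficients) and $T_+^1$ (on $f$), and the cancellation of the $(z+i)$ factors. The paper states this theorem without a written proof, but your bookkeeping of the normalizations ($\sqrt\pi$, $P=1$, $\wt f=(x+i)f$, the defect vectors $\wt c^\gamma/(\sqrt\pi(z+i))$) checks out and matches the setup of Theorem \ref{t-reprB-01}.
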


The same recipe as above gives the representation in the de Branges--Rovnyak transcription, and a formula for the Clark operator $\Phi_\gamma$.

\providecommand{\bysame}{\leavevmode\hbox to3em{\hrulefill}\thinspace}
\providecommand{\MR}{\relax\ifhmode\unskip\space\fi MR }
\providecommand{\MRhref}[2]{%
  \href{http://www.ams.org/mathscinet-getitem?mr=#1}{#2}
}
\providecommand{\href}[2]{#2}

\end{document}